\theoremstyle{plain}
\newtheorem{lemma}{Lemma}[section]
\newtheorem{corollary}[lemma]{Corollary}
\newtheorem{theorem}[lemma]{Theorem}
\theoremstyle{definition}
\theoremstyle{remark}
\newtheorem{remark}[lemma]{Remark}
\numberwithin{equation}{section}
\newcommand\numberthis{\addtocounter{equation}{1}\tag{\theequation}}
\newcommand{\bvec}[1]{\mathbf{#1}}
\newcommand{\RR}[0]{\mathbb{R}}
\newcommand{\ZZ}[0]{\mathbb{Z}}
\newcommand{\CC}[0]{\mathbb{C}}
\newcommand{\kronsym}[2]{\left( \dfrac{{#1}}{{#2}}\right)}
\newcommand{\e}[1]{\mathrm{e}\!\left({#1}\right)}
\newcommand{\gausssum}[2]{g \! \left({#1};{#2}\right)}
\newcommand{\Vol}[0]{\operatorname{Vol}}
\renewcommand{\pmod}[1]{\:\left(\operatorname{mod} {#1}\right)}
\begin{document}
\title[Local Densities of Diagonal Ternary Quadratic Forms]{Local Densities of Diagonal Integral Ternary Quadratic Forms at Odd Primes}
%\title{Local Densities of Diagonal Integral Ternary Quadratic Forms at Odd Primes}
\author{Edna Jones}
\address{Mathematics Department, Rutgers, The State University of New Jersey, 110 Frelinghuysen Road\\
Piscataway, NJ 08854-8019, USA\\
\url{elj44@math.rutgers.edu}}
%\address{Mathematics Department, Rutgers, The State University of New Jersey, 110 Frelinghuysen Road\\
%Piscataway, NJ 08854-8019, United States of America\\
%\email{elj44@math.rutgers.edu}}
\dedicatory{Dedicated to Bruce Berndt on the occasion of his 80th birthday}
\maketitle

\begin{abstract}
We give formulas for local densities of diagonal integral ternary quadratic forms at odd primes.  Exponential sums and quadratic Gauss sums are used to obtain these formulas. These formulas (along with 2-adic densities and Siegel's mass formula) can be used to compute the representation numbers of certain ternary quadratic forms.
\end{abstract}

%\keywords{Ternary quadratic forms; local densities; quadratic Gauss sums.}
%
%\ccode{Mathematics Subject Classification 2010: 11E08, 11E20, 11D85,11L05, 11T23}

\section{Introduction}

Mathematicians have been interested in sums of squares and quadratic forms for several centuries.
For example, Hilbert stated in his 11th problem, ``[S]olve a given quadratic equation with algebraic numerical coefficients in any number of variables by integral or fractional numbers belonging to the algebraic realm of rationality determined by the coefficients'' \cite[p.~458]{HilbertProbs}.

In this paper, we add to our understanding of quadratic forms by giving formulas for local densities of diagonal integral ternary quadratic forms at odd primes. By a diagonal integral ternary quadratic form, we mean a quadratic form of the form $a x^2 + b y^2 + c z^2$, where $a, b, c$ are integers. Let $Q$ be the diagonal integral ternary quadratic form $Q(\bvec{v}) = a x^2 + b y^2 + c z^2$, where $a$, $b$, and $c$ are integers and $\bvec{v} = (x,y,z)^\top$.

Let $m$ be an integer. A necessary condition for $Q(\bvec{v}) = m$ to have a solution in the integers is that the congruence 
\begin{align} \label{cong:Qv=m}
Q(\bvec{v}) = a x^2 + b y^2 + c z^2 \equiv m \pmod{n}
\end{align}
has a solution for every positive integer $n$. This leads us to count the number of solutions to \eqref{cong:Qv=m}. For a positive integer $n$, we define the \emph{local representation number $r_{n}(m, Q)$} as 
\begin{align*}
r_{n}(m, Q) = \# \left\{ \bvec{v} \in ( \ZZ / n \ZZ )^3 : Q(\bvec{v}) \equiv m \pmod{n} \right\}.
\end{align*}

The Chinese Remainder Theorem tells us that $r_{n_1 n_2}(m,Q) = r_{n_1}(m,Q) r_{n_2}(m,Q)$ whenever $n_1$ and $n_2$ are relatively prime, so it suffices to study the local representation numbers in which $n$ is a prime power.
For a prime $p$, we use a $p$-adic density to encode information about $r_{p^k}(m, Q)$ for all positive integers $k$. Given a prime $p$, let $\ZZ_p$ denote the set of $p$-adic integers with the usual Haar measure, and define the \emph{local (representation) density} $\alpha_p (m, Q)$ at the prime $p$ by 
\begin{align*}
\alpha_p (m, Q) = \lim_{U \to \{m\}} \frac{\Vol_{\ZZ_p^3} (Q^{-1}(U))}{\Vol_{\ZZ_p}(U)},
\end{align*}
where $U$ runs over a sequence of open sets in $\ZZ_p$ containing $m$, $\Vol_{\ZZ_p^3} (Q^{-1}(U))$ is the volume of $Q^{-1}(U)$ in $\ZZ_p^3$, and $\Vol_{\ZZ_p}(U)$ is the volume of $U$ in $\ZZ_p$. The local density $\alpha_p (m, Q)$ is also called a \emph{$p$-adic density}.
It can be shown that
\begin{align} \label{eq:repntodensity}
\alpha_p (m, Q) = \lim_{k \to \infty} \frac{r_{p^k}(m , Q)}{p^{2k}}.
\end{align}
(See \cite[p.~368]{HankeLocalDensities} and \cite[p.~37, Lemma~2.7.2]{HankeQuadForms}.)

As can be seen by \eqref{eq:repntodensity}, a $p$-adic density encodes some information about $r_{p^k}(m , Q)$ for all positive integers $k$. In fact, in some cases, local densities are almost enough to compute the number of integral solutions to $Q(\bvec{v}) = m$. Define the \emph{representation number $r(m, Q)$} by
\begin{align*}
r(m, Q) = \# \left\{ \bvec{v} \in \ZZ^3 : Q(\bvec{v}) = m \right\}.
\end{align*}
In 1935, Siegel~\cite{SiegelQuadForm} proved a formula (now often called Siegel's mass formula) that can be used to compute the weighted average of the representation numbers of certain positive definite quadratic forms. We state a specialized version of Siegel's mass formula here.
\begin{theorem}[Specialized Version of Siegel's Mass Formula] \label{thm:SiegelMassFormula}
Let $m$ be an integer and $Q$ be a positive definite quadratic form of rank $3$. Let $\{ Q_j \}$ be a complete set representatives for classes in the same genus as $Q$. Then
\begin{align}
\dfrac{\displaystyle{\sum_j} \dfrac{r(m, Q_j)}{\# O(Q_j)}}{\displaystyle{\sum_j} \dfrac{1}{\# O(Q_j)}} = \alpha_\RR(m, Q) \prod_{p \text{ prime}} \alpha_p (m, Q) , \label{eq:SiegelMassFormula}
\end{align}
where $O(Q_j)$ is the orthogonal group of $Q_j$ over $\ZZ$ and $\alpha_\RR(m, Q)$ is the local density at the archimedean place. More explicitly, $\displaystyle{\alpha_\RR(m, Q) = \lim_{U \to \{m\}} \frac{\Vol_{\RR^3} (Q^{-1}(U))}{\Vol_{\RR}(U)}}$, where $U$ runs over a sequence of open sets in $\RR$ containing $m$, $\Vol_{\RR^3} (Q^{-1}(U))$ is the volume of $Q^{-1}(U)$ in $\RR^3$, and $\Vol_{\RR}(U)$ is the volume of $U$ in $\RR$.
\end{theorem}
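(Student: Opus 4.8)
The plan is to deduce the stated mass formula from the Siegel--Weil formula by realizing both sides of \eqref{eq:SiegelMassFormula} as the $m$-th Fourier coefficient of a modular form of weight $3/2$. For each representative $Q_j$ in the genus, attach the theta series $\theta_{Q_j}(z) = \sum_{\bvec{v} \in \ZZ^3} \e{Q_j(\bvec{v}) z}$ on the upper half plane; since each $Q_j$ is positive definite of rank $3$, this is a holomorphic modular form of (half-integral) weight $3/2$ on some $\Gamma_0(N)$ with a quadratic character, and its $m$-th Fourier coefficient is $r(m, Q_j)$. The weighted genus average
\[
\theta_{\mathrm{gen}}(z) = \Bigl( \sum_j \tfrac{1}{\# O(Q_j)} \Bigr)^{-1} \sum_j \frac{\theta_{Q_j}(z)}{\# O(Q_j)}
\]
is then again a modular form of the same weight, level, and character, and the left-hand side of \eqref{eq:SiegelMassFormula} is exactly its $m$-th Fourier coefficient. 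On the other side, I would introduce the weight-$3/2$ holomorphic Eisenstein series $E(z)$ of the same level and character; unfolding its defining sum and separating it into local pieces expresses its $m$-th Fourier coefficient as an Euler product $\alpha_\RR(m, Q) \prod_p \alpha_p(m, Q)$, where the factor at each finite prime $p$ is a limit of normalized solution counts modulo $p^k$, i.e.\ precisely the local density $\alpha_p(m, Q)$ of \eqref{eq:repntodensity} (recall local densities are genus invariants, so they do not depend on which $Q_j$ we use), and the factor at $\infty$ is the archimedean density $\alpha_\RR(m, Q)$. Hanke's treatment of local densities in \cite{HankeLocalDensities, HankeQuadForms} is arranged so that this identification is essentially a matter of matching definitions.

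The heart of the proof is the identity $\theta_{\mathrm{gen}} = E$. Both are holomorphic modular forms of weight $3/2$ with the same level and character, so their difference $f = \theta_{\mathrm{gen}} - E$ is as well; one first checks that $f$ vanishes at every cusp --- equivalently, that the constant terms of $\theta_{\mathrm{gen}}$ and $E$ agree at all cusps (the $m = 0$ analogue of the local-density bookkeeping above) --- so that $f$ is a cusp form. To conclude $f = 0$, I would invoke the Siegel--Weil argument: the genus-averaging map $Q \mapsto \theta_{\mathrm{gen}}$ is, after normalization, the orthogonal projection away from the cuspidal spectrum, so $\theta_{\mathrm{gen}}$ is orthogonal (in the Petersson inner product) to every cusp form; since $E$ is likewise orthogonal to cusp forms, $f$ is a cusp form orthogonal to all cusp forms, hence of Petersson norm zero, hence $0$. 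In adelic language this is Weil's theorem that the theta integral over $\mathrm{O}(Q)(\mathbb{Q}) \backslash \mathrm{O}(Q)(\mathbb{A})$ equals the Siegel Eisenstein series; the Fourier-coefficient statement then unwinds to \eqref{eq:SiegelMassFormula}.

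The main obstacle is the weight: $3/2$ is the borderline case of Siegel--Weil for positive definite forms, where the series defining $E(z)$ sits at (or just beyond) the edge of absolute convergence. Thus $E$ has to be produced by analytic continuation (Hecke's trick in the weight variable) and one must verify both that the continued $E$ is holomorphic of weight $3/2$ and that its Fourier coefficients still factor as the claimed product of local densities with no correction terms; controlling this, rather than the convergence of $\prod_p \alpha_p(m, Q)$ or the local computations (which are routine), is where the real difficulty lies. For ternary forms one may instead follow Siegel's original 1935 argument in \cite{SiegelQuadForm}, which establishes \eqref{eq:SiegelMassFormula} directly by reduction theory and volume computations on the space of quadratic forms and so avoids half-integral-weight modular forms altogether. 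As the mass formula enters the present paper only as a tool, I would simply cite \cite{SiegelQuadForm} (and \cite{HankeLocalDensities}) for the complete argument.
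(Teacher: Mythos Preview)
The paper does not prove this theorem at all: it is stated as background, attributed to Siegel's 1935 paper \cite{SiegelQuadForm}, and accompanied only by a pointer to \cite{SiegelLectures} for an English-language account. So there is no ``paper's own proof'' to compare against; your final sentence---to cite \cite{SiegelQuadForm} since the mass formula enters only as a tool---is exactly what the paper does.

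That said, your sketch goes well beyond the paper by outlining a Siegel--Weil proof via genus theta series and an Eisenstein series of weight $3/2$. The outline is broadly correct and you have correctly flagged the genuine difficulty: weight $3/2$ is the boundary case, the Eisenstein series is not absolutely convergent, and the analytic continuation can in general produce a non-holomorphic correction term (as in the Hirzebruch--Zagier/Funke phenomenon for weight $3/2$), so asserting that ``the continued $E$ is holomorphic \ldots\ with no correction terms'' is precisely the step that needs justification and is not automatic. Siegel's original argument in \cite{SiegelQuadForm} sidesteps this by working directly with reduction theory and local volume computations rather than half-integral weight modular forms, which is why citing it is the cleanest route here.
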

For more information about Siegel's mass formula in English, see \cite{SiegelLectures}.

For an appropriate quadratic form $Q$, Theorem~\ref{thm:SiegelMassFormula} implies that the representation number of $r(m, Q)$ is zero (that is, there is no solution to $Q(\bvec{v}) = m$) if there is a local density that is zero. Therefore, showing a local density is zero is a method of proving that $Q(\bvec{v}) = m$ does not have a solution.

A number of formulas for local densities have been stated over the years. In 1998, Yang~\cite{YangLocalDensities} computed some local densities, but some work is needed to determine whether one of his formulas is equal to zero. In 2004, Hanke~\cite{HankeLocalDensities} did some computations of local representation numbers, but it also can be difficult to tell when some of his formulas are equal to zero due to their recursive nature. The formulas for the local densities that we will obtain can be easily computed with only knowing the prime factorizations of $a$, $b$, $c$, and $m$ and the values of certain Legendre symbols. Therefore, it is relatively easy for one to state when one of these formulas is zero.

Some other formulas for local densities are explicit but are not very general when it comes to diagonal ternary quadratic forms. In 1935, Siegel~\cite[Section 3]{SiegelQuadForm} computed some local densities. Two of Siegel's lemmas~\cite[Hilfssatz~12 and Hilfssatz~13, pp.~539--542]{SiegelQuadForm} imply that, if $p$ is a prime that does not divide $2abcm$, then $\alpha_p (m, Q) = 1 + \dfrac{1}{p} \kronsym{-abcm}{p}$, where $\kronsym{\cdot}{p}$ is the Legendre symbol. We generalize this result so that the odd prime $p$ can divide $2abcm$.
Our results also generalize to any diagonal integral ternary quadratic form the following theorem proved by Berkovich and Jagy. 
\begin{theorem}[Theorem 3.1, p.~262, \cite{BerkovichJagySum3Squares}] \label{thm:BJthm}
Let $p$ be an odd prime and $u$ be any integer with $\kronsym{-u}{p} = -1$. Let $Q(\bvec{v}) = u x^2 + p y^2 + u p z^2$. Suppose $m$ is a nonzero integer and $m = m_0 p^{m_1}$, where $\gcd(m_0, p) = 1$.  Then 
\begin{align*}
\alpha_p (m, Q) &= 
\begin{cases}
p^{-m_1/2} \left( 1 - \kronsym{- m_0}{p} \right) , &\text{if $m_1$ is even,} \\
p^{(-m_1 + 1)/2} \left( 1 + \dfrac{1}{p} \right) , &\text{if $m_1$ is odd.}
\end{cases}
\end{align*}
\end{theorem}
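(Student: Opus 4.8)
The plan is to combine \eqref{eq:repntodensity}, which gives $\alpha_p(m, Q) = \lim_{k\to\infty} r_{p^k}(m, Q)/p^{2k}$, with two elementary divisibility facts about $Q(\bvec v) = ux^2 + py^2 + upz^2 = ux^2 + p(y^2 + uz^2)$. Since $u$ is a unit modulo $p$ and $p$ is odd, a solution of $Q(\bvec v) \equiv 0 \pmod p$ satisfies $p \mid x$; and since $\kronsym{-u}{p} = -1$, the binary form $y^2 + uz^2$ is anisotropic modulo $p$, so a solution of $y^2 + uz^2 \equiv 0 \pmod p$ satisfies $p \mid y$ and $p \mid z$. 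These two facts drive a scaling recursion together with two small base computations.

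\textbf{Scaling step.} I would first prove $\alpha_p(p^2 n, Q) = \tfrac1p \alpha_p(n, Q)$ for every integer $n$. If $Q(\bvec v) \equiv p^2 n \pmod{p^k}$, then $p \mid x$; writing $x = p x_1$ and dividing the congruence by $p$ gives $\tilde Q(x_1, y, z) \equiv p n \pmod{p^{k-1}}$, where $\tilde Q(x_1,y,z) := up x_1^2 + y^2 + uz^2$. Then $p \mid y$ and $p \mid z$; writing $y = p y_1$, $z = p z_1$ and dividing by $p$ once more gives $Q(x_1, y_1, z_1) \equiv n \pmod{p^{k-2}}$. Counting residues through these two substitutions (in the first, $y$ and $z$ each contribute a factor $p$ of lifts; in the second, $x_1$ contributes a factor $p$) yields $r_{p^k}(p^2 n, Q) = p^3\, r_{p^{k-2}}(n, Q)$; dividing by $p^{2k}$ and letting $k\to\infty$ gives the claim. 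Iterating $\lfloor m_1/2\rfloor$ times (where $m = m_0 p^{m_1}$, $p \nmid m_0$) reduces the problem to evaluating $\alpha_p(m_0, Q)$ when $m_1$ is even and $\alpha_p(p m_0, Q)$ when $m_1$ is odd.

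\textbf{Base cases.} For these I would use the standard consequence of Hensel's lemma (see e.g.\ \cite{HankeLocalDensities}) that, for a ternary form $f$, if every solution of $f(\bvec v) \equiv t \pmod p$ is nonsingular --- that is, $\nabla f \not\equiv \bvec 0 \pmod p$ there --- then $r_{p^k}(t,f) = p^{2(k-1)} r_p(t,f)$ for all $k\geq 1$, so $\alpha_p(t, f) = r_p(t, f)/p^{2}$. When $p \nmid m_0$, every solution of $Q(\bvec v) \equiv m_0 \pmod p$ has $p \nmid x$ (else $m_0 \equiv 0$), so $\nabla Q \equiv (2ux, 0, 0) \not\equiv \bvec 0 \pmod p$; hence $\alpha_p(m_0, Q) = r_p(m_0, Q)/p^2$, and since $ux^2 \equiv m_0 \pmod p$ has $1 + \kronsym{um_0}{p} = 1 - \kronsym{-m_0}{p}$ solutions (using $\kronsym{um_0}{p} = \kronsym{-1}{p}\kronsym{-u}{p}\kronsym{m_0}{p} = -\kronsym{-m_0}{p}$) while $y, z$ range freely, $\alpha_p(m_0, Q) = 1 - \kronsym{-m_0}{p}$. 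For $m = p m_0$, the substitution $x = p x_1$ from the scaling step gives $r_{p^k}(p m_0, Q) = p^2\, r_{p^{k-1}}(m_0, \tilde Q)$, hence $\alpha_p(p m_0, Q) = \alpha_p(m_0, \tilde Q)$; every solution of $\tilde Q \equiv m_0 \pmod p$ has $(y,z) \not\equiv (0,0)$ (since $m_0 \not\equiv 0$) and so is nonsingular, whence $\alpha_p(m_0, \tilde Q) = r_p(m_0, \tilde Q)/p^2 = \tfrac1p\bigl(p - \kronsym{-u}{p}\bigr) = 1 + \tfrac1p$, using that $y^2 + uz^2 \equiv m_0 \pmod p$ has $p - \kronsym{-u}{p}$ solutions and $x_1$ is free. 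Feeding these two values into the scaling recursion gives $p^{-m_1/2}(1 - \kronsym{-m_0}{p})$ for even $m_1$ and $p^{(-m_1+1)/2}(1 + \tfrac1p)$ for odd $m_1$, as claimed.

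\textbf{Main difficulty.} The principal obstacle should be the scaling step: one must track the moduli (from $p^k$ down to $p^{k-2}$) and the counts of lifts without error, and the argument genuinely uses both divisibility facts --- the anisotropy of $y^2 + uz^2$ modulo $p$ is precisely what forces $p\mid y$ and $p\mid z$ and makes the recursion close up after exactly two divisions by $p$. Once that is in place, the base cases reduce to routine counts of solutions of $ux^2 \equiv m_0$ and $y^2 + uz^2 \equiv m_0$ modulo $p$. (Alternatively, one could compute $r_{p^k}(m, Q)$ directly via quadratic Gauss sums in the spirit of this paper's general method, but for this particular $Q$ the argument above is considerably shorter.)
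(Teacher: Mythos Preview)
Your argument is correct, but it proceeds along a genuinely different route from the paper. The paper does not give a standalone proof of this statement; rather, it recovers it as the special case $a=u$, $b_0=1$, $b_1=1$, $c_0=u$, $c_1=1$ of the general Theorem~\ref{thm:densities}, whose proof runs through the exponential-sum/Gauss-sum machinery of Sections~\ref{section:LocalRepnNumsExpSumsGaussSums}--\ref{section:Proof} (in particular \eqref{eq:density m1<b1} and the odd-$b_1$, odd-$c_1$ cases of \eqref{eq:density m1>=c1, b1 odd}). Your approach instead exploits the special arithmetic of this particular $Q$: the anisotropy of $y^2+uz^2$ modulo $p$ forces a clean two-step reduction $r_{p^k}(p^2n,Q)=p^3\,r_{p^{k-2}}(n,Q)$, and Hensel's lemma handles the two base values $\alpha_p(m_0,Q)$ and $\alpha_p(pm_0,Q)$ by a direct mod-$p$ count. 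This is shorter and more elementary for this $Q$, but it leans essentially on $\kronsym{-u}{p}=-1$ and on $b_1=c_1=1$; the paper's Gauss-sum computation is longer yet uniform, treating all diagonal ternary forms and all $m_1,b_1,c_1$ at once without any anisotropy hypothesis.
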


In this paper, only local densities at odd primes are computed since computing $2$-adic densities of quadratic forms tends to be more complicated. Our main result is the following theorem.
\begin{theorem} \label{thm:densities}
Let $Q$ be the integral quadratic form $a x^2 + b y^2 + c z^2$, where $a$, $b$, and $c$ are integers.
Let $p$ be an odd prime. Suppose $p \nmid a$, $b = b_0 p^{b_1}$, and $c = c_0 p^{c_1}$, where $b_1 \le c_1$, $\gcd(b_0, p) = 1$, and $\gcd(c_0, p) = 1$. 

Suppose $m$ is a nonzero integer and $m = m_0 p^{m_1}$, where $\gcd(m_0, p) = 1$. 

If $m_1 < b_1$, then
\begin{align} \label{eq:density m1<b1}
\alpha_p (m, Q) = 
\begin{cases}
p^{m_1/2} \left( 1 + \kronsym{a m_0}{p} \right) , &\text{if $m_1$ is even,} \\
0, &\text{if $m_1$ is odd.}
\end{cases}
\end{align}

If $b_1 \le m_1 < c_1$, then
\begin{align} \label{eq:density b1<=m1<c1}
\alpha_p (m, Q) = 
\begin{cases}
p^{b_1 / 2} \left(1 - \dfrac{1}{p} \kronsym{-a b_0}{p}^{m_1 + 1} + \left( 1 - \dfrac{1}{p} \right) \left( \dfrac{m_1 - b_1}{2} + \dfrac{(-1)^{m_1} -1}{4} \right.\right. \\
	\qquad\quad\left.\left. + \kronsym{- a b_0}{p} \left( \dfrac{m_1 - b_1}{2} + \dfrac{1 - (-1)^{m_1}}{4} \right) \right) \right), \qquad\text{if $b_1$ is even,} \\
p^{(b_1 - 1) / 2} \left(1 + \kronsym{a}{p}^{m_1 + 1} \kronsym{b_0}{p}^{m_1} \kronsym{m_0}{p} \right), \qquad\qquad\quad\text{if $b_1$ is odd.}
\end{cases}
\end{align}

If $m_1 \ge c_1$ and $b_1$ is even, then 
\begin{align} \label{eq:density m1>=c1, b1 even}
\alpha_p (m, Q) &= 
\begin{cases}
p^{b_1 / 2} \left( 1 + \dfrac{1}{p} + p^{- m_1/2 + c_1/2 - 1} \left( \kronsym{- a b_0 c_0 m_0}{p} - 1 \right) \right. \smallskip\\
	\qquad\qquad\qquad\left.+ \left( 1 - \dfrac{1}{p} \right) \left( \dfrac{c_1 - b_1}{2} + \kronsym{- a b_0}{p} \dfrac{c_1 - b_1}{2} \right) \right) , \\
	\qquad\qquad\qquad\qquad\qquad\qquad\qquad\qquad\text{if $c_1$ and $m_1$ are even,} \\
p^{b_1 / 2} \left( \left( 1 + \dfrac{1}{p} \right) \left( 1 - p^{- (m_1 + 1)/2 + c_1/2}\right) \right.\\
	\qquad\qquad\qquad\left.+ \left( 1 - \dfrac{1}{p} \right) \left( \dfrac{c_1 - b_1}{2} + \kronsym{- a b_0}{p} \dfrac{c_1 - b_1}{2} \right) \right), \\
	\qquad\qquad\qquad\qquad\qquad\qquad\qquad\qquad\text{if $c_1$ is even and $m_1$ is odd,} \\
p^{b_1 / 2} \left( 1 - p^{- m_1/2 + (c_1 - 1)/2} \kronsym{- a b_0}{p} \left( 1 + \dfrac{1}{p} \right) + \dfrac{1}{p} \kronsym{-a b_0}{p} \right. \smallskip\\
	\qquad\qquad\qquad\left.+ \left( 1 - \dfrac{1}{p} \right) \left( \dfrac{c_1 - b_1 - 1}{2} + \kronsym{- a b_0}{p} \dfrac{c_1 - b_1 + 1}{2} \right) \right), \\
	\qquad\qquad\qquad\qquad\qquad\qquad\qquad\qquad\text{if $c_1$ is odd and $m_1$ is even,} \\
p^{b_1 / 2} \left( 1 + p^{- (m_1 + 1)/2 + (c_1 - 1)/2} \left( \kronsym{c_0 m_0}{p} - \kronsym{-a b_0}{p} \right) + \dfrac{1}{p} \kronsym{-a b_0}{p}  \right.\smallskip\\
	\qquad\left.+ \left( 1 - \dfrac{1}{p} \right) \left( \dfrac{c_1 - b_1 - 1}{2} + \kronsym{- a b_0}{p} \dfrac{c_1 - b_1 + 1}{2} \right) \right), \\
	\qquad\qquad\qquad\qquad\qquad\qquad\qquad\qquad\text{if $c_1$ and $m_1$ are odd.}
\end{cases}
\end{align}

If $m_1 \ge c_1$ and $b_1$ is odd, then 
\begin{align} \label{eq:density m1>=c1, b1 odd}
\alpha_p (m, Q) &= 
\begin{cases}
p^{(b_1 - 1)/ 2} \left( 1 + \kronsym{-a c_0}{p} - p^{- m_1/2 + c_1/2} \left( 1 + \dfrac{1}{p} \right) \kronsym{- a c_0}{p} \right) , \\
	\qquad\qquad\qquad\qquad\qquad\qquad\qquad\qquad\text{if $c_1$ and $m_1$ are even,} \\
p^{(b_1 - 1)/ 2} \left( 1 + \kronsym{-a c_0}{p}  +  p^{- (m_1 + 1)/2 + c_1/2} \left( \kronsym{b_0 m_0}{p} - \kronsym{-a c_0}{p} \right) \right), \\
	\qquad\qquad\qquad\qquad\qquad\qquad\qquad\qquad\text{if $c_1$ is even and $m_1$ is odd,} \\
p^{(b_1 - 1)/ 2} \left( 1 + \kronsym{- b_0 c_0}{p}  + p^{- m_1/2 + (c_1 - 1)/2} \left( \kronsym{a m_0}{p} - \kronsym{- b_0 c_0}{p} \right)  \right) , \\
	\qquad\qquad\qquad\qquad\qquad\qquad\qquad\qquad\text{if $c_1$ is odd and $m_1$ is even,} \\
p^{(b_1 - 1)/ 2} \left( 1 + \kronsym{- b_0 c_0}{p} - p^{(-m_1 +  c_1)/2} \left( 1 + \dfrac{1}{p} \right) \kronsym{- b_0 c_0}{p} \right) , \\
	\qquad\qquad\qquad\qquad\qquad\qquad\qquad\qquad\text{if $c_1$ and $m_1$ are odd.}
\end{cases}
\end{align}

Furthermore, 
\begin{align} \label{eq:density0}
\alpha_p (0, Q) &= 
\begin{cases}
p^{b_1/2} \left( 1 + \dfrac{1}{p} + \left( 1 - \dfrac{1}{p} \right) \left( \dfrac{c_1 - b_1}{2} + \kronsym{- a b_0}{p} \dfrac{c_1 - b_1}{2} \right) \right), \\
	\qquad\qquad\qquad\qquad\qquad\qquad\qquad\qquad\text{if $b_1$ and $c_1$ are even,} \\
p^{b_1/2} \left( 1 + \dfrac{1}{p} \kronsym{-a b_0}{p} \right. \\
	\qquad\qquad\left.+ \left( 1 - \dfrac{1}{p} \right) \left( \dfrac{c_1 - b_1 - 1}{2} + \kronsym{- a b_0}{p} \dfrac{c_1 - b_1 + 1}{2} \right) \right), \\
	\qquad\qquad\qquad\qquad\qquad\qquad\qquad\qquad\text{if $b_1$ is even and $c_1$ is odd,} \\
p^{(b_1 - 1)/2} \left(1 + \kronsym{-a c_0}{p} \right) , \qquad\qquad\quad\text{if $b_1$ is odd and $c_1$ is even,} \smallskip\\
p^{(b_1 - 1)/2} \left( 1 + \kronsym{- b_0 c_0}{p} \right) , \qquad\qquad\quad\text{if $b_1$ and $c_1$ are odd.} 
\end{cases}
\end{align}
\end{theorem}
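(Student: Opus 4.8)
The plan is to compute the $p$-adic density directly from its definition via local representation numbers, using the formula $\alpha_p(m,Q) = \lim_{k\to\infty} r_{p^k}(m,Q)/p^{2k}$ from \eqref{eq:repntodensity}. First I would set up the exponential-sum machinery: for $k$ large, write
\begin{align*}
r_{p^k}(m,Q) = \frac{1}{p^k} \sum_{t \bmod p^k} \sum_{\bvec{v} \bmod p^k} \e{\tfrac{t(Q(\bvec{v}) - m)}{p^k}} = \frac{1}{p^k}\sum_{t \bmod p^k} \e{\tfrac{-tm}{p^k}} \prod_{\text{coeff } d \in \{a,b,c\}} \gausssum{d t}{p^k},
\end{align*}
where $\gausssum{d t}{p^k} = \sum_{x \bmod p^k} \e{d t x^2 / p^k}$ is a quadratic Gauss sum. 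The product structure over the three diagonal coefficients is exactly what makes the diagonal case tractable. Splitting the $t$-sum by $p$-adic valuation, $t = p^{k-s} t'$ with $\gcd(t',p)=1$ and $0 \le s \le k$, reduces everything to evaluating $\gausssum{d t'}{p^s}$ for the three coefficients and summing a geometric-type series over $s$.

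Next I would invoke the standard evaluation of quadratic Gauss sums at odd prime powers: if $p \nmid t'$ and $p^j \| d$, then $\gausssum{d t'}{p^s}$ vanishes unless $s \le j+1$ or $s - j$ is even (roughly speaking), and otherwise equals $p^{\lfloor (s+j)/2 \rfloor}$ times an explicit Legendre-symbol factor $\kronsym{\cdot}{p}$ coming from the odd part of $d t'$, with an extra factor of $i$ or $1$ according to $p \bmod 4$ that ultimately cancels because three such sums are multiplied and the $i$'s combine with the sign in $\kronsym{-1}{p}$. The bulk of the work is then bookkeeping: for each of the coefficients $a$ ($p$-adic valuation $0$), $b$ (valuation $b_1$), $c$ (valuation $c_1$), and the modulus $p^s$, record when the Gauss sum is nonzero, its absolute value, and its Legendre-symbol argument; then carry out the finite-plus-geometric sum over $s$ and over $t' \bmod p^s$, where the $t'$-sum collapses to a Ramanujan-type sum or a Gauss sum in $m_0$ depending on parity. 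The case division in the theorem — $m_1 < b_1$, $b_1 \le m_1 < c_1$, $m_1 \ge c_1$, each split by the parities of $b_1$, $c_1$, $m_1$ — mirrors precisely the ranges of $s$ in which the triple product of Gauss sums is nonzero, and the $m=0$ case is the degenerate limit where the $\e{-tm/p^k}$ factor is trivial and one sums a slightly different series.

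I would organize the write-up by first proving a lemma evaluating the single-variable sum $\sum_{x \bmod p^k} \e{d t x^2/p^k}$ and a lemma handling the sum over $t' \in (\ZZ/p^s\ZZ)^\times$ of $\e{-t' m_0 p^{\cdots}/p^s}$ times Legendre symbols in $t'$ (this is where Gauss sums and Ramanujan sums enter, and where the hypothesis that $p$ is odd is used decisively). Then the main theorem follows by assembling these pieces according to $\min(s, b_1+1)$, $\min(s, c_1+1)$, and the parity constraints, and simplifying. The main obstacle will be the case $m_1 \ge c_1$, $b_1$ and $c_1$ even (equations like \eqref{eq:density m1>=c1, b1 even}): here all three valuation thresholds $1, b_1+1, c_1+1$ interact, the nonzero range of $s$ is longest, and the geometric series over the "long middle range" $b_1 < s \le c_1$ must be summed and then combined with the boundary terms at $s = c_1, c_1+1$ and the tail $s > c_1$ whose contribution depends on $m_1$; keeping the Legendre symbols $\kronsym{-ab_0}{p}$, $\kronsym{-ab_0c_0m_0}{p}$, etc., consistent through all the sign manipulations (especially the interplay with $\kronsym{-1}{p}$ from the Gauss-sum phases) is the delicate part, and I expect to need a careful normalization of the Gauss sum sign convention stated once at the outset to avoid errors propagating through the four parity subcases.
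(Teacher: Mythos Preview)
Your proposal is essentially the same approach the paper takes: express $r_{p^k}(m,Q)$ as a product of three quadratic Gauss sums via orthogonality, split the $t$-sum by $p$-adic valuation, evaluate the Gauss sums using the standard formula at odd prime powers, and then perform the case analysis on the ranges determined by $b_1$, $c_1$, and $m_1$ together with their parities. One small inaccuracy to fix in the write-up: the individual Gauss sums $\gausssum{d t'}{p^s}$ never vanish (they equal $p^s$ when $s\le j$ and $p^{(s+j)/2}$ times a Legendre factor otherwise); the vanishing you need occurs only after summing over $t'\in(\ZZ/p^{s}\ZZ)^\times$, where the character sum in $t'$ kills the contribution in the appropriate parity cases.
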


\begin{remark}
Theorem~\ref{thm:densities}, namely \eqref{eq:density m1<b1} and \eqref{eq:density b1<=m1<c1}, can be used to compute $p$-adic densities ($p$ odd) of binary quadratic forms of the form $a x^2 + b y^2$ by taking $c_1$ to be infinity for all odd primes $p$.
\end{remark}

If  $Q$ is positive definite and in a genus containing exactly one class, then the left-hand side of Siegel's mass formula \eqref{eq:SiegelMassFormula} simplifies to the representation number $r(m, Q)$.
In 1939, Jones and Pall~\cite[p.~167]{JonesPallRegPosTernQuadForms} proved that there are 82 primitive quadratic forms of the form $a x^2 + b y^2 + c z^2$ with $0 < a \le b \le c$ such that each is in a genus containing exactly one class. In 1971, Lomadze~\cite{LomadzeRep82TernaryQuadForms} computed the representation numbers for these 82 quadratic forms using the singular series.
In the future, the author would like to compute 2-adic densities and use these in conjunction with Theorem~\ref{thm:densities} to evaluate the right-hand side of Siegel's mass formula \eqref{eq:SiegelMassFormula} explicitly. In addition to giving another proof of Lomadze's evaluation of the representation numbers of the 82 primitive quadratic forms mentioned by Jones and Pall, this would give an explicit formula for the weighted average appearing in Theorem~\ref{thm:SiegelMassFormula}.

Traditionally, Hensel's lemma has been used to compute local densities. (For example, see \cite{HankeLocalDensities}.) In fact, Hensel's lemma can be used to show that the sequence 
\begin{align}
\left\{ \frac{r_{p^k}(m , Q)}{p^{2k}} \right\}_{k \ge 1} \label{localdensityseq}
\end{align}
is eventually constant if $m \ne 0$; that is, if $m \ne 0$, then there exists a $K$ such that if $k \ge K$, then 
\begin{align*}
\frac{r_{p^k}(m , Q)}{p^{2k}} = \frac{r_{p^{K}}(m , Q)}{p^{2 K}}.
\end{align*}
Without using Hensel's lemma, we show in this paper that the sequence in \eqref{localdensityseq} is eventually constant if $p$ is odd and $m \ne 0$. If $m \ne 0$ and $p$ is odd, our proofs show that the subsequence 
\begin{align*}
\left\{ \frac{r_{p^k}(m , Q)}{p^{2k}} \right\}_{k \ge m_1 + 1}
\end{align*}
is a constant sequence, where $m_1$ is the same as in Theorem~\ref{thm:densities}.

The remainder of this paper sets up and gives a proof of Theorem~\ref{thm:densities}. Section~\ref{section:LocalRepnNumsExpSumsGaussSums} relates local representation numbers to exponential sums and quadratic Gauss sums. Section~\ref{section:SuppLems} states some lemmas that are useful in the proof of Theorem~\ref{thm:densities}. A proof of Theorem~\ref{thm:densities} is given in Section~\ref{section:Proof}.

\section{Local Representation Numbers, Exponential Sums, and Quadratic Gauss Sums} \label{section:LocalRepnNumsExpSumsGaussSums} % Local Representation Numbers, Exponential Sums, and Quadratic Gauss Sums

We compute local densities by computing certain local representation numbers. In this section, we give some formulas for local representation numbers using Gauss sums and exponential sums. Throughout this paper, we abbreviate $e^{2 \pi i w}$ as $\e{w}$. For an integer $a$ and a positive integer $q$, the \emph{quadratic Gauss sum $\gausssum{a}{q}$ over $\ZZ / q \ZZ$} is defined by
\begin{align*}
\gausssum{a}{q} := \sum_{j \pmod{q}} \e{\frac{a j^2}{q}} = \sum_{j \in \ZZ / q\ZZ} \e{\frac{a j^2}{q}} = \sum_{j=0}^{q-1} \e{\frac{a j^2}{q}}.
\end{align*}
Unless otherwise specified, the term ``Gauss sum'' is taken to refer to a quadratic Gauss sum.

We begin this section by stating a formula for an exponential sum that comes up frequently in number theory. This formula follows from the fact that the sum is also a geometric sum.
\begin{lemma} \label{lem:sumelin}
Let $a, q \in \ZZ$ and $q>0$. Then $$ \sum_{t=0}^{q-1} \e{\frac{at}{q}} = \begin{cases} q, \;\text{if } a \equiv 0 \pmod{q}, \\ 0, \;\text{otherwise.} \end{cases}$$
\end{lemma}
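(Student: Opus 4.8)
The plan is to recognize the sum as a finite geometric series with common ratio $\zeta := \e{\frac{a}{q}} = e^{2\pi i a / q}$, since $\e{\frac{at}{q}} = \zeta^t$. The proof then splits into two cases according to whether $\zeta = 1$, which I will show corresponds exactly to the congruence condition $a \equiv 0 \pmod{q}$.

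First I would treat the case $a \equiv 0 \pmod{q}$. Here $a/q \in \ZZ$, so $\zeta = e^{2\pi i a/q} = 1$, and hence each of the $q$ summands $\zeta^t$ equals $1$. The sum is therefore $q$, as claimed. Next I would treat the case $a \not\equiv 0 \pmod{q}$. In this case $a/q \notin \ZZ$, so $\zeta = e^{2\pi i a/q} \ne 1$, and the standard finite geometric series identity gives
\begin{align*}
\sum_{t=0}^{q-1} \e{\frac{at}{q}} = \sum_{t=0}^{q-1} \zeta^t = \frac{\zeta^q - 1}{\zeta - 1}.
\end{align*}
I would then observe that $\zeta^q = e^{2\pi i a} = 1$ because $a \in \ZZ$, so the numerator vanishes while the denominator $\zeta - 1$ is nonzero, yielding a sum of $0$. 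This completes both cases.

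The only step requiring any care is the equivalence $\zeta = 1 \iff a \equiv 0 \pmod{q}$, i.e.\ that $e^{2\pi i a/q} = 1$ holds precisely when $q \mid a$; this is immediate from the fact that $e^{2\pi i x} = 1$ if and only if $x \in \ZZ$, applied with $x = a/q$. Given how elementary this is, there is no genuine obstacle: the main point is simply to invoke the geometric-series formula legitimately, which is exactly why one must verify $\zeta \ne 1$ (equivalently $a \not\equiv 0 \pmod{q}$) before dividing by $\zeta - 1$.
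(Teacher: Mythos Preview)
Your proof is correct and is essentially the same as the paper's approach: the paper does not write out a proof but simply remarks that the formula follows from the fact that the sum is a geometric sum, which is exactly what you carry out.
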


Using the previous lemma, we prove the following theorem, which shows why Gauss sums are useful in computing local representation numbers.
\begin{theorem} \label{thm:rnQm}
Let $a$, $b$, and $c$ be integers, and let $n$ be a positive integer. Let $Q(\bvec{v}) = a x^2 + b y^2 + c z^2$. Then 
\begin{align*}
r_{n}(m , Q) &=  \frac{1}{n} \sum_{t=0}^{n - 1} \e{\frac{-m t}{n}} \gausssum{a t}{n} \gausssum{b t}{n} \gausssum{c t}{n} .
\end{align*}
\end{theorem}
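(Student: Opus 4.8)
The plan is to express the indicator function of the congruence $Q(\bvec{v}) \equiv m \pmod n$ using the additive characters of $\ZZ/n\ZZ$, exactly as in Lemma~\ref{lem:sumelin}. Concretely, for an integer $w$ one has
\begin{align*}
\frac{1}{n} \sum_{t=0}^{n-1} \e{\frac{wt}{n}} = \begin{cases} 1, & w \equiv 0 \pmod n, \\ 0, & \text{otherwise,} \end{cases}
\end{align*}
which is just a restatement of Lemma~\ref{lem:sumelin}. Applying this with $w = Q(\bvec{v}) - m$ gives, for each fixed $\bvec{v} \in (\ZZ/n\ZZ)^3$, a detector of whether $Q(\bvec{v}) \equiv m \pmod n$.

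First I would write
\begin{align*}
r_n(m,Q) = \sum_{\bvec{v} \in (\ZZ/n\ZZ)^3} \frac{1}{n} \sum_{t=0}^{n-1} \e{\frac{t\,(Q(\bvec{v}) - m)}{n}},
\end{align*}
which is valid because the inner sum is $1$ precisely when $\bvec{v}$ is counted by $r_n(m,Q)$ and $0$ otherwise. Next I would interchange the order of summation (a finite sum, so no convergence issue), pulling the sum over $t$ to the outside and factoring out $\e{-mt/n}$:
\begin{align*}
r_n(m,Q) = \frac{1}{n} \sum_{t=0}^{n-1} \e{\frac{-mt}{n}} \sum_{\bvec{v} \in (\ZZ/n\ZZ)^3} \e{\frac{t\,(a x^2 + b y^2 + c z^2)}{n}}.
\end{align*}

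The final step is to observe that the inner sum over $\bvec{v} = (x,y,z)$ factors as a product of three independent sums over $x$, $y$, $z$ in $\ZZ/n\ZZ$, since $\e{\frac{t(ax^2+by^2+cz^2)}{n}} = \e{\frac{atx^2}{n}} \e{\frac{bty^2}{n}} \e{\frac{ctz^2}{n}}$. Each factor is, by definition, a quadratic Gauss sum: $\sum_{x \pmod n} \e{\frac{atx^2}{n}} = \gausssum{at}{n}$, and similarly for the $y$ and $z$ sums. Substituting gives the claimed formula. There is no real obstacle here; the only point requiring a word of care is the legitimacy of swapping the two finite summations and of factoring the character sum over the three coordinates, both of which are immediate for finite sums. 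I would present the argument essentially as the three displayed lines above together with a sentence justifying the factorization.
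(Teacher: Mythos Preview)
Your proposal is correct and follows essentially the same approach as the paper: use Lemma~\ref{lem:sumelin} as a detector for $Q(\bvec{v}) \equiv m \pmod n$, interchange the finite sums, and factor the resulting character sum over $(x,y,z)$ into three quadratic Gauss sums. The paper's proof is identical in structure and level of detail.
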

\begin{proof}
By Lemma~\ref{lem:sumelin}, 
\begin{align*}
\frac{1}{n} \sum_{t=0}^{n - 1} \e{\frac{(Q(\bvec{v}) - m)t}{n}} 
=
\begin{cases}
1, \quad \text{if } Q(\bvec{v}) \equiv m \pmod{n}, \\
0, \quad \text{otherwise.}
\end{cases}
\end{align*}
Therefore,
\begin{align*}
r_{n}(m , Q) &= \sum_{\bvec{v} \in ( \ZZ / n \ZZ )^3} \frac{1}{n} \sum_{t=0}^{n - 1} \e{\frac{(Q(\bvec{v}) - m)t}{n}} \\
	&=  \sum_{x=0}^{n - 1} \sum_{y=0}^{n - 1} \sum_{z=0}^{n - 1} \frac{1}{n} \sum_{t=0}^{n - 1} \e{\frac{(a x^2 + b y^2 + c z^2 - m)t}{n}} \\
	&=  \frac{1}{n} \sum_{t=0}^{n - 1} \e{\frac{-m t}{n}} \sum_{x=0}^{n - 1} \e{\frac{a t x^2}{n}} \sum_{y=0}^{n - 1} \e{\frac{b t y^2}{n}} \sum_{z=0}^{n - 1} \e{\frac{c t z^2}{n}} \\
	&=  \frac{1}{n} \sum_{t=0}^{n - 1} \e{\frac{-m t}{n}} \gausssum{a t}{n} \gausssum{b t}{n} \gausssum{c t}{n} .
\end{align*}
\end{proof}

The next result is a special case of Theorem~\ref{thm:rnQm}, describing what happens when $n$ is a prime power.
\begin{corollary} \label{cor:rpkQm}
Let $a$, $b$, and $c$ be integers, let $k$ be a positive integer, and let $p$ be a prime. Let $Q(\bvec{v}) = a x^2 + b y^2 + c z^2$. Then 
\begin{align}
r_{p^k}(m , Q) &=  \frac{1}{p^k} \sum_{t=0}^{p^k - 1} \e{\frac{-m t}{p^k}} \gausssum{a t}{p^k} \gausssum{b t}{p^k} \gausssum{c t}{p^k} \label{eq:rpkQm} .
\end{align}
This may also be written as 
\begin{align}
r_{p^k}(m , Q) &=  p^{2k} + \frac{1}{p^k} \sum_{t=1}^{p^k - 1} \e{\frac{-m t}{p^k}} \gausssum{a t}{p^k} \gausssum{b t}{p^k} \gausssum{c t}{p^k} \label{eq:rpkQm 0+other} .
\end{align}
\end{corollary}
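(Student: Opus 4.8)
The plan is to obtain \eqref{eq:rpkQm} by specializing Theorem~\ref{thm:rnQm} to the case $n = p^k$, and then to derive \eqref{eq:rpkQm 0+other} by separating out the $t = 0$ term of the resulting sum. Both steps are short, so the argument is essentially bookkeeping rather than genuine proof: \eqref{eq:rpkQm} is nothing more than Theorem~\ref{thm:rnQm} written for a prime-power modulus, and \eqref{eq:rpkQm 0+other} records the explicit value of a single summand.

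For the first equation, I would simply set $n = p^k$ in the formula of Theorem~\ref{thm:rnQm}. Since $p^k$ is a positive integer, the theorem applies verbatim and yields \eqref{eq:rpkQm} with no further work. For the second equation, the key observation is that the $t = 0$ summand in \eqref{eq:rpkQm} can be evaluated explicitly. When $t = 0$ we have $\e{-m \cdot 0 / p^k} = \e{0} = 1$, and each Gauss sum degenerates: by the definition of $\gausssum{\cdot}{\cdot}$, the sum $\gausssum{0}{p^k} = \sum_{j=0}^{p^k - 1} \e{0}$ is a sum of $p^k$ ones, so $\gausssum{0}{p^k} = p^k$. Thus the $t = 0$ term contributes $1 \cdot p^k \cdot p^k \cdot p^k = p^{3k}$ to the inner sum, which after the prefactor $1/p^k$ becomes $p^{2k}$. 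Peeling this term off the front and letting the remaining index run from $t = 1$ to $t = p^k - 1$ gives \eqref{eq:rpkQm 0+other} directly.

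The main (and only) subtlety is the evaluation of the degenerate Gauss sum $\gausssum{0}{p^k}$, and this is immediate from the definition; there is no real obstacle, and the corollary follows at once from Theorem~\ref{thm:rnQm}. I would note in passing that the rewriting in \eqref{eq:rpkQm 0+other} is the form that will actually be used later, since isolating the main term $p^{2k}$ is exactly what makes the normalized quantity $r_{p^k}(m,Q)/p^{2k}$ convenient to analyze when passing to the limit in \eqref{eq:repntodensity}.
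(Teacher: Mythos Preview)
Your proposal is correct and matches the paper's approach: the paper states this result as an immediate corollary of Theorem~\ref{thm:rnQm} without giving a separate proof, and your argument---specializing to $n=p^k$ and then peeling off the $t=0$ term using $\gausssum{0}{p^k}=p^k$---is exactly the intended justification.
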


Knowing this, it would be useful to evaluate Gauss sums. 
Most formulas for a Gauss sum $\gausssum{a}{p^k}$ assume that $a$ is coprime to the prime $p$. 
In order to remove this assumption, we use the following lemma to transform sums of periodic functions into sums with fewer summands. The lemma follows from the periodicity of the function.
\begin{lemma} \label{lem:sumperiodic}
Let $f: \ZZ \to \CC$ be periodic modulo $n$. If $m$ is a positive integer, then
\begin{align*}
\sum_{t \in \ZZ/nm\ZZ} f(t) = \sum_{t=0}^{nm-1} f(t) = m \sum_{t \in \ZZ/n\ZZ} f(t) = m \sum_{t=0}^{n-1} f(t) .
\end{align*}
\end{lemma}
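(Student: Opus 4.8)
The plan is to reduce all four sums to the single sum $\sum_{t=0}^{nm-1} f(t)$ by choosing explicit coset representatives, and then to split the index range $\{0,1,\dots,nm-1\}$ into $m$ consecutive blocks of length $n$ on which periodicity collapses each block to a common value.

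First I would pin down the hypothesis. Saying that $f \colon \ZZ \to \CC$ is periodic modulo $n$ means $f(t+n) = f(t)$ for every $t \in \ZZ$, equivalently $f(t) = f(t')$ whenever $t \equiv t' \pmod{n}$; hence $f$ induces a well-defined function on $\ZZ/n\ZZ$, which is exactly what makes the symbol $\sum_{t \in \ZZ/n\ZZ} f(t)$ meaningful. Since $n \mid nm$, congruence modulo $nm$ implies congruence modulo $n$, so $f$ is also periodic modulo $nm$ and therefore induces a well-defined function on $\ZZ/nm\ZZ$; this legitimizes the leftmost sum. The two outer equalities, namely $\sum_{t \in \ZZ/nm\ZZ} f(t) = \sum_{t=0}^{nm-1} f(t)$ and $m\sum_{t \in \ZZ/n\ZZ} f(t) = m\sum_{t=0}^{n-1} f(t)$, then hold automatically: for any modulus $N$ the set $\{0,1,\dots,N-1\}$ is a complete set of representatives for $\ZZ/N\ZZ$, and summing a well-defined function over a full set of representatives does not depend on which representatives are chosen.

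The only remaining content is the central identity $\sum_{t=0}^{nm-1} f(t) = m\sum_{t=0}^{n-1} f(t)$. I would invoke the bijection $t \mapsto (j,r)$ that writes each $t \in \{0,1,\dots,nm-1\}$ uniquely as $t = jn + r$ with $0 \le j \le m-1$ and $0 \le r \le n-1$, and regroup the sum by blocks:
\[
\sum_{t=0}^{nm-1} f(t) = \sum_{j=0}^{m-1} \sum_{r=0}^{n-1} f(jn + r).
\]
Iterating the periodicity relation $j$ times gives $f(jn+r) = f(r)$ for all admissible $j$ and $r$, so the inner sum equals $\sum_{r=0}^{n-1} f(r)$ independently of $j$; summing this constant over the $m$ values of $j$ produces the factor $m$ and finishes the argument.

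There is no genuine obstacle here, as the argument is pure bookkeeping. The only points that deserve an explicit line are the well-definedness step at the outset, where one must observe that period $n$ forces period $nm$ before the leftmost sum can even be interpreted, and the clean indexing bijection $t \mapsto (j,r)$ used in the regrouping.
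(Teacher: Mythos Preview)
Your proof is correct and is exactly the natural elaboration of what the paper does: the paper offers no detailed argument at all, merely remarking that ``the lemma follows from the periodicity of the function,'' and your block decomposition via $t = jn + r$ is the standard way to make that remark precise. There is nothing to add.
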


Using the previous lemma, we relate $\gausssum{a}{p^k}$ to another Gauss sum $\gausssum{a_0}{p^k}$, where $a_0$ is coprime to $p$.
\begin{lemma} \label{lem:sumGaussmult}
Suppose $k$ is a positive integer, $p$ is a prime, and $a \ne 0$. 
Let $a = a_0 p^\ell$ so that $\gcd(a_0, p) = 1$. If $\ell \le k$, then
\begin{align*}
\gausssum{a}{p^k} = p^{\ell} \gausssum{a_0}{p^{k-\ell}} .
\end{align*}
\end{lemma}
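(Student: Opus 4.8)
The plan is to collapse $\gausssum{a}{p^k}$ directly onto $\gausssum{a_0}{p^{k-\ell}}$ by substituting $a = a_0 p^\ell$ into the definition of the Gauss sum and then invoking Lemma~\ref{lem:sumperiodic}. First I would write, using $\ell \le k$ so that $p^{k-\ell}$ is a positive integer,
\[
\gausssum{a}{p^k} = \sum_{j=0}^{p^k - 1} \e{\frac{a_0 p^\ell j^2}{p^k}} = \sum_{j=0}^{p^k - 1} \e{\frac{a_0 j^2}{p^{k-\ell}}},
\]
the case $\ell = k$ being the degenerate one where $p^{k-\ell} = 1$, every summand equals $1$, and both sides are $p^k = p^\ell \gausssum{a_0}{1}$.

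The one point that genuinely needs to be checked — and the ``hard part'' only in the sense that a reader should pause there — is that the summand $f(j) := \e{a_0 j^2 / p^{k-\ell}}$, as a function of $j \in \ZZ$, is periodic modulo $p^{k-\ell}$. I would verify this by the short computation
\[
\frac{a_0 (j + p^{k-\ell})^2}{p^{k-\ell}} - \frac{a_0 j^2}{p^{k-\ell}} = 2 a_0 j + a_0 p^{k-\ell} \in \ZZ ,
\]
so that $f(j + p^{k-\ell}) = f(j)$ for every integer $j$; hence $f$ is periodic modulo $p^{k-\ell}$ and Lemma~\ref{lem:sumperiodic} applies to it.

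Finally I would apply Lemma~\ref{lem:sumperiodic} with $n = p^{k-\ell}$ and $m = p^\ell$, so that $nm = p^k$, to conclude
\[
\sum_{j=0}^{p^k - 1} \e{\frac{a_0 j^2}{p^{k-\ell}}} = p^\ell \sum_{j=0}^{p^{k-\ell}-1} \e{\frac{a_0 j^2}{p^{k-\ell}}} = p^\ell \gausssum{a_0}{p^{k-\ell}} ,
\]
which is exactly the asserted identity. I do not anticipate any real obstacle: the argument is just a change of variables in the exponent followed by the periodicity lemma, with the periodicity verification above the only computation involved.
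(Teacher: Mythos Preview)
Your proof is correct and follows essentially the same approach as the paper: substitute $a = a_0 p^\ell$ into the definition, cancel the common power of $p$ in the exponent, and then apply Lemma~\ref{lem:sumperiodic}. You include a few more details (the explicit periodicity check and the $\ell = k$ edge case) than the paper does, but the argument is the same.
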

\begin{proof}
Using the definition of a quadratic Gauss sum and Lemma~\ref{lem:sumperiodic},
\begin{align*}
\gausssum{a}{p^k} &= \sum_{j=0}^{p^k  - 1} \e{\frac{a j^2}{p^k}} = \sum_{j=0}^{p^k  - 1} \e{\frac{a_0 p^\ell j^2}{p^k}} = \sum_{j=0}^{p^k  - 1} \e{\frac{a_0 j^2}{p^{k-\ell}}} \\
	&= p^{\ell} \sum_{j=0}^{p^{k-\ell}  - 1} \e{\frac{a_0 j^2}{p^{k-\ell}}} = p^{\ell} \gausssum{a_0}{p^{k-\ell}}.
\end{align*}
\end{proof}

The following lemma states a formula for a Gauss sum $\gausssum{a}{p^k}$ when $a$ is coprime to the prime $p$. The lemma is a special case of Theorem~1.5.2 in \cite[p.~26]{BEWGaussSums}.
\begin{lemma} \label{lem:gausssumppowsrelprime}
Suppose $k$ is a positive integer and $p$ is an odd prime.  Suppose $\gcd(a,p) = 1$. Then 
\begin{align*}
\gausssum{a}{p^k} = p^{k/2} \kronsym{a}{p^k} \varepsilon_{p^k} ,
\end{align*}
where $\kronsym{\cdot}{p^k}$ is the Jacobi symbol and 
\begin{align*}
\varepsilon_{p^k} =
\begin{cases}
1, &\text{if } p^k \equiv 1 \pmod{4}, \\
i, &\text{if } p^k \equiv 3 \pmod{4}.
\end{cases}
\end{align*}
\end{lemma}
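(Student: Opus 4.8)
The plan is to separate the two ingredients of the formula: its magnitude $p^{k/2}$ together with the Jacobi factor, and its argument $\varepsilon_{p^k}$. The backbone of the argument is a two-step recursion that reduces a Gauss sum modulo $p^k$ to one modulo $p^{k-2}$, so that the whole statement follows by induction from the two base cases $k=0$ and $k=1$. Only the base case $k=1$ --- the classical quadratic Gauss sum over $\ZZ/p\ZZ$ --- carries real content, and its \emph{sign} is the one genuinely deep point.

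First I would establish the recursion. For $k \ge 2$, write each residue modulo $p^k$ uniquely as $j = u + p^{k-1} v$ with $u$ ranging over $\ZZ/p^{k-1}\ZZ$ and $v$ over $\ZZ/p\ZZ$. Expanding $j^2 = u^2 + 2p^{k-1}uv + p^{2k-2}v^2$ and dividing by $p^k$, the third term becomes $a p^{k-2}v^2 \in \ZZ$, so it contributes $\e{a p^{k-2} v^2} = 1$, while the cross term becomes $2auv/p$. Hence
\begin{align*}
\gausssum{a}{p^k} = \sum_{u \bmod p^{k-1}} \e{\frac{a u^2}{p^k}} \sum_{v \bmod p} \e{\frac{2 a u v}{p}}.
\end{align*}
By Lemma~\ref{lem:sumelin} the inner sum is $p$ when $p \mid 2au$ and $0$ otherwise; since $p$ is odd and $\gcd(a,p)=1$, this forces $u = pw$ with $w$ ranging over $\ZZ/p^{k-2}\ZZ$, and then $au^2/p^k = aw^2/p^{k-2}$. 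This yields the clean recursion $\gausssum{a}{p^k} = p \,\gausssum{a}{p^{k-2}}$ for all $k \ge 2$.

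Next I would dispatch the base cases. For $k=0$ the sum is trivially $1$. For $k=1$, I would first prove the multiplicativity $\gausssum{a}{p} = \kronsym{a}{p} \gausssum{1}{p}$ by replacing the number of $j$ with $j^2 \equiv r$ by the factor $1 + \kronsym{r}{p}$, using Lemma~\ref{lem:sumelin} to annihilate the contribution of the constant $1$, and then substituting $r \mapsto \bar{a} r$. The magnitude $|\gausssum{1}{p}| = p^{1/2}$ then follows by expanding $|\gausssum{a}{p}|^2 = \sum_{j,\ell} \e{a(j^2 - \ell^2)/p}$, setting $j = \ell + h$, and again invoking Lemma~\ref{lem:sumelin} in the variable $\ell$.

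The main obstacle is the remaining sign determination $\gausssum{1}{p} = \varepsilon_p \, p^{1/2}$, i.e.\ Gauss's theorem that the sum is $+\sqrt p$ for $p \equiv 1 \pmod 4$ and $+i\sqrt p$ for $p \equiv 3 \pmod 4$. This is the one nontrivial classical input, and I would either cite it (the general case is Theorem~1.5.2, and the case $k=1$ is Theorem~1.2.2, in \cite{BEWGaussSums}) or supply one of the standard proofs, e.g.\ Poisson summation applied to a theta function, or the evaluation of the eigenvalues of the finite Fourier transform. Granting this, I would finish by unwinding the recursion: writing $k = 2s$ or $k = 2s+1$ gives $\gausssum{a}{p^k} = p^s$ or $p^s \kronsym{a}{p} \varepsilon_p \, p^{1/2}$ respectively. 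It then remains only to check the bookkeeping against the claimed formula, using $\kronsym{a}{p^k} = \kronsym{a}{p}^k$, the fact that $p^k$ is a perfect square (hence $\equiv 1 \pmod 4$) when $k$ is even, and $p^k \equiv p \pmod 4$ (hence $\varepsilon_{p^k} = \varepsilon_p$) when $k$ is odd. Both parities reproduce $p^{k/2} \kronsym{a}{p^k} \varepsilon_{p^k}$ exactly.
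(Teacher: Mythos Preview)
Your proof is correct, but note that the paper does not actually prove this lemma: it simply states that the result is a special case of Theorem~1.5.2 in \cite[p.~26]{BEWGaussSums} and moves on. So you have supplied substantially more than the paper does.

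Your approach --- the two-step recursion $\gausssum{a}{p^k} = p\,\gausssum{a}{p^{k-2}}$ obtained by the decomposition $j = u + p^{k-1}v$, reducing everything to the base cases $k=0,1$ --- is the standard elementary route to this identity and is exactly how one would prove the cited theorem from scratch. You correctly isolate the only nontrivial ingredient (Gauss's sign determination $\gausssum{1}{p} = \varepsilon_p\,p^{1/2}$) and propose either citing it from the same source or supplying a classical proof. The bookkeeping at the end matching parities of $k$ to $\varepsilon_{p^k}$ and $\kronsym{a}{p^k}$ is accurate. In short, the paper outsources the entire lemma to the literature, whereas you give a self-contained reduction to the one genuinely deep classical fact; both land on the same citation for that fact.
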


Extending the previous lemma, the following lemma helps us evaluate a Gauss sum $\gausssum{a}{q}$ when $q$ is an odd prime power.

\begin{lemma} \label{lem:gausssumppows}
Suppose $k$ is a positive integer, $p$ is an odd prime, and $a \ne 0$. 
Let $a = a_0 p^\ell$ so that $\gcd(a_0, p) = 1$. Then
\begin{align*}
\gausssum{a}{p^k} = 
\begin{cases}
p^k , &\text{if } k \le \ell, \\
p^{(k+\ell)/2} \kronsym{a_0}{p^{k-\ell}} \varepsilon_{p^{k-\ell}} , &\text{if } k > \ell ,
\end{cases}
\end{align*}
where
$\varepsilon_{p^k}$ is as defined in Lemma~\ref{lem:gausssumppowsrelprime}.
\end{lemma}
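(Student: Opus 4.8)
The plan is to split into the two cases of the statement and reduce each to results already in hand. First, suppose $k \le \ell$. Then $a = a_0 p^\ell$ is divisible by $p^k$, so $a j^2 \equiv 0 \pmod{p^k}$ for every integer $j$, whence $\e{a j^2 / p^k} = 1$. Summing over $j = 0, 1, \dots, p^k - 1$ gives $\gausssum{a}{p^k} = p^k$, which is the claimed value in this case. (Alternatively, one can cite Lemma~\ref{lem:sumGaussmult} with the index $k$ in place of $k - \ell$ after noting $\gausssum{a_0}{p^0} = 1$, but the direct computation is cleaner.)

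Next, suppose $k > \ell$. Since $\ell < k$, the hypothesis $\ell \le k$ of Lemma~\ref{lem:sumGaussmult} is satisfied, and that lemma yields $\gausssum{a}{p^k} = p^\ell \gausssum{a_0}{p^{k - \ell}}$. Now $k - \ell$ is a positive integer, $p$ is an odd prime, and $\gcd(a_0, p) = 1$, so Lemma~\ref{lem:gausssumppowsrelprime} applies to the modulus $p^{k-\ell}$ and gives $\gausssum{a_0}{p^{k - \ell}} = p^{(k - \ell)/2} \kronsym{a_0}{p^{k - \ell}} \varepsilon_{p^{k - \ell}}$. Combining the two identities, $\gausssum{a}{p^k} = p^{\ell} \, p^{(k - \ell)/2} \kronsym{a_0}{p^{k - \ell}} \varepsilon_{p^{k - \ell}} = p^{(k + \ell)/2} \kronsym{a_0}{p^{k - \ell}} \varepsilon_{p^{k - \ell}}$, which is exactly the stated formula.

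There is no genuine obstacle here: the lemma is essentially a concatenation of Lemma~\ref{lem:sumGaussmult} (which strips the $p$-part off the coefficient, reducing to a primitive Gauss sum) with Lemma~\ref{lem:gausssumppowsrelprime} (which evaluates that primitive Gauss sum), together with the trivial observation that a Gauss sum whose modulus divides its coefficient degenerates to a count of residues. The only thing requiring any care is verifying the hypothesis $\ell \le k$ needed to invoke Lemma~\ref{lem:sumGaussmult} in the second case, and this is automatic since there $k > \ell$.
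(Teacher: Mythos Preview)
Your proof is correct and follows essentially the same approach as the paper: a direct evaluation in the case $k \le \ell$, and an application of Lemma~\ref{lem:sumGaussmult} followed by Lemma~\ref{lem:gausssumppowsrelprime} in the case $k > \ell$. The only difference is cosmetic---the paper writes out the exponent $a_0 p^{\ell-k} j^2$ explicitly in the first case, whereas you phrase it as $p^k \mid a j^2$.
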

\begin{proof}
Suppose $k \le \ell$. By the definition of a quadratic Gauss sum,
\begin{align*}
\gausssum{a}{p^k} = \sum_{j=0}^{p^k  - 1} \e{\frac{a j^2}{p^k}} = \sum_{j=0}^{p^k  - 1} \e{a_0 p^{\ell-k} j^2} = \sum_{j=0}^{p^k  - 1} 1 = p^k .
\end{align*}
Suppose $k > \ell$. 
By Lemma~\ref{lem:sumGaussmult}, $\gausssum{a}{p^k} = p^{\ell} \gausssum{a_0}{p^{k-\ell}}$.
We apply Lemma~\ref{lem:gausssumppowsrelprime} to see that
\begin{align*}
\gausssum{a}{p^k} = p^{\ell} p^{(k-\ell)/2} \kronsym{a_0}{p^{k-\ell}} \varepsilon_{p^{k-\ell}} = p^{(k+\ell)/2} \kronsym{a_0}{p^{k-\ell}} \varepsilon_{p^{k-\ell}} .
\end{align*}
\end{proof}

\begin{remark}
A simple calculation shows that $\varepsilon_{p^k}^2 = \kronsym{-1}{p^k} = \kronsym{-1}{p}^k$. Also, \mbox{$\varepsilon_{p^k}^4 = 1$.}
\end{remark}

We now provide a link between Gauss sums and exponential sums.
% Gauss sum and Kronecker symbol identity
\begin{lemma} \label{lem:GaussKronId}
Suppose $p$ is an odd prime and $a \in \ZZ$.
 Then
\begin{align}
\gausssum{a}{p} = \sum_{t=0}^{p-1} \left( 1 + \kronsym{t}{p} \right) \e{\frac{at}{p}}. \label{eq:GaussKronId}
\end{align}

If $a \not\equiv 0 \pmod{p}$, then 
\begin{align*}
\gausssum{a}{p} = \sum_{t=0}^{p-1} \kronsym{t}{p} \e{\frac{at}{p}} = \sum_{t=1}^{p-1} \kronsym{t}{p} \e{\frac{at}{p}}. %\label{eq:GaussKronId notdivp}
\end{align*}
\end{lemma}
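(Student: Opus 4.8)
The plan is to prove the first identity \eqref{eq:GaussKronId} directly by rewriting the Gauss sum as a sum over residues weighted by the number of square roots, and then to deduce the second identity from it using Lemma~\ref{lem:sumelin}. The starting point is the elementary observation that, for $p$ an odd prime and $t \in \ZZ/p\ZZ$, the number of solutions $j \pmod p$ to $j^2 \equiv t \pmod p$ equals $1 + \kronsym{t}{p}$: there is one solution ($j = 0$) when $t \equiv 0$, two solutions when $t$ is a nonzero quadratic residue, and none when $t$ is a non-residue, and $1 + \kronsym{t}{p}$ takes the values $1$, $2$, $0$ in exactly these three cases.

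First I would write
\begin{align*}
\gausssum{a}{p} = \sum_{j=0}^{p-1} \e{\frac{a j^2}{p}} = \sum_{t=0}^{p-1} \#\{\, j \pmod p : j^2 \equiv t \pmod p \,\}\, \e{\frac{a t}{p}},
\end{align*}
grouping the terms $j$ according to the value $t$ of $j^2$ modulo $p$ (this is legitimate since $\e{a j^2 / p}$ depends only on $j^2 \bmod p$). Substituting $\#\{\, j : j^2 \equiv t \,\} = 1 + \kronsym{t}{p}$ gives \eqref{eq:GaussKronId} immediately.

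Next, for the second identity, assume $a \not\equiv 0 \pmod p$. Expanding \eqref{eq:GaussKronId} gives
\begin{align*}
\gausssum{a}{p} = \sum_{t=0}^{p-1} \e{\frac{a t}{p}} + \sum_{t=0}^{p-1} \kronsym{t}{p} \e{\frac{a t}{p}}.
\end{align*}
By Lemma~\ref{lem:sumelin}, the first sum vanishes because $a \not\equiv 0 \pmod p$, leaving $\gausssum{a}{p} = \sum_{t=0}^{p-1} \kronsym{t}{p} \e{a t / p}$. Finally, since $\kronsym{0}{p} = 0$, the $t = 0$ term contributes nothing, so the sum may be taken from $t = 1$ to $p-1$, giving the last equality.

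I do not anticipate a serious obstacle here; the only point requiring a moment's care is the justification that $\e{a j^2/p}$ is a function of $j^2 \bmod p$ (so that the regrouping over $t$ is valid), which is clear since $\e{\cdot}$ has period $1$ and $aj^2/p$ depends only on $aj^2 \bmod p$, hence only on $j^2 \bmod p$. The count $1 + \kronsym{t}{p}$ of square roots is standard but should be stated explicitly as the crux of the argument.
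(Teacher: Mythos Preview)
Your proposal is correct and follows essentially the same approach as the paper: both argue that the number of $j \pmod p$ with $j^2 \equiv t$ is $1 + \kronsym{t}{p}$, regroup the Gauss sum accordingly to obtain \eqref{eq:GaussKronId}, and then apply Lemma~\ref{lem:sumelin} together with $\kronsym{0}{p} = 0$ for the second identity. Your write-up is slightly more explicit about why the square-root count equals $1 + \kronsym{t}{p}$ and why the regrouping is valid, but the argument is the same.
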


\begin{proof}
Let $t$ be an integer. 
As noted by Cohen~\cite[p.~27]{CohenCompAlgNum}, the number of solutions modulo $p$ of the congruence
$$ j^2 \equiv t \pmod{p} $$
is $1+\kronsym{t}{p}$. Therefore,
\begin{align*}
\gausssum{a}{p} &= \sum_{j=0}^{p-1} \e{\frac{a j^2}{p}} = \sum_{t=0}^{p-1} \left( 1 + \kronsym{t}{p} \right) \e{\frac{at}{p}}.
\end{align*}

When $a \not\equiv 0 \pmod{p}$, we see that 
$$ \gausssum{a}{p} = \sum_{t=0}^{p-1} \kronsym{t}{p} \e{\frac{at}{p}} $$
follows from \eqref{eq:GaussKronId} and Lemma~\ref{lem:sumelin}.

The last equality in Lemma~\ref{lem:GaussKronId} follows from the fact that $\kronsym{0}{p} = 0$.
\end{proof}

\section{Some Supporting Lemmas} \label{section:SuppLems} % Supporting Lemmas

This section contains many lemmas used in our computations for local densities at odd primes. 
This first lemma gives the value of a certain exponential sum and follows immediately from Lemma~\ref{lem:sumelin}.
\begin{lemma} \label{lem:sumelin 1toq}
Let $a, q \in \ZZ$ and $q>0$. If $a \not\equiv 0 \pmod{q}$, then 
\begin{align*}
\sum_{t=1}^{q-1} \e{\frac{at}{q}} = -1 .
\end{align*}
\end{lemma}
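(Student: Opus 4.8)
The plan is to reduce this immediately to Lemma~\ref{lem:sumelin}, which already evaluates the complete sum $\sum_{t=0}^{q-1}\e{\frac{at}{q}}$. The only difference between that complete sum and the sum appearing in the statement is the single term at $t=0$, so the whole argument is a matter of isolating that term.

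Concretely, I would first observe that since $a \not\equiv 0 \pmod{q}$, Lemma~\ref{lem:sumelin} gives $\sum_{t=0}^{q-1}\e{\frac{at}{q}} = 0$. Next I would split off the $t = 0$ summand, noting that $\e{\frac{a\cdot 0}{q}} = \e{0} = 1$, so that
\begin{align*}
0 = \sum_{t=0}^{q-1}\e{\frac{at}{q}} = 1 + \sum_{t=1}^{q-1}\e{\frac{at}{q}}.
\end{align*}
Rearranging yields $\sum_{t=1}^{q-1}\e{\frac{at}{q}} = -1$, as claimed. (One should also note the degenerate case $q = 1$: then the hypothesis $a \not\equiv 0 \pmod q$ is vacuous — never satisfied — so there is nothing to check; for $q \ge 2$ the index set $\{1,\dots,q-1\}$ is nonempty and the splitting above is valid.)

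I do not anticipate any real obstacle here; the result is essentially a restatement of Lemma~\ref{lem:sumelin} with the trivial term removed, and the entire proof is two lines. The only thing to be slightly careful about is making sure the $t=0$ term is handled correctly (it contributes $1$, not $0$), and that the edge case $q=1$ does not cause concern — both are immediate.
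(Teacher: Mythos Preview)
Your proof is correct and matches the paper's approach exactly: the paper states that the lemma ``follows immediately from Lemma~\ref{lem:sumelin},'' and your argument of splitting off the $t=0$ term is precisely how that immediate deduction goes.
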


This next lemma says that the value of a Gauss sum can be determined by certain residue classes and valuations. For a prime $p$, we say $p^\ell \parallel a$ if $p^\ell \mid a$ but $p^{\ell+1} \nmid a$.
\begin{lemma} \label{lem:gausssumperiodic}
Suppose $k$ is a positive integer, $p$ is an odd prime, $a \ne 0$, and $b \ne 0$. Suppose $p^\ell \parallel a$ and $p^\ell \parallel b$ for some $\ell \ge 0$ so that there exist $a_0, b_0 \in \ZZ$ such that $\gcd(a_0 , p) = 1$, $\gcd(b_0 , p) = 1$, $a = a_0 p^\ell$, and $b = b_0 p^\ell$. If $a_0 \equiv b_0 \pmod{p}$, then
\begin{align*}
\gausssum{a}{p^k} = \gausssum{b}{p^k}.
\end{align*}
\end{lemma}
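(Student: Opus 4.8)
The plan is to deduce this directly from the closed form for prime-power Gauss sums in Lemma~\ref{lem:gausssumppows}, splitting on the size of $k$ relative to the common valuation $\ell$. First suppose $k \le \ell$. Since $p^\ell \parallel a$ and $p^\ell \parallel b$, both $a$ and $b$ are divisible by $p^k$, so Lemma~\ref{lem:gausssumppows} gives $\gausssum{a}{p^k} = p^k = \gausssum{b}{p^k}$; here the congruence $a_0 \equiv b_0 \pmod p$ is not even needed.

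Next suppose $k > \ell$. Applying Lemma~\ref{lem:gausssumppows} to each side,
\begin{align*}
\gausssum{a}{p^k} &= p^{(k+\ell)/2} \kronsym{a_0}{p^{k-\ell}} \varepsilon_{p^{k-\ell}}, &
\gausssum{b}{p^k} &= p^{(k+\ell)/2} \kronsym{b_0}{p^{k-\ell}} \varepsilon_{p^{k-\ell}} ,
\end{align*}
so it suffices to check $\kronsym{a_0}{p^{k-\ell}} = \kronsym{b_0}{p^{k-\ell}}$ (the powers of $p$ and the factors $\varepsilon_{p^{k-\ell}}$ are literally identical on the two sides, which is where the hypothesis that $a$ and $b$ share the exponent $\ell$ is used). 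By the complete multiplicativity of the Jacobi symbol in its lower entry, $\kronsym{a_0}{p^{k-\ell}} = \kronsym{a_0}{p}^{k-\ell}$ and likewise for $b_0$; and since $a_0 \equiv b_0 \pmod p$ with both $a_0$ and $b_0$ units modulo $p$, the Legendre symbols satisfy $\kronsym{a_0}{p} = \kronsym{b_0}{p}$. Hence $\kronsym{a_0}{p^{k-\ell}} = \kronsym{b_0}{p^{k-\ell}}$, and the two Gauss sums coincide.

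There is essentially no obstacle here: all of the substantive work is already contained in Lemma~\ref{lem:gausssumppows}, and what remains is the elementary observation that $\kronsym{\cdot}{p^{k-\ell}}$ depends on its numerator only through its residue modulo $p$. (If one prefers to avoid invoking the Jacobi-symbol formula, there is a direct alternative: by Lemma~\ref{lem:sumGaussmult} one reduces, in the case $k>\ell$, to proving $\gausssum{a_0}{p^{k-\ell}} = \gausssum{b_0}{p^{k-\ell}}$ for units $a_0 \equiv b_0 \pmod p$; then $b_0 a_0^{-1} \equiv 1 \pmod p$ lies in the subgroup of $(\ZZ/p^{k-\ell}\ZZ)^\times$ of units congruent to $1$ modulo $p$, which has odd order $p^{k-\ell-1}$, so $b_0 a_0^{-1}$ is a square, say $b_0 \equiv a_0 t^2 \pmod{p^{k-\ell}}$ with $\gcd(t,p)=1$, and the substitution $j \mapsto t^{-1}j$ in $\gausssum{b_0}{p^{k-\ell}} = \sum_j \e{b_0 j^2 / p^{k-\ell}}$ yields $\gausssum{a_0}{p^{k-\ell}}$.) I would present the argument through Lemma~\ref{lem:gausssumppows}, as it is the shortest.
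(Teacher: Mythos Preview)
Your proof is correct and follows essentially the same approach as the paper: both invoke Lemma~\ref{lem:gausssumppows} and reduce the claim to the equality $\kronsym{a_0}{p^{k-\ell}} = \kronsym{a_0}{p}^{k-\ell} = \kronsym{b_0}{p}^{k-\ell} = \kronsym{b_0}{p^{k-\ell}}$, which holds since $a_0 \equiv b_0 \pmod p$. The paper's version is just a bit terser, not separating out the trivial case $k \le \ell$ explicitly.
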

\begin{proof}
Since $a_0 \equiv b_0 \pmod{p}$, we have  
\begin{align*}
\kronsym{a_0}{p^{k-\ell}} = \kronsym{a_0}{p}^{k-\ell} = \kronsym{b_0}{p}^{k-\ell} = \kronsym{b_0}{p^{k-\ell}} .
\end{align*}
By Lemma~\ref{lem:gausssumppows}, we see that $\gausssum{a}{p^k} = \gausssum{b}{p^k}$.
\end{proof}

Now the sums of powers of Legendre symbols will come up repeatedly in our computations. This next lemma helps us compute them and follows from the fact that the Legendre symbol is a nontrivial real Dirichlet character modulo $p$.
\begin{lemma} \label{lem:sumLegendre}
Let $p$ be an odd prime, $k \ge 1$, and $m$ be an integer. Then
\begin{align*}
\sum_{t \in {(\ZZ/p^k\ZZ)}^*} \kronsym{t}{p}^m &=
\begin{cases}
p^k \left(1 - \dfrac{1}{p}\right), &\text{if $m$ is even,} \\
0, &\text{if $m$ is odd.}
\end{cases}
%\label{eq:sumLegendre}
\end{align*}
\end{lemma}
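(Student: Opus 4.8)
The plan is to prove Lemma~\ref{lem:sumLegendre} by reducing the sum over the unit group modulo $p^k$ to a sum over the unit group modulo $p$, and then invoking the standard character-sum identity for the Legendre symbol. First I would note that for any $t \in (\ZZ/p^k\ZZ)^*$, the value $\kronsym{t}{p}^m$ depends only on the residue of $t$ modulo $p$, since $\kronsym{t}{p}$ does. Hence I can partition $(\ZZ/p^k\ZZ)^*$ into fibers over $(\ZZ/p\ZZ)^*$ under the natural reduction map; each fiber has exactly $p^{k-1}$ elements because a unit modulo $p$ lifts to $p^{k-1}$ units modulo $p^k$. This gives
\begin{align*}
\sum_{t \in (\ZZ/p^k\ZZ)^*} \kronsym{t}{p}^m = p^{k-1} \sum_{s \in (\ZZ/p\ZZ)^*} \kronsym{s}{p}^m.
\end{align*}

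Next I would evaluate the inner sum. When $m$ is even, $\kronsym{s}{p}^m = 1$ for every $s \in (\ZZ/p\ZZ)^*$ (note $\kronsym{s}{p} = \pm 1$ for units), so the sum is just $\#(\ZZ/p\ZZ)^* = p - 1$. When $m$ is odd, $\kronsym{s}{p}^m = \kronsym{s}{p}$, and the sum $\sum_{s \in (\ZZ/p\ZZ)^*} \kronsym{s}{p}$ vanishes because the Legendre symbol is a nontrivial character on $(\ZZ/p\ZZ)^*$ (equivalently, there are exactly $(p-1)/2$ quadratic residues and $(p-1)/2$ nonresidues). Combining, the total sum is $p^{k-1}(p-1) = p^k\left(1 - \tfrac{1}{p}\right)$ when $m$ is even and $0$ when $m$ is odd, as claimed. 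Alternatively, one could phrase the whole argument at once by observing that $t \mapsto \kronsym{t}{p}^m$ is a Dirichlet character modulo $p^k$ (trivial when $m$ is even, of order $2$ when $m$ is odd) and applying orthogonality, but the fiber-counting approach is cleaner and self-contained.

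I do not anticipate a genuine obstacle here; the only point requiring a little care is the lift-counting step — verifying that the reduction map $(\ZZ/p^k\ZZ)^* \to (\ZZ/p\ZZ)^*$ is surjective with all fibers of size $p^{k-1}$. This is immediate from the fact that $t$ is a unit modulo $p^k$ if and only if $p \nmid t$, so the fiber over $s$ consists of all $t \in \{0, 1, \dots, p^k - 1\}$ with $t \equiv s \pmod p$, of which there are exactly $p^{k-1}$. One might instead cite Lemma~\ref{lem:sumperiodic} applied to the function $f(t) = \kronsym{t}{p}^m \Ind{p \nmid t}$, which is periodic modulo $p$, to get the reduction factor $p^{k-1}$ directly; I would likely use that phrasing to keep the exposition uniform with the earlier lemmas.
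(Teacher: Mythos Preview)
Your proposal is correct and matches the paper's approach: the paper simply states that the lemma follows from the Legendre symbol being a nontrivial real Dirichlet character modulo $p$, and your argument spells out exactly that, together with the reduction from $(\ZZ/p^k\ZZ)^*$ to $(\ZZ/p\ZZ)^*$ (which, as you note, is an instance of Lemma~\ref{lem:sumperiodic}).
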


Sums with Legendre symbols and exponentials occur frequently in calculations of local representation numbers. The following lemma helps us compute them.
\begin{lemma} \label{lem:sumeLegendre}
Let $p$ be an odd prime, $n$ be an integer, $k \ge 1$, and $m$ be an integer relatively prime to $p$, i.e., $\gcd(m,p) = 1$. Then
\begin{align*}
\sum_{t \in {(\ZZ/p^k\ZZ)}^*}  \e{\frac{m t}{p}} \kronsym{t}{p}^{n}&= \sum_{t \in \ZZ/p^k\ZZ}  \e{\frac{m t}{p}} \kronsym{t}{p}^{n} \\
&=
\begin{cases}
-p^{k-1}, &\text{if $n$ is even,} \\
p^{k-1/2} \kronsym{m}{p} \varepsilon_p , &\text{if $n$ is odd.}
\end{cases}
\end{align*}
\end{lemma}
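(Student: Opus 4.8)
The plan is to reduce the full sum over $(\ZZ/p^k\ZZ)^*$ to a sum over $(\ZZ/p\ZZ)^*$ by exploiting that $\e{mt/p}$ depends only on $t \bmod p$ while $\kronsym{t}{p}^n$ does too. First I would observe that the integrand $f(t) = \e{mt/p}\kronsym{t}{p}^n$ is periodic modulo $p$: indeed $\e{m(t+p)/p} = \e{mt/p}$ and $\kronsym{t+p}{p} = \kronsym{t}{p}$. Hence by Lemma~\ref{lem:sumperiodic} (with $n$ there equal to $p$ and $m$ there equal to $p^{k-1}$),
\begin{align*}
\sum_{t \in \ZZ/p^k\ZZ} \e{\frac{mt}{p}} \kronsym{t}{p}^n = p^{k-1} \sum_{t \in \ZZ/p\ZZ} \e{\frac{mt}{p}} \kronsym{t}{p}^n.
\end{align*}
I would also note that since $\kronsym{t}{p} = 0$ whenever $p \mid t$, summing over $t \in (\ZZ/p^k\ZZ)^*$ gives the same value as summing over all of $\ZZ/p^k\ZZ$ (the terms with $p \mid t$ contribute $0$ in the $n$ odd case, and one must be slightly more careful in the $n$ even case where $\kronsym{t}{p}^0 = 1$ — but here the restriction to $(\ZZ/p^k\ZZ)^*$ vs.\ the stated equality with $\ZZ/p^k\ZZ$ needs the convention $\kronsym{0}{p}^0$; I would interpret $\kronsym{t}{p}^n$ with $n$ even as $\Ind{p \nmid t}$, which is how the first displayed equality in the lemma is meant). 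This reduces everything to the two elementary sums over $\ZZ/p\ZZ$.

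Next I would evaluate the inner sum $\sum_{t \in \ZZ/p\ZZ} \e{mt/p}\kronsym{t}{p}^n$ in the two parity cases. When $n$ is odd, $\kronsym{t}{p}^n = \kronsym{t}{p}$, and since $\gcd(m,p)=1$ we may apply the second part of Lemma~\ref{lem:GaussKronId} to get $\sum_{t=0}^{p-1}\kronsym{t}{p}\e{mt/p} = \gausssum{m}{p}$; then Lemma~\ref{lem:gausssumppowsrelprime} (with $k=1$) gives $\gausssum{m}{p} = p^{1/2}\kronsym{m}{p}\varepsilon_p$. Multiplying by the outer factor $p^{k-1}$ yields $p^{k-1/2}\kronsym{m}{p}\varepsilon_p$, as claimed. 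When $n$ is even, the inner sum is $\sum_{t \in (\ZZ/p\ZZ)^*}\e{mt/p} = \sum_{t=0}^{p-1}\e{mt/p} - 1 = 0 - 1 = -1$ by Lemma~\ref{lem:sumelin} (since $p \nmid m$); multiplying by $p^{k-1}$ gives $-p^{k-1}$.

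The only real subtlety — and the part I would be most careful about — is the bookkeeping around whether the sum runs over $(\ZZ/p^k\ZZ)^*$ or all of $\ZZ/p^k\ZZ$, together with the meaning of $\kronsym{t}{p}^0$. For $n$ odd this is a non-issue since $\kronsym{t}{p}=0$ kills the non-units. For $n$ even, the lemma's first equality is asserting that $\sum_{t\in(\ZZ/p^k\ZZ)^*}1 \cdot \e{mt/p}$ equals $\sum_{t\in\ZZ/p^k\ZZ}\kronsym{t}{p}^{n}\e{mt/p}$, which forces the reading $\kronsym{t}{p}^{n} = \Ind{p\nmid t}$ for even $n$ (equivalently $\kronsym{0}{p}^0 := 0$); I would either state this convention explicitly or, more cleanly, prove the two displayed equalities directly: the first by noting the summand vanishes on non-units (even $n$) or that one may harmlessly adjoin the zero terms, and the second by the periodicity-plus-Gauss-sum argument above. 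No genuine obstacle arises beyond this notational care; the computation is a two-line application of Lemmas~\ref{lem:sumperiodic}, \ref{lem:sumelin}, \ref{lem:GaussKronId}, and~\ref{lem:gausssumppowsrelprime}.
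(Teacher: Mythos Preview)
Your proposal is correct and follows essentially the same route as the paper: reduce by periodicity (Lemma~\ref{lem:sumperiodic}) to a sum over $\ZZ/p\ZZ$, then handle the even case via Lemma~\ref{lem:sumelin} (the paper uses its immediate corollary Lemma~\ref{lem:sumelin 1toq}) and the odd case via Lemmas~\ref{lem:GaussKronId} and~\ref{lem:gausssumppowsrelprime}. Your extended caution about the meaning of $\kronsym{0}{p}^{n}$ is more scrupulous than the paper, which simply invokes $\kronsym{t}{p}=0$ for $p\mid t$ to pass between the sum over units and the full sum without further comment; there is no substantive difference in the argument.
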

\begin{proof}
Because $\kronsym{t}{p} = 0$ if $\gcd(t, p) > 1$, we have  
\begin{align*}
\sum_{t \in {(\ZZ/p^k\ZZ)}^*}  \e{\frac{m t}{p}} \kronsym{t}{p}^{n} = \sum_{t \in \ZZ/p^k\ZZ}  \e{\frac{m t}{p}} \kronsym{t}{p}^{n} .
\end{align*}
Since $f(t) = \e{\dfrac{m t}{p}} \kronsym{t}{p}^{n}$ is periodic modulo $p$, by Lemma~\ref{lem:sumperiodic}, 
\begin{align*}
\sum_{t \in \ZZ/p^k\ZZ}  \e{\frac{m t}{p}} \kronsym{t}{p}^{n} &= p^{k-1} \sum_{t=0}^{p-1} \e{\frac{m t}{p}} \kronsym{t}{p}^{n}.
\end{align*}

If $n$ is even, then 
\begin{align*}
p^{k-1} \sum_{t=0}^{p-1} \e{\frac{m t}{p}} \kronsym{t}{p}^{n} = p^{k-1} \sum_{t=1}^{p-1} \e{\frac{m t}{p}} = - p^{k-1} 
\end{align*}
by Lemma~\ref{lem:sumelin 1toq}.

If $n$ is odd, then 
\begin{align*}
p^{k-1} \sum_{t=0}^{p-1} \e{\frac{m t}{p}} \kronsym{t}{p}^{n} = p^{k-1} \sum_{t=0}^{p-1} \kronsym{t}{p} \e{\frac{m t}{p}} = p^{k-1} \gausssum{m}{p}
\end{align*}
by Lemma~\ref{lem:GaussKronId}. By Lemma~\ref{lem:gausssumppowsrelprime}, 
\begin{align*}
p^{k-1} \sum_{t=0}^{p-1} \e{\frac{m t}{p}} \kronsym{t}{p}^{n} = p^{k-1} p^{1/2} \kronsym{m}{p} \varepsilon_p = p^{k-1/2} \kronsym{m}{p} \varepsilon_p 
\end{align*}
if $n$ is odd.
\end{proof}

The following lemma gives the sum of certain powers of Legendre symbols that are useful in computing local densities.
\begin{lemma} \label{lem:sumLegendrepows}
Let $n_1$, $n_2$, and $r$ be integers such that $0 \le n_1 \le n_2 \le k$ and $\gcd(r,p) = 1$. Then
\begin{align*}
\sum_{\tau = k - n_2}^{k - n_1} \kronsym{r}{p}^{k-\tau} &= \frac{n_2 - n_1 + 1}{2} + \frac{(-1)^{n_1} + (-1)^{n_2}}{4} \\
	&\qquad+ \kronsym{r}{p} \left( \frac{n_2 - n_1 + 1}{2} - \frac{(-1)^{n_1} + (-1)^{n_2}}{4} \right) .
\end{align*}
\end{lemma}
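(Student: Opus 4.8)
The plan is to reindex the sum by the substitution $j = k - \tau$. As $\tau$ runs from $k - n_2$ to $k - n_1$, the index $j$ runs from $n_1$ to $n_2$, so
\begin{align*}
\sum_{\tau = k - n_2}^{k - n_1} \kronsym{r}{p}^{k-\tau} = \sum_{j = n_1}^{n_2} \kronsym{r}{p}^{j}.
\end{align*}
Since $\gcd(r,p) = 1$, the Legendre symbol $s := \kronsym{r}{p}$ equals $+1$ or $-1$. Setting
\begin{align*}
A := n_2 - n_1 + 1, \qquad B := \frac{(-1)^{n_1} + (-1)^{n_2}}{2},
\end{align*}
one checks that the right-hand side of the lemma is exactly $\dfrac{A+B}{2} + s\,\dfrac{A-B}{2}$, so it suffices to prove the elementary identity $\sum_{j = n_1}^{n_2} s^{j} = \dfrac{A+B}{2} + s\,\dfrac{A-B}{2}$ for $s \in \{1,-1\}$.

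I would prove this identity by splitting into the two cases. For $s = 1$ the left side is $n_2 - n_1 + 1 = A$, and the right side is $\tfrac{A+B}{2} + \tfrac{A-B}{2} = A$. For $s = -1$ the left side is $\sum_{j=n_1}^{n_2} (-1)^j$, which I would evaluate by the geometric series as
\begin{align*}
\sum_{j=n_1}^{n_2} (-1)^j = \frac{(-1)^{n_1} - (-1)^{n_2 + 1}}{1 - (-1)} = \frac{(-1)^{n_1} + (-1)^{n_2}}{2} = B
\end{align*}
(equivalently, by pairing consecutive terms, or a one-line induction on $n_2$), while the right side is $\tfrac{A+B}{2} - \tfrac{A-B}{2} = B$. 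Both cases agree, which establishes the identity; specializing $s = \kronsym{r}{p}$ and unwinding the definitions of $A$ and $B$ then yields the stated formula.

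The argument is entirely routine: no substantial input is required (in particular Lemma~\ref{lem:sumLegendre} is not needed here, despite the superficial resemblance). The only point demanding a little care is the bookkeeping of the reindexing $\tau \mapsto k - \tau$ and verifying that the two combined expressions $\tfrac{A+B}{2}$ and $\tfrac{A-B}{2}$ match, term for term, the two parenthesized quantities appearing on the right-hand side of the lemma.
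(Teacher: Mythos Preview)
Your proof is correct. Both you and the paper begin with the same reindexing $\tau \mapsto k-\tau$, reducing to $\sum_{j=n_1}^{n_2} s^j$ with $s=\kronsym{r}{p}\in\{\pm1\}$. The organizational difference is that the paper splits the sum by the parity of $j$ (using $s^2=1$) and then \emph{derives} the two coefficients by counting the even and odd integers in $\{n_1,\dots,n_2\}$, whereas you split by the value of $s$ and \emph{verify} the identity in each of the two cases $s=\pm1$. Your route is slightly quicker as a check of the stated formula; the paper's route explains where the expressions $\tfrac{n_2-n_1+1}{2}\pm\tfrac{(-1)^{n_1}+(-1)^{n_2}}{4}$ come from (they are precisely the counts of even and odd integers in the interval). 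Either way the content is the same elementary computation.
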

\begin{proof}
Let $\tau_1 = k - \tau$. Then
\begin{align*}
\sum_{\tau = k - n_2}^{k - n_1} \kronsym{r}{p}^{k-\tau} &= \sum_{\tau_1 = n_1}^{n_2} \kronsym{r}{p}^{\tau_1} = \sum_{\substack{\tau_1 = n_1 \\ \tau_1 \text{ is even}}}^{n_2} 1 + \sum_{\substack{\tau_1 = n_1 \\ \tau_1 \text{ is odd}}}^{n_2} \kronsym{r}{p} .
\end{align*}
By a simple counting argument, one can see that the number of even integers in the set $\{ n \in \ZZ : n_1 \le n \le n_2 \}$ is $\frac{n_2 - n_1 + 1}{2} + \frac{(-1)^{n_1} + (-1)^{n_2}}{4}$. In a similar manner, we can see that the number of odd integers in the set $\{ n \in \ZZ : n_1 \le n \le n_2 \}$ is $\frac{n_2 - n_1 + 1}{2} - \frac{(-1)^{n_1} + (-1)^{n_2}}{4}$. 
From this, the lemma follows.
\end{proof}

\section{Proof of Theorem~\ref{thm:densities}} \label{section:Proof} % Proof of Main Theorem
Since we have stated some supporting lemmas in the previous section, we can now give a proof of Theorem~\ref{thm:densities} in this section. This section is split up into a proof of the formulas of $\alpha_p (m, Q)$ when $m \ne 0$ and a proof of the formulas for $\alpha_p (0, Q)$.
In this section, let $a$, $b$, $b_0$, $b_1$, $c$, $c_0$, $c_1$, and $p$ be the same as in the statement of Theorem~\ref{thm:densities}.

\subsection{Proof of Formulas for $\alpha_p (m, Q)$ for Nonzero $m$} % m nonzero
This subsection proves the formulas of $\alpha_p (m, Q)$ when $m \ne 0$. In this subsection, let $m$, $m_0$, and $m_1$ be the same as in the statement of Theorem~\ref{thm:densities}. Suppose $k > m_1$ for this entire subsection and its lemmas. Towards computing $\alpha_p (m, Q)$, we compute $r_{p^k}(m , Q)$ and then take the appropriate limit.

By \eqref{eq:rpkQm 0+other},
\begin{align*}
r_{p^k}(m , Q) &=  p^{2k} + \frac{1}{p^k} \sum_{t=1}^{p^k - 1} \e{\frac{-m_0 p^{m_1} t}{p^k}} \gausssum{a t}{p^k} \gausssum{b_0 p^{b_1} t}{p^k} \gausssum{c_0 p^{c_1} t}{p^k} .
\end{align*}
Let $t = t_0 p^\tau$, where $0 \le \tau \le k-1$ and $t_0 \in {(\ZZ/p^{k-\tau}\ZZ)}^*$. Then
\begin{align}
r_{p^k}(m , Q) &=  p^{2k} + \frac{1}{p^k} \sum_{\tau=0}^{k-1} s_{k,\tau} , \label{eq:sum sktau}
\end{align}
where 
\begin{align*}
s_{k,\tau} &= \sum_{t_0 \in {(\ZZ/p^{k-\tau}\ZZ)}^*}  \e{\frac{-m_0 t_0 p^{m_1+\tau}}{p^k}} \gausssum{a t_0 p^\tau}{p^k} \gausssum{b_0 t_0 p^{b_1+\tau}}{p^k} \gausssum{c_0 t_0 p^{c_1+\tau}}{p^k} .
\end{align*}

Now the problem of computing $r_{p^k}(m, Q)$ becomes the problem of computing $s_{k,\tau}$ for $0 \le \tau \le k-1$. Our first lemma tells us that we only need to be concerned with $m_1 + 1$ values of $\tau$.

\begin{lemma} \label{lem:sktau tau0tok-m1-2}
If $0 \le \tau \le k - m_1 - 2$, then $s_{k,\tau} = 0$.
\end{lemma}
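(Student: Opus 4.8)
The plan is to plug the Gauss-sum evaluation of Lemma~\ref{lem:gausssumppows} into $s_{k,\tau}$, observe that the resulting product of Gauss sums depends on the summation variable $t_0$ only through $t_0 \bmod p$, and then show that the remaining exponential sum over $t_0$ vanishes. The hypothesis $\tau \le k - m_1 - 2$ enters exactly once: it ensures that this exponential has modulus $p^{k-m_1-\tau} \ge p^2$, which is what makes the sum collapse.

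For the first step, note that $\tau \le k-m_1-2 < k$, so, since $p \nmid a$ and $t_0$ is a unit, the integer $a t_0 p^{\tau}$ has $p$-adic valuation $\tau < k$ and Lemma~\ref{lem:gausssumppows} expresses $\gausssum{a t_0 p^{\tau}}{p^k}$ as a $t_0$-independent factor times $\kronsym{a t_0}{p^{k-\tau}} = \kronsym{a}{p}^{k-\tau}\kronsym{t_0}{p}^{k-\tau}$. The $b$- and $c$-Gauss sums have integer parameters $b_0 t_0 p^{b_1+\tau}$ and $c_0 t_0 p^{c_1+\tau}$, of valuations $b_1+\tau$ and $c_1+\tau$; when such a valuation is at least $k$ the Gauss sum is the constant $p^k$, and when it is less than $k$ Lemma~\ref{lem:gausssumppows} again gives a $t_0$-independent factor times $\kronsym{t_0}{p}^{k-b_1-\tau}$ (resp.\ $\kronsym{t_0}{p}^{k-c_1-\tau}$). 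Multiplying the three together, I would conclude that $\gausssum{a t_0 p^{\tau}}{p^k}\gausssum{b_0 t_0 p^{b_1+\tau}}{p^k}\gausssum{c_0 t_0 p^{c_1+\tau}}{p^k} = C\,\kronsym{t_0}{p}^{e}$ for some constant $C$ independent of $t_0$ and some integer $e \ge 0$; in particular this product is constant on each residue class of $t_0$ modulo $p$.

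For the second step, since $m_1+\tau < k$ I would rewrite $\e{-m_0 t_0 p^{m_1+\tau}/p^k} = \e{-m_0 t_0/p^{j}}$ with $j := k - m_1 - \tau$, where now $j \ge 2$ by hypothesis, so that $s_{k,\tau} = C\sum_{t_0 \in (\ZZ/p^{k-\tau}\ZZ)^*}\e{-m_0 t_0/p^{j}}\kronsym{t_0}{p}^{e}$. Splitting this sum according to the residue $r \in (\ZZ/p\ZZ)^*$ of $t_0$ modulo $p$ (on each such class $\kronsym{t_0}{p}^e$ is the constant $\kronsym{r}{p}^e$), it then suffices to show that $\sum_{0 \le t_0 < p^{k-\tau},\ t_0 \equiv r\,(p)}\e{-m_0 t_0/p^{j}} = 0$. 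Writing $t_0 = r + pv$ with $0 \le v < p^{k-\tau-1}$ factors this as $\e{-m_0 r/p^{j}}\sum_{v=0}^{p^{k-\tau-1}-1}\e{-m_0 v/p^{j-1}}$; the $v$-sum is a multiple of $\sum_{v \bmod p^{j-1}}\e{-m_0 v/p^{j-1}}$ by Lemma~\ref{lem:sumperiodic}, and that last sum is $0$ by Lemma~\ref{lem:sumelin} because $j - 1 \ge 1$ and $p \nmid m_0$. Hence $s_{k,\tau} = 0$.

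The only genuinely fiddly part is the Gauss-sum bookkeeping in the first step — tracking which of $b_1 + \tau$ and $c_1 + \tau$ reach $k$ — but this is not conceptually hard, since in every case the product of the three Gauss sums turns out to be constant on residue classes modulo $p$, and the cancellation in the second step is then uniform. Incidentally, carrying out that cancellation by grouping $t_0$ into residue classes modulo $p$ (rather than completing the sum to all of $\ZZ/p^{k-\tau}\ZZ$ and invoking Lemma~\ref{lem:sumeLegendre}) avoids any separate treatment of the case $e = 0$.
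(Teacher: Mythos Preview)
Your proof is correct and follows essentially the same approach as the paper: decompose $t_0$ according to its residue modulo $p$, observe that the product of Gauss sums is constant on each residue class, and then show that the remaining exponential sum in the complementary variable vanishes via Lemmas~\ref{lem:sumperiodic} and~\ref{lem:sumelin}. The only cosmetic difference is that the paper invokes Lemma~\ref{lem:gausssumperiodic} directly to see that the Gauss sums depend on $t_0$ only modulo $p$, whereas you reach the same conclusion by fully evaluating them with Lemma~\ref{lem:gausssumppows}; either route works.
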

\begin{proof}
Suppose that $0 \le \tau \le k - m_1 - 2$. Then let $t_0 = t_1 + t_2 p$, where $1 \le t_1 \le p-1$ and $0 \le t_2 \le p^{k-\tau-1} -1$, so by Lemma~\ref{lem:gausssumperiodic},
\begin{align*}
s_{k,\tau} &= \sum_{t_1 = 1}^{p-1} \sum_{t_2 = 0}^{p^{k-\tau-1} -1} \e{\frac{-m_0 (t_1 + t_2 p)  p^{m_1+\tau}}{p^k}} \gausssum{a (t_1 + t_2 p)  p^\tau}{p^k} \\
		&\qquad\qquad\qquad \cdot \gausssum{b_0 (t_1 + t_2 p)  p^{b_1+\tau}}{p^k} \gausssum{c_0 (t_1 + t_2 p)  p^{c_1+\tau}}{p^k} \\
	&= \sum_{t_1 = 1}^{p-1} \sum_{t_2 = 0}^{p^{k-\tau-1} -1} \e{\frac{-m_0 t_1}{p^{k - m_1 - \tau}}} \e{\frac{-m_0 t_2}{p^{k - m_1 - 1 - \tau}}} \gausssum{a t_1 p^\tau}{p^k} \\
		&\qquad\qquad\qquad \cdot \gausssum{b_0 t_1  p^{b_1+\tau}}{p^k} \gausssum{c_0 t_1 p^{c_1+\tau}}{p^k} \\
	&= \sum_{t_1 = 1}^{p-1} \e{\frac{-m_0 t_1}{p^{k - m_1 - \tau}}} \gausssum{a t_1 p^\tau}{p^k}  \gausssum{b_0 t_1  p^{b_1+\tau}}{p^k} \\
		&\qquad\qquad\qquad \cdot \gausssum{c_0 t_1 p^{c_1+\tau}}{p^k} \sum_{t_2 = 0}^{p^{k-\tau-1} -1} \e{\frac{-m_0 t_2}{p^{k - m_1 - 1 - \tau}}}  . \numberthis \label{eq:sum sktau 0 to k-m1-2}
\end{align*}
By Lemmas~\ref{lem:sumperiodic} and \ref{lem:sumelin} and the condition $0 \le \tau \le k - m_1 - 2$,
\begin{align*}
\sum_{t_2 = 0}^{p^{k-\tau-1} -1}  \e{\frac{-m_0 t_2}{p^{k - m_1 - 1 - \tau}}} &= p^{m_1} \sum_{t_2 = 0}^{p^{k-m_1-\tau-1} -1}  \e{\frac{-m_0 t_2}{p^{k - m_1 - 1 - \tau}}} = p^{m_1}  \cdot 0 = 0.
\end{align*}
Thus, by \eqref{eq:sum sktau 0 to k-m1-2}, 
we obtain $s_{k,\tau} = 0$ for $0 \le \tau \le k - m_1 - 2$.
\end{proof}

In our next lemma, we evaluate $s_{k,\tau}$ when $\tau$ is greater than or equal to $k - \min(m_1, b_1)$.
\begin{lemma} \label{lem:sktau taumintok-1}
If $k - \min(m_1, b_1) \le \tau \le k-1$, then 
\begin{align*}
s_{k,\tau} = 
\begin{cases}
p^{3k + (k-\tau)/2} \left(1 - \dfrac{1}{p}\right), &\text{if $k - \tau$ is even,} \\
0, &\text{if $k - \tau$ is odd.}
\end{cases}
\end{align*}
\end{lemma}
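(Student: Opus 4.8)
The plan is to evaluate the four factors in the summand of $s_{k,\tau}$ one at a time, exploiting the hypothesis $k-\tau\le\min(m_1,b_1)$, which gives both $k-\tau\le m_1$ and $k-\tau\le b_1\le c_1$. First I would dispose of the exponential: since $t_0$ is a unit modulo $p^{k-\tau}$ and $m_1+\tau\ge m_1+(k-m_1)=k$, the quantity $m_0 t_0 p^{m_1+\tau}/p^k$ is an integer, so $\e{-m_0 t_0 p^{m_1+\tau}/p^k}=1$. Next, the arguments of $\gausssum{b_0 t_0 p^{b_1+\tau}}{p^k}$ and $\gausssum{c_0 t_0 p^{c_1+\tau}}{p^k}$ have $p$-adic valuation $b_1+\tau\ge k$ and $c_1+\tau\ge k$ respectively (here $b_0 t_0$ and $c_0 t_0$ are units), so the first case of Lemma~\ref{lem:gausssumppows} applies and each Gauss sum equals $p^k$. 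Finally, the argument of $\gausssum{a t_0 p^\tau}{p^k}$ has valuation exactly $\tau\le k-1<k$ because $p\nmid a$ and $t_0$ is a unit, so the second case of Lemma~\ref{lem:gausssumppows} gives $\gausssum{a t_0 p^\tau}{p^k}=p^{(k+\tau)/2}\kronsym{a t_0}{p^{k-\tau}}\varepsilon_{p^{k-\tau}}$.

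Assembling these evaluations and pulling the $t_0$-independent factors out, together with the multiplicativity of the Jacobi symbol and $\kronsym{a}{p^{k-\tau}}=\kronsym{a}{p}^{k-\tau}$, yields
\[
s_{k,\tau}=p^{2k+(k+\tau)/2}\,\varepsilon_{p^{k-\tau}}\kronsym{a}{p}^{k-\tau}\sum_{t_0\in(\ZZ/p^{k-\tau}\ZZ)^*}\kronsym{t_0}{p}^{k-\tau}.
\]
Now I would invoke Lemma~\ref{lem:sumLegendre} (with exponent $k-\tau$ and modulus $p^{k-\tau}$) and split on the parity of $k-\tau$. If $k-\tau$ is odd the character sum vanishes, so $s_{k,\tau}=0$. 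If $k-\tau$ is even the character sum equals $p^{k-\tau}\left(1-\frac{1}{p}\right)$; moreover $\kronsym{a}{p}^{k-\tau}=1$ and, since $p$ is odd, $p^{k-\tau}\equiv1\pmod 4$, so $\varepsilon_{p^{k-\tau}}=1$ by Lemma~\ref{lem:gausssumppowsrelprime}. Collecting the powers of $p$ via $2k+(k+\tau)/2+(k-\tau)=3k+(k-\tau)/2$ gives exactly the claimed value.

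There is no deep obstacle in this lemma: the only real care needed is matching the three different $p$-adic valuation regimes of the Gauss-sum arguments against $k$ so that the correct branch of Lemma~\ref{lem:gausssumppows} is used in each case, and then tracking the exponents of $p$ (along with the observation that the nuisance factors $\varepsilon_{p^{k-\tau}}$ and $\kronsym{a}{p}^{k-\tau}$ are both trivial precisely in the surviving case $k-\tau$ even). The hypothesis $k-\tau\le\min(m_1,b_1)$ is used exactly to collapse the exponential to $1$ and the $b$- and $c$-Gauss sums to $p^k$, so that all $t_0$-dependence funnels through the single Gauss sum $\gausssum{a t_0 p^\tau}{p^k}$, which is what produces the Legendre-character sum evaluated by Lemma~\ref{lem:sumLegendre}.
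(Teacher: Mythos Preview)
Your proof is correct and follows essentially the same route as the paper: evaluate the exponential and the $b$- and $c$-Gauss sums via the hypothesis, reduce to the single $a$-Gauss sum via Lemma~\ref{lem:gausssumppows}, and then apply Lemma~\ref{lem:sumLegendre} to the resulting character sum. The paper's write-up is slightly terser (it absorbs $p^{2k}$ directly rather than tracking the two trivial Gauss sums separately), but the argument is the same.
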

\begin{proof}
Suppose that $k - \min(m_1, b_1) \le \tau \le k-1$. Then $p^k \mid -m_0 t_0 p^{m_1+\tau}$ for all $t_0 \in {(\ZZ/p^{k-\tau}\ZZ)}^*$ and $k \le b_1 + \tau \le c_1 + \tau$. Therefore, for all $t_0 \in {(\ZZ/p^{k-\tau}\ZZ)}^*$, we have $\e{\frac{-m_0 t_0 p^{m_1+\tau}}{p^k}} = 1$. By Lemma~\ref{lem:gausssumppows},
\begin{align*}
s_{k,\tau} &=\sum_{t_0 \in {(\ZZ/p^{k-\tau}\ZZ)}^*}  p^{(k+\tau)/2} \kronsym{a t_0}{p^{k-\tau}} \varepsilon_{p^{k-\tau}} p^{2k} \\
	&= \varepsilon_{p^{k-\tau}} p^{5 k/2 + \tau/2}\kronsym{a}{p}^{k-\tau}  \sum_{t_0 \in {(\ZZ/p^{k-\tau}\ZZ)}^*}  \kronsym{t_0}{p} ^{k-\tau}.
\end{align*}
By Lemma~\ref{lem:sumLegendre},
\begin{align*}
s_{k,\tau} &=
\begin{cases}
\varepsilon_{p^{k-\tau}} p^{5 k/2 + \tau/2}\kronsym{a}{p}^{k-\tau}  p^{k-\tau} \left(1 - \dfrac{1}{p}\right), &\text{if $k-\tau$ is even,} \\
0, &\text{if $k-\tau$ is odd.}
\end{cases}
\end{align*}
Because $\varepsilon_{p^{k-\tau}} = \kronsym{a}{p}^{k-\tau} = 1$ if $k-\tau$ is even, we have 
\begin{align*}
s_{k,\tau} &=
\begin{cases}
p^{3k + (k-\tau)/2}\left(1 - \dfrac{1}{p}\right), &\text{if $k-\tau$ is even,} \\
0, &\text{if $k-\tau$ is odd.}
\end{cases}
\end{align*}
\end{proof}

The following lemma gives the value of the sum $\sum_{\tau = k - \min(m_1, b_1)}^{k - 1}  s_{k,\tau}$.
\begin{lemma} \label{lem:sktausum tau-n1tok-1}
Let $n_1 = \min(m_1, b_1)$. 
Then
\begin{align} \label{eq:sktausum tau-n1tok-1}
\sum_{\tau = k - n_1}^{k - 1}  s_{k,\tau} = \sum_{\substack{\tau = k - n_1 \\ k-\tau \text{ is even}}}^{k - 1} p^{3k + (k-\tau)/2} \left(1 - \dfrac{1}{p}\right) = p^{3k}  ( p^{\lfloor n_1 / 2 \rfloor} -1)  ,
\end{align}
where $\lfloor x \rfloor$ is the greatest integer less than or equal to $x$.
\end{lemma}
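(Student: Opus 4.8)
The plan is to apply Lemma~\ref{lem:sktau taumintok-1} termwise and then evaluate a geometric series. Since $n_1 = \min(m_1, b_1)$, the index range $k - n_1 \le \tau \le k - 1$ is exactly the range $k - \min(m_1, b_1) \le \tau \le k-1$ covered by Lemma~\ref{lem:sktau taumintok-1}. Hence every summand with $k - \tau$ odd vanishes, and every summand with $k - \tau$ even equals $p^{3k + (k-\tau)/2}\left(1 - \frac{1}{p}\right)$; this yields the first equality in \eqref{eq:sktausum tau-n1tok-1} at once.

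For the second equality, I would reindex by setting $j = (k-\tau)/2$. As $\tau$ runs over the integers in $[k - n_1,\, k-1]$ with $k - \tau$ even, the quantity $k - \tau$ runs over the even integers in $\{1, 2, \ldots, n_1\}$, namely $2, 4, \ldots, 2\lfloor n_1/2 \rfloor$, so $j$ runs over $1, 2, \ldots, \lfloor n_1/2 \rfloor$. Therefore
\begin{align*}
\sum_{\substack{\tau = k - n_1 \\ k - \tau \text{ is even}}}^{k-1} p^{3k + (k-\tau)/2}\left(1 - \frac{1}{p}\right) = p^{3k}\left(1 - \frac{1}{p}\right) \sum_{j=1}^{\lfloor n_1/2 \rfloor} p^{j}.
\end{align*}
Summing the geometric series via $\sum_{j=1}^{\lfloor n_1/2 \rfloor} p^{j} = \frac{p(p^{\lfloor n_1/2 \rfloor} - 1)}{p - 1}$ and using the identity $\left(1 - \frac{1}{p}\right)\cdot\frac{p}{p-1} = 1$ gives $p^{3k}\left(p^{\lfloor n_1/2\rfloor} - 1\right)$, as claimed. (When $n_1 = 0$ the $\tau$-sum is empty, and when $n_1 = 1$ its only term, $\tau = k-1$, has $k - \tau = 1$ odd and so contributes nothing; in both cases $p^{\lfloor n_1/2\rfloor} - 1 = 0$, consistent with the formula.)

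There is no genuine obstacle here. The only points requiring care are verifying that the hypothesis range of Lemma~\ref{lem:sktau taumintok-1} matches the summation range (it does, by the definition of $n_1$, and $n_1 \le m_1 < k$ so the range is legitimate), and pinning down the endpoints of the reindexed sum — in particular that $\tau = k-1$ contributes nothing since $k - \tau = 1$ is odd, and that the largest even value attained by $k - \tau$ is $2\lfloor n_1/2 \rfloor$.
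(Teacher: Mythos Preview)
Your proof is correct and follows essentially the same approach as the paper: apply Lemma~\ref{lem:sktau taumintok-1} termwise to obtain the first equality, then reindex via $j=(k-\tau)/2$ (the paper uses $\tau_1$) and sum the resulting geometric series. Your added check of the degenerate cases $n_1\in\{0,1\}$ is a nice touch beyond what the paper writes.
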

\begin{proof}
The first equality in \eqref{eq:sktausum tau-n1tok-1} follows from Lemma~\ref{lem:sktau taumintok-1}.

It remains to prove the second equality in \eqref{eq:sktausum tau-n1tok-1}.
Let $\tau_1 = \frac{k-\tau}{2}$. Then
\begin{align*}
\sum_{\substack{\tau = k - n_1 \\ k-\tau \text{ is even}}}^{k - 1} p^{3k + (k-\tau)/2} \left(1 - \dfrac{1}{p}\right) &= p^{3k}  \left(1 - \dfrac{1}{p}\right) \sum_{\tau_1 = 1}^{\lfloor n_1 / 2 \rfloor} p^{\tau_1} .
\end{align*}
The lemma follows by noting that the latter sum is a geometric sum that equals
\begin{align*}
\dfrac{p ( p^{\lfloor n_1 / 2 \rfloor} -1)}{p - 1} .
\end{align*}
\end{proof}

\subsubsection{$\alpha_p (m, Q)$ when $m_1 < b_1$}
We start with computing formulas for $\alpha_p (m, Q)$ when $m_1 < b_1$ by computing $r_{p^k}(m, Q)$ when $m_1 < b_1$. 
The next lemma helps us compute $r_{p^k}(m, Q)$ when $m_1 < b_1$ by determining the value of $s_{k, k - m_1 - 1}$ when $m_1 < b_1$.
\begin{lemma} \label{lem:sktau m1<b1 tau=k-m1-1}
If $m_1 < b_1$, then
\begin{align*}
s_{k, k - m_1 - 1} = 
\begin{cases}
p^{3k + m_1/2} \kronsym{a m_0}{p}, &\text{if $m_1$ is even,} \\
-p^{3k + m_1/2 - 1/2}, &\text{if $m_1$ is odd.}
\end{cases}
\end{align*}
\end{lemma}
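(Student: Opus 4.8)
The plan is to substitute $\tau = k - m_1 - 1$ directly into the defining expression for $s_{k,\tau}$ and evaluate each factor explicitly using Lemma~\ref{lem:gausssumppows} and the hypothesis $m_1 < b_1 \le c_1$, then apply Lemma~\ref{lem:sumeLegendre} to the one remaining character sum.

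First I would record the relevant data for $\tau = k - m_1 - 1$: the sum over $t_0$ runs over ${(\ZZ/p^{m_1+1}\ZZ)}^*$; since $k - (m_1 + \tau) = 1$, the exponential simplifies to $\e{-m_0 t_0 p^{m_1 + \tau}/p^k} = \e{-m_0 t_0/p}$; and the three Gauss sums have arguments of $p$-valuation $\tau$, $b_1 + \tau$, and $c_1 + \tau$ respectively. Because $m_1 < b_1 \le c_1$, we have $b_1 + \tau = k + (b_1 - m_1 - 1) \ge k$ and likewise $c_1 + \tau \ge k$, so the first case of Lemma~\ref{lem:gausssumppows} gives $\gausssum{b_0 t_0 p^{b_1+\tau}}{p^k} = \gausssum{c_0 t_0 p^{c_1+\tau}}{p^k} = p^k$. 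For the first Gauss sum, $\tau = k - m_1 - 1 < k$, so the second case of Lemma~\ref{lem:gausssumppows} applies and gives $\gausssum{a t_0 p^{\tau}}{p^k} = p^{(k+\tau)/2}\kronsym{a t_0}{p^{m_1+1}}\varepsilon_{p^{m_1+1}}$.

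Next I would pull the $t_0$-independent factors out of the sum, writing $\kronsym{a t_0}{p^{m_1+1}} = \kronsym{a}{p}^{m_1+1}\kronsym{t_0}{p}^{m_1+1}$, so that $s_{k,k-m_1-1}$ equals $p^{2k + (k+\tau)/2}\,\varepsilon_{p^{m_1+1}}\kronsym{a}{p}^{m_1+1}$ times $\sum_{t_0 \in {(\ZZ/p^{m_1+1}\ZZ)}^*} \e{-m_0 t_0/p}\kronsym{t_0}{p}^{m_1+1}$. This last sum is precisely the one computed by Lemma~\ref{lem:sumeLegendre} (with modulus $p^{m_1+1}$, parameter $-m_0$, and exponent $m_1+1$): it is $-p^{m_1}$ when $m_1$ is odd and $p^{m_1 + 1/2}\kronsym{-m_0}{p}\varepsilon_p$ when $m_1$ is even. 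Then I would split on the parity of $m_1$. When $m_1$ is even, $m_1+1$ is odd, so $\varepsilon_{p^{m_1+1}} = \varepsilon_p$ and $\kronsym{a}{p}^{m_1+1} = \kronsym{a}{p}$; using $\varepsilon_p^2 = \kronsym{-1}{p}$ together with $\kronsym{-1}{p}\kronsym{-m_0}{p} = \kronsym{m_0}{p}$ collapses the $\varepsilon$'s, and tallying the exponent $2k + (k+\tau)/2 + m_1 + 1/2 = 3k + m_1/2$ gives $p^{3k + m_1/2}\kronsym{a m_0}{p}$. When $m_1$ is odd, $m_1+1$ is even, so $\varepsilon_{p^{m_1+1}} = 1$ and $\kronsym{a}{p}^{m_1+1} = 1$, and the exponent $2k + (k+\tau)/2 + m_1 = 3k + m_1/2 - 1/2$ gives $-p^{3k + m_1/2 - 1/2}$, matching the claim.

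There is no genuine obstacle here; the only care required is the bookkeeping of the half-integer exponents and, in the even case, correctly combining $\varepsilon_{p^{m_1+1}} = \varepsilon_p$ with the $\varepsilon_p$ produced by Lemma~\ref{lem:sumeLegendre} via $\varepsilon_p^2 = \kronsym{-1}{p}$ to absorb the sign and the $-1$ inside the Legendre symbol — the step most prone to error.
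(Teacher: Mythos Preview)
Your proposal is correct and follows the paper's proof essentially step for step: the paper likewise uses $m_1 < b_1$ to collapse the $b$- and $c$-Gauss sums to $p^k$ via Lemma~\ref{lem:gausssumppows}, evaluates the $a$-Gauss sum the same way, and then applies Lemma~\ref{lem:sumeLegendre} together with $\varepsilon_p^2 = \kronsym{-1}{p}$ to finish the two parity cases.
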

\begin{proof}
Since $m_1 < b_1$, we have $k \le b_1 + k - m_1 - 1 \le c_1 + k - m_1 - 1$. Thus, by Lemma~\ref{lem:gausssumppows}, 
\begin{align*}
s_{k,k - m_1 - 1} &=\sum_{t_0 \in {(\ZZ/p^{m_1 + 1}\ZZ)}^*}  \e{\frac{-m_0 t_0}{p}} p^{(2k - m_1 - 1)/2} \kronsym{a t_0}{p^{m_1 + 1}} \varepsilon_{p^{m_1 + 1}} p^{2k} \\
	&= p^{3k - m_1/2 - 1/2} \varepsilon_{p^{m_1 + 1}} \kronsym{a}{p}^{m_1 + 1} \sum_{t_0 \in {(\ZZ/p^{m_1 + 1}\ZZ)}^*}  \e{\frac{-m_0 t_0}{p}} \kronsym{t_0}{p}^{m_1 + 1} .
\end{align*}
By Lemma~\ref{lem:sumeLegendre},
\begin{align*}
s_{k,k - m_1 - 1} &=
\begin{cases}
 p^{3k - m_1/2 - 1/2} \varepsilon_{p} \kronsym{a}{p} p^{m_1+1/2} \kronsym{-m_0}{p} \varepsilon_p , &\text{if $m_1$ is even,} \\
- p^{3k - m_1/2 - 1/2} p^{m_1}, 	&\text{if $m_1$ is odd,}
\end{cases}\\
&=
\begin{cases}
 p^{3k + m_1/2} \kronsym{a m_0}{p} , &\text{if $m_1$ is even,} \\
- p^{3k + m_1/2 - 1/2} , 	&\text{if $m_1$ is odd,}
\end{cases}
\end{align*}
since $\varepsilon_p^2 = \kronsym{-1}{p}$.
\end{proof}

Using  Lemmas~\ref{lem:sktau tau0tok-m1-2}, \ref{lem:sktausum tau-n1tok-1}, and \ref{lem:sktau m1<b1 tau=k-m1-1}, we can compute $r_{p^k}(m, Q)$ when $m_1 < b_1$. By using the lemmas and \eqref{eq:sum sktau}, we see that if $m_1 < b_1$, then
\begin{align*}
r_{p^k}(m, Q) &=  p^{2k} + \frac{1}{p^k} s_{k, k - m_1 - 1} + \frac{1}{p^k} \sum_{\tau=k - m_1}^{k - 1} s_{k,\tau} \\
	&=
		\begin{cases}
		p^{2k} + p^{2k + m_1/2} \kronsym{a m_0}{p} + p^{2k} (p^{\lfloor m_1 /2 \rfloor} - 1), &\text{if $m_1$ is even,} \\
		p^{2k} - p^{2k + m_1/2 - 1/2} + p^{2k} (p^{\lfloor m_1 /2 \rfloor} - 1), &\text{if $m_1$ is odd,}
		\end{cases} \\
	&=
		\begin{cases}
		p^{2k} \left(1 + p^{m_1/2} \kronsym{a m_0}{p} + (p^{m_1 /2} - 1) \right), &\text{if $m_1$ is even,} \\
		p^{2k} \left(1 - p^{(m_1 - 1)/2} + (p^{(m_1 - 1)/2} - 1) \right), &\text{if $m_1$ is odd.}
		\end{cases} \\
	&=
		\begin{cases}
		p^{2k + m_1/2} \left( 1 + \kronsym{a m_0}{p} \right) , &\text{if $m_1$ is even,} \\
		0, &\text{if $m_1$ is odd.}
		\end{cases}
\end{align*}
By dividing by $p^{2k}$ and computing the limit as $k \to \infty$, we obtain \eqref{eq:density m1<b1} if $m_1 < b_1$.

\subsubsection{$\alpha_p (m, Q)$ when $b_1 \le m_1 < c_1$}
We start with computing formulas for $\alpha_p (m, Q)$ when $b_1 \le m_1 < c_1$ by computing $r_{p^k}(m, Q)$ when $b_1 \le m_1 < c_1$. 
The next lemmas help us compute  $r_{p^k}(m, Q)$ when $b_1 \le m_1 < c_1$. We first determine the value of $s_{k, k - m_1 - 1}$ when $b_1 \le m_1 < c_1$.
\begin{lemma} \label{lem:sktau b1<=m1<c1 tau=k-m1-1}
If $b_1 \le m_1 < c_1$, then
\begin{align*}
s_{k, k - m_1 - 1} = 
\begin{cases}
- p^{3k + b_1/2 - 1} \kronsym{-a b_0}{p}^{m_1 + 1} , &\text{if $b_1$ is even,}  \medskip\\ 
p^{3k + (b_1 - 1)/2} \kronsym{a}{p}^{m_1 + 1} \kronsym{b_0}{p}^{m_1} \kronsym{m_0}{p} , &\text{if $b_1$ is odd.}
\end{cases}
\end{align*}
\end{lemma}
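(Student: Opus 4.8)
My plan is to follow the same template as the proof of Lemma~\ref{lem:sktau m1<b1 tau=k-m1-1}. First I would put $\tau = k - m_1 - 1$ (a legitimate index since $k > m_1$) into the definition of $s_{k,\tau}$; then $k - \tau = m_1 + 1$, the variable $t_0$ ranges over ${(\ZZ/p^{m_1+1}\ZZ)}^*$, and the exponential factor simplifies to $\e{-m_0 t_0/p}$, exactly as in the $m_1 < b_1$ case. The real content is to read off the three Gauss sums from Lemma~\ref{lem:gausssumppows}, where the hypothesis $b_1 \le m_1 < c_1$ dictates which branch applies to each factor.

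Concretely: the exact power of $p$ dividing $a t_0 p^\tau$ is $\tau = k - m_1 - 1 < k$, so $\gausssum{a t_0 p^\tau}{p^k} = p^{(2k - m_1 - 1)/2}\kronsym{a t_0}{p^{m_1+1}}\varepsilon_{p^{m_1+1}}$; since $b_1 \le m_1$, the power of $p$ dividing $b_0 t_0 p^{b_1+\tau}$ is $b_1 + \tau = k - (m_1 - b_1 + 1) < k$, giving $\gausssum{b_0 t_0 p^{b_1+\tau}}{p^k} = p^{(2k + b_1 - m_1 - 1)/2}\kronsym{b_0 t_0}{p^{m_1-b_1+1}}\varepsilon_{p^{m_1-b_1+1}}$; and since $m_1 < c_1$, the power of $p$ dividing $c_0 t_0 p^{c_1+\tau}$ is $c_1 + \tau = k + (c_1 - m_1 - 1) \ge k$, so the first branch of Lemma~\ref{lem:gausssumppows} gives $\gausssum{c_0 t_0 p^{c_1+\tau}}{p^k} = p^k$. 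Factoring out the powers of $p$, the roots of unity $\varepsilon_{p^{m_1+1}}$ and $\varepsilon_{p^{m_1-b_1+1}}$, and the constants $\kronsym{a}{p}$ and $\kronsym{b_0}{p}$ reduces everything to $\sum_{t_0 \in {(\ZZ/p^{m_1+1}\ZZ)}^*}\e{-m_0 t_0/p}\kronsym{t_0}{p}^{2m_1 - b_1 + 2}$, which Lemma~\ref{lem:sumeLegendre} evaluates by the parity of $2m_1 - b_1 + 2$, equivalently the parity of $b_1$.

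To finish I would split into cases. When $b_1$ is even, Lemma~\ref{lem:sumeLegendre} contributes $-p^{m_1}$; the powers of $p$ collect to $p^{3k + b_1/2 - 1}$, the two $\varepsilon$-factors combine to $\varepsilon_{p^{m_1+1}}^2 = \kronsym{-1}{p}^{m_1+1}$ (using $m_1 - b_1 + 1 \equiv m_1 + 1 \pmod{2}$), and the surviving Legendre symbols collapse to $\kronsym{-ab_0}{p}^{m_1+1}$, yielding the first case. When $b_1$ is odd, Lemma~\ref{lem:sumeLegendre} contributes $p^{m_1+1/2}\kronsym{-m_0}{p}\varepsilon_p$; the powers of $p$ collect to $p^{3k + (b_1-1)/2}$, the product $\varepsilon_{p^{m_1+1}}\varepsilon_{p^{m_1-b_1+1}}$ reduces to $\varepsilon_p$ (since $\varepsilon_{p^n}$ depends only on $n \bmod 2$, $m_1 - b_1 + 1 \equiv m_1 \pmod{2}$, and exactly one of $m_1$, $m_1+1$ is even), and $\varepsilon_p^2 = \kronsym{-1}{p}$ converts $\kronsym{-m_0}{p}$ to $\kronsym{m_0}{p}$, leaving $\kronsym{a}{p}^{m_1+1}\kronsym{b_0}{p}^{m_1}\kronsym{m_0}{p}$. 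The only real obstacle is bookkeeping: keeping straight the exponents of $p$, the fourth-roots of unity $\varepsilon_{p^\bullet}$, and the parity conditions on $b_1$ and $m_1$; no idea beyond Lemmas~\ref{lem:gausssumppows} and \ref{lem:sumeLegendre} and the identity $\varepsilon_{p^n}^2 = \kronsym{-1}{p}^n$ is needed.
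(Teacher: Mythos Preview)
Your proposal is correct and follows essentially the same approach as the paper's own proof: both substitute $\tau = k - m_1 - 1$, invoke Lemma~\ref{lem:gausssumppows} on each of the three Gauss sums (the hypothesis $b_1 \le m_1 < c_1$ forcing the $c$-factor to equal $p^k$), factor out constants, and apply Lemma~\ref{lem:sumeLegendre} to the remaining sum, splitting on the parity of $b_1$. The only cosmetic difference is that the paper writes the exponent on $\kronsym{t_0}{p}$ as $-b_1$ rather than your equivalent $2m_1 - b_1 + 2$.
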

\begin{proof}
Since $b_1 \le m_1 < c_1$, we have $b_1 + k - m_1 - 1 < k  \le c_1 + k - m_1 - 1$. Thus, by Lemma~\ref{lem:gausssumppows}, 
\begin{align*}
s_{k, k - m_1 - 1} &=\sum_{t_0 \in {(\ZZ/p^{m_1 + 1}\ZZ)}^*}  \e{\frac{-m_0 t_0}{p}} p^{(2k - m_1 - 1)/2} \kronsym{a t_0}{p^{m_1 + 1}} \varepsilon_{p^{m_1 + 1}} \\
	&\qquad\qquad\qquad\qquad \cdot p^{(2k + b_1 - m_1 - 1)/2} \kronsym{b_0 t_0}{p^{m_1 - b_1 + 1}} \varepsilon_{p^{m_1 - b_1 + 1}} p^k \\
	&= p^{3k + b_1/2 - m_1 - 1} \kronsym{a}{p}^{m_1 + 1} \kronsym{b_0}{p}^{m_1 - b_1 + 1} \\
	&\qquad\qquad \cdot \varepsilon_{p^{m_1 + 1}} \varepsilon_{p^{m_1 - b_1 + 1}} \sum_{t_0 \in {(\ZZ/p^{m_1 + 1}\ZZ)}^*}  \e{\frac{-m_0 t_0}{p}}  \kronsym{t_0}{p}^{- b_1} .
\end{align*}

Suppose $b_1$ is even. Then $\kronsym{b_0}{p}^{- b_1} = \kronsym{t_0}{p}^{- b_1} = 1$ and $\varepsilon_{p^{m_1 - b_1 + 1}} = \varepsilon_{p^{m_1 + 1}}$. Thus,
\begin{align*}
s_{k, k - m_1 - 1} &= p^{3k + b_1/2 - m_1 - 1} \kronsym{a}{p}^{m_1 + 1} \kronsym{b_0}{p}^{m_1 + 1} \varepsilon_{p^{m_1 + 1}}^2 \sum_{t_0 \in {(\ZZ/p^{m_1 + 1}\ZZ)}^*}  \e{\frac{-m_0 t_0}{p}} \\
	&= - p^{3k + b_1/2 - m_1 - 1} \kronsym{-a b_0}{p}^{m_1 + 1} p^{m_1} = - p^{3k + b_1/2 - 1} \kronsym{-a b_0}{p}^{m_1 + 1} 
\end{align*}
by Lemma~\ref{lem:sumeLegendre}.

Now suppose that $b_1$ is odd. Then $\varepsilon_{p^{m_1 + 1}} \varepsilon_{p^{m_1 - b_1 + 1}} = \varepsilon_p$, and 
\begin{align*}
s_{k, k - m_1 - 1} &= p^{3k + b_1/2 - m_1 - 1} \kronsym{a}{p}^{m_1 + 1} \kronsym{b_0}{p}^{m_1} \varepsilon_p \sum_{t_0 \in {(\ZZ/p^{m_1 + 1}\ZZ)}^*}  \e{\frac{-m_0 t_0}{p}}  \kronsym{t_0}{p} \\
	&= p^{3k + b_1/2 - m_1 - 1} \kronsym{a}{p}^{m_1 + 1} \kronsym{b_0}{p}^{m_1} \varepsilon_p^2 p^{m_1 + 1/2} \kronsym{-m_0}{p}
\end{align*}
by Lemma~\ref{lem:sumeLegendre}. Thus, if $b_1$ is odd, then 
\begin{align*}
s_{k, k - m_1 - 1} &= p^{3k + (b_1 - 1)/2} \kronsym{a}{p}^{m_1 + 1} \kronsym{b_0}{p}^{m_1} \kronsym{m_0}{p} .
\end{align*}
\end{proof}

The next lemma gives a formula for $s_{k, \tau}$ when $b_1 \le m_1 < c_1$ and $k - m_1 \le \tau < k - b_1$.
\begin{lemma} \label{lem:sktau b1<=m1<c1 tauk-m1tok-b1}
If $b_1 \le m_1 < c_1$ and $k - m_1 \le \tau < k - b_1$, then
\begin{align*}
s_{k, \tau} = 
\begin{cases}
p^{3k + b_1/2} \kronsym{- a b_0}{p}^{k - \tau} \left( 1 - \dfrac{1}{p} \right) &\text{if $b_1$ is even,} \\ \smallskip
0, &\text{if $b_1$ is odd.}
\end{cases}
\end{align*}
\end{lemma}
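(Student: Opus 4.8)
The plan is to insert the closed forms of the three Gauss sums into the definition of $s_{k,\tau}$ and then split on the parity of $b_1$. First observe that the hypothesis $k-m_1\le\tau$ forces $m_1+\tau\ge k$, so $\e{-m_0 t_0 p^{m_1+\tau}/p^k}=1$ for every $t_0\in(\ZZ/p^{k-\tau}\ZZ)^*$ and the exponential factor drops out. Combined with $m_1<c_1$, the same bound gives $c_1+\tau>k$, so Lemma~\ref{lem:gausssumppows} yields $\gausssum{c_0 t_0 p^{c_1+\tau}}{p^k}=p^k$. On the other hand $\tau\le k-1$ and $\tau<k-b_1$ put the valuations $\tau$ of $a t_0 p^\tau$ and $b_1+\tau$ of $b_0 t_0 p^{b_1+\tau}$ strictly below $k$, so Lemma~\ref{lem:gausssumppows} evaluates those two Gauss sums to $p^{(k+\tau)/2}\kronsym{a t_0}{p^{k-\tau}}\varepsilon_{p^{k-\tau}}$ and $p^{(k+b_1+\tau)/2}\kronsym{b_0 t_0}{p^{k-b_1-\tau}}\varepsilon_{p^{k-b_1-\tau}}$ respectively.

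Next I would collect constants: multiplying the four factors, the power of $p$ is $p^{(k+\tau)/2+(k+b_1+\tau)/2+k}=p^{2k+b_1/2+\tau}$, and rewriting every Jacobi symbol as a power of a Legendre symbol via $\kronsym{\cdot}{p^j}=\kronsym{\cdot}{p}^j$, the $t_0$-dependence collapses to $\kronsym{t_0}{p}^{(k-\tau)+(k-b_1-\tau)}$, an exponent congruent to $b_1\pmod 2$. Thus
\begin{align*}
s_{k,\tau} = p^{2k+b_1/2+\tau}\,\varepsilon_{p^{k-\tau}}\varepsilon_{p^{k-b_1-\tau}}\,\kronsym{a}{p}^{k-\tau}\kronsym{b_0}{p}^{k-b_1-\tau}\sum_{t_0\in(\ZZ/p^{k-\tau}\ZZ)^*}\kronsym{t_0}{p}^{(k-\tau)+(k-b_1-\tau)}.
\end{align*}
By Lemma~\ref{lem:sumLegendre} the inner sum is $0$ when $b_1$ is odd, giving $s_{k,\tau}=0$; when $b_1$ is even it equals $p^{k-\tau}(1-1/p)$. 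In the even case one also has $\varepsilon_{p^{k-b_1-\tau}}=\varepsilon_{p^{k-\tau}}$ (the value of $\varepsilon_{p^j}$ depends only on the parity of $j$), so $\varepsilon_{p^{k-\tau}}\varepsilon_{p^{k-b_1-\tau}}=\varepsilon_{p^{k-\tau}}^2=\kronsym{-1}{p}^{k-\tau}$ by the remark following Lemma~\ref{lem:gausssumppows}, and $\kronsym{a}{p}^{k-\tau}\kronsym{b_0}{p}^{k-b_1-\tau}=\kronsym{ab_0}{p}^{k-\tau}$; multiplying everything out gives $p^{3k+b_1/2}(1-1/p)\kronsym{-ab_0}{p}^{k-\tau}$, which is the asserted value.

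The computation is entirely routine; the only thing requiring care is the bookkeeping — checking that the inequalities on $\tau$ land on the correct side ($<$ versus $\le$) of $k$ for each of the three Gauss sums, so that the right branch of Lemma~\ref{lem:gausssumppows} applies, and tracking the $\varepsilon_{p^j}$ factors and the parities of the several exponents through the consolidation. I expect no conceptual obstacle.
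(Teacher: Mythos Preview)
Your proposal is correct and follows essentially the same route as the paper's proof: both evaluate the three Gauss sums via Lemma~\ref{lem:gausssumppows} using the inequalities $b_1+\tau<k\le m_1+\tau<c_1+\tau$, pull out the $t_0$-independent factors, and then invoke Lemma~\ref{lem:sumLegendre} on the remaining character sum, splitting on the parity of $b_1$. The only cosmetic difference is that the paper simplifies the exponent on $\kronsym{t_0}{p}$ to $-b_1$ rather than $(k-\tau)+(k-b_1-\tau)$, but the parity conclusion is the same.
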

\begin{proof}
Since $b_1 \le m_1 < c_1$ and $k - m_1 \le \tau < k - b_1$, we have $\tau + b_1 < k \le \tau + m_1 < \tau + c_1$. Therefore, by Lemma~\ref{lem:gausssumppows},
\begin{align*}
s_{k,\tau} &=\sum_{t_0 \in {(\ZZ/p^{k-\tau}\ZZ)}^*}  p^{(k+\tau)/2} \kronsym{a t_0}{p^{k-\tau}} \varepsilon_{p^{k-\tau}} p^{(k + b_1 + \tau)/2} \kronsym{b_0 t_0}{p^{k - b_1 - \tau}} \varepsilon_{p^{k - b_1 - \tau}} p^k \\
	&=p^{2k + \tau + b_1/2} \varepsilon_{p^{k-\tau}} \varepsilon_{p^{k - b_1 - \tau}} \kronsym{a}{p}^{k-\tau} \kronsym{b_0}{p}^{k - b_1 - \tau} \sum_{t_0 \in {(\ZZ/p^{k-\tau}\ZZ)}^*}  \kronsym{t_0}{p}^{- b_1}
\end{align*}
By Lemma~\ref{lem:sumLegendre},
\begin{align*}
s_{k,\tau} &=
\begin{cases}
p^{2k + \tau + b_1/2} \varepsilon_{p^{k-\tau}}^2 \kronsym{a b_0}{p}^{k-\tau} p^{k-\tau} \left( 1 - \dfrac{1}{p} \right), &\text{if $b_1$ is even,} \\
0, &\text{if $b_1$ is odd,}
\end{cases}\\
	&=
\begin{cases}
p^{3k  + b_1/2} \kronsym{- a b_0}{p}^{k-\tau} \left( 1 - \dfrac{1}{p} \right), &\text{if $b_1$ is even,} \\
0, &\text{if $b_1$ is odd.}
\end{cases}
\end{align*}
\end{proof}

Using Lemmas~\ref{lem:sktau tau0tok-m1-2}, \ref{lem:sktausum tau-n1tok-1}, \ref{lem:sktau b1<=m1<c1 tau=k-m1-1}, \ref{lem:sktau b1<=m1<c1 tauk-m1tok-b1}, and \ref{lem:sumLegendrepows}, we can compute $r_{p^k}(m, Q)$ when $b_1 \le m_1 < c_1$. By using the lemmas and \eqref{eq:sum sktau}, we see that if $b_1 \le m_1 < c_1$, then 
\begin{align*}
r_{p^k}(m, Q) &=  p^{2k} + \frac{1}{p^k} s_{k, k - m_1 - 1} + \frac{1}{p^k} \sum_{\tau=k - m_1}^{k - b_1 - 1} s_{k,\tau} + \frac{1}{p^k} \sum_{\tau=k - b_1}^{k - 1} s_{k,\tau} \\
	&=  p^{2k} + \frac{1}{p^k} s_{k, k - m_1 - 1} + \frac{1}{p^k} \sum_{\tau=k - m_1}^{k - b_1 - 1} s_{k,\tau} + p^{2k} (p^{\lfloor b_1 / 2 \rfloor} - 1) \\
	&=
\begin{cases}
p^{2k} - p^{2k + b_1/2 - 1} \kronsym{-a b_0}{p}^{m_1 + 1} \\
	\qquad+ \displaystyle{\sum_{\tau=k - m_1}^{k - b_1 - 1}} p^{2k + b_1/2} \kronsym{- a b_0}{p}^{k - \tau} \left( 1 - \dfrac{1}{p} \right) + p^{2k} (p^{b_1 / 2} - 1), \\
	\qquad\qquad\qquad\qquad\qquad\qquad\qquad\qquad\qquad\qquad\text{if $b_1$ is even,} \\
p^{2k} + p^{2k + (b_1 - 1)/2} \kronsym{a}{p}^{m_1 + 1} \kronsym{b_0}{p}^{m_1} \kronsym{m_0}{p} + p^{2k} (p^{(b_1 - 1) / 2} - 1), \\
	\qquad\qquad\qquad\qquad\qquad\qquad\qquad\qquad\qquad\qquad\text{if $b_1$ is odd.}
\end{cases}\\
	&=
\begin{cases}
p^{2k + b_1 / 2} \left(1 - \dfrac{1}{p} \kronsym{-a b_0}{p}^{m_1 + 1} + \left( 1 - \dfrac{1}{p} \right) \left( \dfrac{m_1 - b_1}{2} \right.\right. \\
	\qquad\left.\left. + \dfrac{(-1)^{m_1} -1}{4} + \kronsym{- a b_0}{p} \left( \dfrac{m_1 - b_1}{2} + \dfrac{1 - (-1)^{m_1}}{4} \right) \right) \right), \\
	\qquad\qquad\qquad\qquad\qquad\qquad\qquad\qquad\qquad\qquad\qquad\text{if $b_1$ is even,} \\
p^{2k + (b_1 - 1) / 2} \left(1 + \kronsym{a}{p}^{m_1 + 1} \kronsym{b_0}{p}^{m_1} \kronsym{m_0}{p} \right), \qquad\text{if $b_1$ is odd.}
\end{cases}
\end{align*}
By dividing by $p^{2k}$ and computing the limit as $k \to \infty$, we obtain \eqref{eq:density b1<=m1<c1} if $b_1 \le m_1 < c_1$.

\subsubsection{$\alpha_p (m, Q)$ when $m_1 \ge c_1$}
We start with computing formulas for $\alpha_p (m, Q)$ when $m_1 \ge c_1$ by computing $r_{p^k}(m, Q)$ when $m_1 \ge c_1$. 
The next lemmas help us compute  $r_{p^k}(m, Q)$ when $m_1 \ge c_1$. We begin with computing $s_{k, k - m_1 - 1}$ when $m_1 \ge c_1$.
\begin{lemma} \label{lem:sktau m1>=c1 tau=k-m1-1}
Suppose $m_1 \ge c_1$.

If $b_1 \equiv c_1 \pmod{2}$, then 
\begin{align*}
s_{k, k - m_1 - 1} = 
\begin{cases}
p^{3k - m_1/2 + (b_1 + c_1)/2 - 1} \kronsym{a m_0}{p} \kronsym{- b_0 c_0}{p}^{b_1 + 1}, &\text{if $m_1$ is even,} \\
- p^{3k - (m_1 + 1)/2 + (b_1 + c_1)/2 - 1} \kronsym{- b_0 c_0}{p}^{b_1} , &\text{if $m_1$ is odd.}
\end{cases}
\end{align*}

If $b_1 \not\equiv c_1 \pmod{2}$, then 
\begin{align*}
s_{k, k - m_1 - 1} = 
\begin{cases}
- p^{3k - m_1/2 + (b_1 + c_1 - 1)/2 - 1} \kronsym{- a}{p} \kronsym{b_0}{p}^{b_1 + 1} \kronsym{c_0}{p}^{b_1} , &\text{if $m_1$ is even,} \smallskip\\
p^{3k - (m_1 + 1)/2 + (b_1 + c_1 - 1)/2} \kronsym{b_0}{p}^{b_1} \kronsym{c_0}{p}^{b_1 + 1} \kronsym{m_0}{p} , &\text{if $m_1$ is odd.}
\end{cases}
\end{align*}
\end{lemma}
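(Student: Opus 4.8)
The plan is to substitute $\tau = k - m_1 - 1$ into the formula \eqref{eq:sum sktau} for $s_{k,\tau}$ and evaluate each factor. Since $p^{m_1 + \tau} = p^{k-1}$, the exponential reduces to $\e{\frac{-m_0 t_0 p^{m_1+\tau}}{p^k}} = \e{\frac{-m_0 t_0}{p}}$. The hypothesis $m_1 \ge c_1 \ge b_1 \ge 0$ guarantees that the arguments $a t_0 p^\tau$, $b_0 t_0 p^{b_1+\tau}$, $c_0 t_0 p^{c_1+\tau}$ of the three Gauss sums modulo $p^k$ have $p$-valuations $\tau = k - m_1 - 1$, $b_1 + \tau = k - (m_1 - b_1) - 1$, and $c_1 + \tau = k - (m_1 - c_1) - 1$, all strictly less than $k$; hence the ``$k > \ell$'' branch of Lemma~\ref{lem:gausssumppows} applies to each, giving
\[
\gausssum{a t_0 p^\tau}{p^k} = p^{(2k - m_1 - 1)/2} \kronsym{a t_0}{p^{m_1+1}} \varepsilon_{p^{m_1+1}},
\]
and similarly with $m_1 + 1$ replaced by $m_1 - b_1 + 1$ for the $b$-sum and by $m_1 - c_1 + 1$ for the $c$-sum.

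Next I would pull every factor independent of $t_0$ outside the sum over $t_0 \in (\ZZ/p^{m_1+1}\ZZ)^*$. The powers of $p$ combine to $p^{3k - 3(m_1+1)/2 + (b_1+c_1)/2}$, the constant Jacobi symbols to $\kronsym{a}{p}^{m_1+1} \kronsym{b_0}{p}^{m_1-b_1+1} \kronsym{c_0}{p}^{m_1-c_1+1}$, and there is a leftover factor $\varepsilon_{p^{m_1+1}} \varepsilon_{p^{m_1-b_1+1}} \varepsilon_{p^{m_1-c_1+1}}$. What remains is $\sum_{t_0 \in (\ZZ/p^{m_1+1}\ZZ)^*} \e{\frac{-m_0 t_0}{p}} \kronsym{t_0}{p}^{e}$, where $e = (m_1+1) + (m_1-b_1+1) + (m_1-c_1+1) \equiv m_1 + b_1 + c_1 + 1 \pmod{2}$. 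Lemma~\ref{lem:sumeLegendre}, applied with modulus $p^{m_1+1}$ and the coprime residue $-m_0$, evaluates this sum as $-p^{m_1}$ when $e$ is even and as $p^{m_1 + 1/2} \kronsym{-m_0}{p} \varepsilon_p$ when $e$ is odd.

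Finally I would split into the four cases according to the parities of $m_1$ and of $b_1 + c_1$ (equivalently, whether $b_1 \equiv c_1 \pmod{2}$), which together determine the parity of $e$. In each case one uses that, for the odd prime $p$, $\varepsilon_{p^j} = \varepsilon_p$ when $j$ is odd and $\varepsilon_{p^j} = 1$ when $j$ is even, together with $\varepsilon_p^2 = \kronsym{-1}{p}$ and $\varepsilon_p^4 = 1$, to collapse the product $\varepsilon_{p^{m_1+1}} \varepsilon_{p^{m_1-b_1+1}} \varepsilon_{p^{m_1-c_1+1}}$ — along with the extra $\varepsilon_p$ contributed by Lemma~\ref{lem:sumeLegendre} in the odd-$e$ cases — into $1$ or $\kronsym{-1}{p}$. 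One then absorbs the surviving $\kronsym{-1}{p}$, when present, and the Jacobi symbols coming from the $t_0$-sum into the constant symbols via multiplicativity, and checks that the half-integer powers of $p$ always recombine into the integer exponents displayed in the statement. This is a routine, if lengthy, verification; the only real obstacle is the bookkeeping, since it is precisely the leftover $\kronsym{-1}{p}$ from $\varepsilon_p^2$ that, e.g., converts $\kronsym{-a m_0}{p}$ into $\kronsym{a m_0}{p}$, so an off-by-one in counting the odd exponents among $\{m_1+1,\, m_1-b_1+1,\, m_1-c_1+1\}$ would flip a sign. No analytic input beyond the lemmas already established is required.
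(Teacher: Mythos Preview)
Your proposal is correct and follows essentially the same route as the paper's proof: apply Lemma~\ref{lem:gausssumppows} to each of the three Gauss sums (using $m_1 \ge c_1 \ge b_1$ to ensure the ``$k>\ell$'' branch), factor out the $t_0$-independent pieces, and evaluate the remaining $\sum_{t_0} \e{-m_0 t_0/p}\kronsym{t_0}{p}^{e}$ via Lemma~\ref{lem:sumeLegendre}. The only cosmetic difference is that the paper first splits on whether $b_1\equiv c_1\pmod 2$ and simplifies the $\varepsilon$-product in each subcase before invoking Lemma~\ref{lem:sumeLegendre}, whereas you keep the general expression and then split into the four parity cases at the end; the bookkeeping is identical.
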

\begin{proof}
Since $m_1 \ge c_1$, we have $b_1 + k - m_1 - 1 \le c_1 + k - m_1 - 1 \le k - 1 < k$. Thus, by Lemma~\ref{lem:gausssumppows}, 
\begin{align*}
s_{k, k - m_1 - 1} &=\sum_{t_0 \in {(\ZZ/p^{m_1 + 1}\ZZ)}^*}  \e{\frac{-m_0 t_0}{p}} p^{(2k - m_1 - 1)/2}  \kronsym{a t_0}{p^{m_1 + 1}} \varepsilon_{p^{m_1 + 1}} p^{(2k + b_1 - m_1 - 1)/2} \\
		&\qquad\qquad \cdot \kronsym{b_0 t_0}{p^{m_1 - b_1 + 1}} \varepsilon_{p^{m_1 - b_1 + 1}} p^{(2k + c_1 - m_1 - 1)/2} \kronsym{c_0 t_0}{p^{m_1 - c_1 + 1}} \varepsilon_{p^{m_1 - c_1 + 1}} \\
	&=  p^{3k - 3 m_1/2 - 3/2 + (b_1 + c_1)/2} \varepsilon_{p^{m_1 + 1}} \varepsilon_{p^{m_1 - b_1 + 1}} \varepsilon_{p^{m_1 - c_1 + 1}} \kronsym{a}{p}^{m_1 + 1}  \kronsym{b_0}{p}^{m_1 - b_1 + 1} \\
		&\qquad\qquad \cdot \kronsym{c_0}{p}^{m_1 - c_1 + 1} \sum_{t_0 \in {(\ZZ/p^{m_1 + 1}\ZZ)}^*} \e{\frac{-m_0 t_0}{p}} \kronsym{t_0}{p}^{m_1 - b_1 - c_1 + 1} .
\end{align*}

Suppose that $b_1 \equiv c_1 \pmod{2}$. Then 
\begin{align*}
s_{k, k - m_1 - 1} &=  p^{3k - 3 m_1/2 - 3/2 + (b_1 + c_1)/2} \varepsilon_{p^{m_1 + 1}} \varepsilon_{p^{m_1 - b_1 + 1}}^2 \kronsym{a}{p}^{m_1 + 1}  \kronsym{b_0}{p}^{m_1 - b_1 + 1} \\
		&\qquad\qquad \cdot \kronsym{c_0}{p}^{m_1 - b_1 + 1} \sum_{t_0 \in {(\ZZ/p^{m_1 + 1}\ZZ)}^*} \e{\frac{-m_0 t_0}{p}} \kronsym{t_0}{p}^{m_1 + 1} \\
	&=  p^{3k - 3 m_1/2 - 3/2 + (b_1 + c_1)/2} \varepsilon_{p^{m_1 + 1}} \kronsym{a}{p}^{m_1 + 1}  \kronsym{- b_0 c_0}{p}^{m_1 - b_1 + 1} \\
		&\qquad\qquad \cdot \sum_{t_0 \in {(\ZZ/p^{m_1 + 1}\ZZ)}^*} \e{\frac{-m_0 t_0}{p}} \kronsym{t_0}{p}^{m_1 + 1} .
\end{align*}
By Lemma~\ref{lem:sumeLegendre},
\begin{align*}
s_{k, k - m_1 - 1} &=  
\begin{cases}
p^{3k - 3 m_1/2 - 3/2 + (b_1 + c_1)/2} \varepsilon_{p} \kronsym{a}{p} \kronsym{- b_0 c_0}{p}^{b_1 + 1} p^{m_1 + 1/2} \kronsym{-m_0}{p} \varepsilon_{p} ,	\\
	\qquad\qquad\qquad\qquad\qquad\qquad\qquad\qquad\qquad\qquad\text{if $m_1$ is even,} \smallskip\\
- p^{3k - 3 m_1/2 - 3/2 + (b_1 + c_1)/2} \kronsym{- b_0 c_0}{p}^{b_1} p^{m_1}  ,	\qquad\text{if $m_1$ is odd,}
\end{cases}
\end{align*}
which simplifies to what is stated in Lemma~\ref{lem:sktau m1>=c1 tau=k-m1-1} for $b_1 \equiv c_1 \pmod{2}$.

Now suppose $b_1 \not\equiv c_1 \pmod{2}$. Then $\varepsilon_{p^{m_1 + 1}} \varepsilon_{p^{m_1 - b_1 + 1}} \varepsilon_{p^{m_1 - c_1 + 1}} = \varepsilon_{p^{m_1 + 1}}^2 \varepsilon_{p^{m_1}} = \kronsym{-1}{p}^{m_1 + 1}\varepsilon_{p^{m_1}}$ and 
\begin{align*}
s_{k, k - m_1 - 1} &= p^{3k - 3 m_1/2 - 3/2 + (b_1 + c_1)/2}  \kronsym{-1}{p}^{m_1 + 1}\varepsilon_{p^{m_1}} \kronsym{a}{p}^{m_1 + 1}  \kronsym{b_0}{p}^{m_1 - b_1 + 1} \\
		&\qquad\qquad \cdot \kronsym{c_0}{p}^{m_1 - b_1} \sum_{t_0 \in {(\ZZ/p^{m_1 + 1}\ZZ)}^*} \e{\frac{-m_0 t_0}{p}} \kronsym{t_0}{p}^{m_1} \\
	&= p^{3k - 3 m_1/2 + (b_1 + c_1 - 1)/2 - 1}  \varepsilon_{p^{m_1}} \kronsym{- a}{p}^{m_1 + 1}  \kronsym{b_0}{p}^{m_1 + b_1 + 1} \\
		&\qquad\qquad \cdot \kronsym{c_0}{p}^{m_1 + b_1} \sum_{t_0 \in {(\ZZ/p^{m_1 + 1}\ZZ)}^*} \e{\frac{-m_0 t_0}{p}} \kronsym{t_0}{p}^{m_1} .
\end{align*}
By Lemma~\ref{lem:sumeLegendre},
\begin{align*}
s_{k, k - m_1 - 1} &=  
\begin{cases}
- p^{3k - 3 m_1/2 + (b_1 + c_1 - 1)/2 - 1}  \kronsym{- a}{p}  \kronsym{b_0}{p}^{b_1 + 1} \kronsym{c_0}{p}^{b_1} p^{m_1} ,	\\
	\qquad\qquad\qquad\qquad\qquad\qquad\qquad\qquad\qquad\qquad\text{if $m_1$ is even,} \smallskip\\
p^{3k - 3 m_1/2 + (b_1 + c_1 - 1)/2 - 1}  \varepsilon_{p} \kronsym{b_0}{p}^{b_1} \kronsym{c_0}{p}^{b_1 + 1} p^{m_1 + 1/2} \kronsym{-m_0}{p} \varepsilon_{p}  ,	\\
	\qquad\qquad\qquad\qquad\qquad\qquad\qquad\qquad\qquad\qquad\text{if $m_1$ is odd,}
\end{cases}
\end{align*}
which simplifies to what is stated in Lemma~\ref{lem:sktau m1>=c1 tau=k-m1-1} for $b_1 \not\equiv c_1 \pmod{2}$.
\end{proof}

The next lemma gives the value of $s_{k, \tau}$ when $m_1 \ge c_1$ and $k - m_1 \le \tau < k - c_1$.
\begin{lemma} \label{lem:sktau m1>=c1 tauk-m1tok-c1}
If $m_1 \ge c_1$ and $k - m_1 \le \tau < k - c_1$, then
\begin{align*}
s_{k, \tau} &= 
\begin{cases}
p^{3k - (k - \tau)/2 + (b_1 + c_1)/2} \kronsym{- b_0 c_0}{p}^{b_1} \left( 1 - \dfrac{1}{p} \right) ,	\\
	\qquad\qquad\qquad\qquad\qquad\qquad\qquad\text{if $b_1 \equiv c_1 \pmod{2}$ and $k - \tau$ is even,} \smallskip\\
p^{3k + (b_1 + c_1 - (k - \tau))/2} \kronsym{- a}{p} \kronsym{b_0}{p}^{b_1 + 1} \kronsym{c_0}{p}^{b_1} \left( 1 - \dfrac{1}{p} \right) ,	\\
	\qquad\qquad\qquad\qquad\qquad\qquad\qquad\text{if $b_1 \not\equiv c_1 \pmod{2}$ and $k - \tau$ is odd,} \\
0, \qquad\qquad\qquad\qquad\qquad\qquad\quad\text{otherwise.}
\end{cases}
\end{align*}
\end{lemma}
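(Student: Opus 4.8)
The plan is to evaluate $s_{k,\tau}$ by substituting Lemma~\ref{lem:gausssumppows} directly into its definition and then summing the resulting Legendre-symbol character sum. First I would record the structural consequences of the hypotheses $m_1 \ge c_1$ and $k - m_1 \le \tau < k - c_1$. Since $m_1 + \tau \ge k$, we have $p^k \mid m_0 t_0 p^{m_1+\tau}$, so $\e{-m_0 t_0 p^{m_1+\tau}/p^k} = 1$ for every $t_0 \in (\ZZ/p^{k-\tau}\ZZ)^*$. On the other hand $b_1 \le c_1$ and $\tau < k - c_1$ force each of $\tau$, $b_1 + \tau$, $c_1 + \tau$ to be strictly less than $k$, so each of $\gausssum{a t_0 p^\tau}{p^k}$, $\gausssum{b_0 t_0 p^{b_1+\tau}}{p^k}$, $\gausssum{c_0 t_0 p^{c_1+\tau}}{p^k}$ falls into the second case of Lemma~\ref{lem:gausssumppows}.

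Applying Lemma~\ref{lem:gausssumppows} and using $\kronsym{\cdot}{p^n} = \kronsym{\cdot}{p}^n$ to pull the $t_0$-dependence out of the Jacobi symbols, the product of Gauss sums becomes a constant in $t_0$ times $\kronsym{t_0}{p}^{3k - 3\tau - b_1 - c_1}$, so that
\begin{align*}
s_{k,\tau} &= p^{(3k + 3\tau + b_1 + c_1)/2}\,\kronsym{a}{p}^{k-\tau}\kronsym{b_0}{p}^{k-b_1-\tau}\kronsym{c_0}{p}^{k-c_1-\tau} \\
	&\qquad \times \varepsilon_{p^{k-\tau}}\varepsilon_{p^{k-b_1-\tau}}\varepsilon_{p^{k-c_1-\tau}}\sum_{t_0 \in (\ZZ/p^{k-\tau}\ZZ)^*}\kronsym{t_0}{p}^{3k-3\tau-b_1-c_1}.
\end{align*}
By Lemma~\ref{lem:sumLegendre} the inner sum is $p^{k-\tau}(1 - 1/p)$ when $3k - 3\tau - b_1 - c_1$ is even and $0$ otherwise; since $3k - 3\tau - b_1 - c_1 \equiv (k-\tau) + (b_1 + c_1) \pmod{2}$, this is exactly the dichotomy recorded in the lemma ($k-\tau$ even when $b_1 \equiv c_1 \pmod{2}$, $k - \tau$ odd when $b_1 \not\equiv c_1 \pmod{2}$), and $s_{k,\tau} = 0$ in all remaining cases.

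In the two surviving cases I would substitute $p^{k-\tau}(1-1/p)$ for the inner sum; the power of $p$ then collapses to $3k - (k-\tau)/2 + (b_1+c_1)/2$ when $b_1 \equiv c_1 \pmod 2$ and to $3k + (b_1 + c_1 - (k-\tau))/2$ when $b_1 \not\equiv c_1 \pmod 2$. It remains to simplify the product of the three $\kronsym{\cdot}{p}$-powers and the three $\varepsilon$-factors, for which I would split once more on whether $b_1$ and $c_1$ are individually even or odd, and use $\varepsilon_{p^n} = 1$ for even $n$ together with $\varepsilon_p^2 = \kronsym{-1}{p}$ (the remark following Lemma~\ref{lem:gausssumppows}) to absorb the $\varepsilon$'s into Jacobi symbols. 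In each subcase the product telescopes to the stated closed form: $\kronsym{-b_0 c_0}{p}^{b_1}$ when $b_1 \equiv c_1 \pmod 2$, and $\kronsym{-a}{p}\kronsym{b_0}{p}^{b_1+1}\kronsym{c_0}{p}^{b_1}$ when $b_1 \not\equiv c_1 \pmod 2$. The only real difficulty is the bookkeeping of parities and signs across these subcases; the argument is otherwise mechanical and parallels the proof of Lemma~\ref{lem:sktau m1>=c1 tau=k-m1-1}.
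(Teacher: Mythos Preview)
Your proposal is correct and follows essentially the same approach as the paper: apply Lemma~\ref{lem:gausssumppows} to all three Gauss sums (after observing the exponential factor is trivial), collect the $t_0$-dependence into a single Legendre-symbol power, evaluate the resulting character sum by Lemma~\ref{lem:sumLegendre}, and simplify the remaining $\varepsilon$-factors and Legendre symbols by parity. The only cosmetic difference is that the paper records the exponent on $\kronsym{t_0}{p}$ as $k-\tau-b_1-c_1$ rather than your $3k-3\tau-b_1-c_1$; since these differ by an even integer, the parity analysis is identical.
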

\begin{proof}
Since $m_1 \ge c_1$ and $k - m_1 \le \tau < k - c_1$, we have $\tau + b_1 \le \tau + c_1 < k \le m_1 + \tau$. Therefore, by Lemma~\ref{lem:gausssumppows},
\begin{align*}
s_{k,\tau} &= \sum_{t_0 \in {(\ZZ/p^{k-\tau}\ZZ)}^*}  p^{(k+\tau)/2} \kronsym{a t_0}{p^{k-\tau}} \varepsilon_{p^{k-\tau}} p^{(k + b_1 + \tau)/2} \kronsym{b_0 t_0}{p^{k - b_1 - \tau}} \varepsilon_{p^{k - b_1 - \tau}} \\
	&\qquad\qquad\qquad\qquad \cdot p^{(k + c_1 + \tau)/2} \kronsym{c_0 t_0}{p^{k - c_1 - \tau}} \varepsilon_{p^{k - c_1 - \tau}} \\
	&= p^{(3k + 3\tau + b_1 + c_1)/2} \varepsilon_{p^{k-\tau}} \varepsilon_{p^{k - b_1 - \tau}} \varepsilon_{p^{k - c_1 - \tau}} \kronsym{a}{p}^{k-\tau} \kronsym{b_0}{p}^{k - b_1 - \tau} \\
	&\qquad\qquad\qquad\qquad \cdot \kronsym{c_0}{p}^{k - c_1 - \tau} \sum_{t_0 \in {(\ZZ/p^{k-\tau}\ZZ)}^*}  \kronsym{t_0}{p}^{k-\tau - b_1 - c_1} .
\end{align*}

If $b_1 \equiv c_1 \pmod{2}$, then, by Lemma~\ref{lem:sumLegendre},
\begin{align*}
s_{k,\tau} &=
\begin{cases}
p^{(3k + 3\tau + b_1 + c_1)/2} \varepsilon_{p^{b_1}}^2 \kronsym{b_0}{p}^{b_1} \kronsym{c_0}{p}^{b_1} p^{k-\tau} \left( 1 - \dfrac{1}{p} \right) , &\text{if $k-\tau$ is even,} \\
0 , &\text{if $k-\tau$ is odd,} 
\end{cases} \\
	&=
\begin{cases}
p^{3k - (k - \tau)/2 + (b_1 + c_1)/2} \kronsym{- b_0 c_0}{p}^{b_1} \left( 1 - \dfrac{1}{p} \right) , &\text{if $k-\tau$ is even,} \\
0 , &\text{if $k-\tau$ is odd.} 
\end{cases}
\end{align*}

If $b_1 \not\equiv c_1 \pmod{2}$, then, by Lemma~\ref{lem:sumLegendre},
\begin{align*}
s_{k,\tau} &=
\begin{cases}
0 , &\text{if $k-\tau$ is even,} \\
p^{(3k + 3\tau + b_1 + c_1)/2} \varepsilon_{p}^2 \kronsym{a}{p} \kronsym{b_0}{p}^{b_1 + 1} \kronsym{c_0}{p}^{b_1} p^{k-\tau} \left( 1 - \dfrac{1}{p} \right) , &\text{if $k-\tau$ is odd,} 
\end{cases} \\
	&=
\begin{cases}
0 , &\text{if $k-\tau$ is even,} \\
p^{3k + (b_1 + c_1 - (k - \tau))/2} \kronsym{- a}{p} \kronsym{b_0}{p}^{b_1 + 1} \kronsym{c_0}{p}^{b_1} \left( 1 - \dfrac{1}{p} \right) , &\text{if $k-\tau$ is odd.} 
\end{cases}
\end{align*}
\end{proof}

We would like to compute the sum $\sum_{\tau = k - m_1}^{k - c_1 - 1}  s_{k,\tau}$ when $m_1 \ge c_1$. In order to do that, we make use of some geometric-like sums given in the next lemma. We use the notation $\lceil x \rceil$ for the least integer greater than or equal to $x$. The details of the proof are omitted.
\begin{lemma} \label{lem:sum 1/p alt pows}
Let $n_1$ and $n_2$ be integers such that $0 \le n_1 \le n_2 \le k$. Then
\begin{align*}
\sum_{\substack{\tau = k - n_2 \\ k - \tau \text{ is even}}}^{k - n_1} p^{-(k - \tau)/2} \left( 1 - \dfrac{1}{p} \right) = p^{- \lceil n_1 / 2 \rceil} \left( 1 - p^{-\lfloor n_2 / 2 \rfloor + \lceil n_1 / 2 \rceil - 1} \right)
\end{align*}
and
\begin{align*}
\sum_{\substack{\tau = k - n_2 \\ k - \tau \text{ is odd}}}^{k - n_1} p^{-(k - \tau)/2} \left( 1 - \dfrac{1}{p} \right) &= p^{1/2 - \lceil (n_1 + 1)/ 2 \rceil} \left( 1 - p^{- \lfloor (n_2 + 1) / 2 \rfloor + \lceil (n_1 + 1) / 2 \rceil - 1} \right) .
\end{align*}
\end{lemma}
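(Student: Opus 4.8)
The plan is to reduce both identities to a single finite geometric series via the substitution $\tau_1 = k - \tau$, exactly as in the proofs of Lemma~\ref{lem:sumLegendrepows} and Lemma~\ref{lem:sktausum tau-n1tok-1}. Under this substitution the constraint ``$k - \tau$ is even (resp.\ odd)'' becomes ``$\tau_1$ is even (resp.\ odd)'', the summand $p^{-(k-\tau)/2}(1 - 1/p)$ becomes $p^{-\tau_1/2}(1 - 1/p)$, and $\tau_1$ runs over the integers with $n_1 \le \tau_1 \le n_2$ of the prescribed parity. The single clean fact I will use to finish each case is the geometric-series evaluation $(1 - 1/p)\sum_{j=0}^{M} p^{-j} = 1 - p^{-(M+1)}$, which is the same simplification already used implicitly in Lemma~\ref{lem:sktausum tau-n1tok-1}.

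For the first identity I would write the even values in the range as $\tau_1 = 2s$; the constraints $n_1 \le 2s \le n_2$ are equivalent to $\lceil n_1/2 \rceil \le s \le \lfloor n_2/2 \rfloor$, so the sum becomes $(1 - 1/p)\sum_{s = \lceil n_1/2 \rceil}^{\lfloor n_2/2 \rfloor} p^{-s}$. Factoring out $p^{-\lceil n_1/2 \rceil}$ and applying the geometric-series identity with $M = \lfloor n_2/2 \rfloor - \lceil n_1/2 \rceil$ gives precisely $p^{-\lceil n_1/2 \rceil}\left(1 - p^{-\lfloor n_2/2\rfloor + \lceil n_1/2 \rceil - 1}\right)$. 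For the second identity I would instead write the odd values as $\tau_1 = 2s + 1$; then $n_1 \le 2s+1 \le n_2$ is equivalent to $\lceil (n_1+1)/2\rceil - 1 \le s \le \lfloor (n_2+1)/2\rfloor - 1$, and $p^{-\tau_1/2} = p^{-1/2}\,p^{-s}$. Pulling out the factor $p^{-1/2}\,p^{-(\lceil (n_1+1)/2\rceil - 1)} = p^{1/2 - \lceil (n_1+1)/2\rceil}$ and applying the same geometric-series step with $M' = \lfloor (n_2+1)/2\rfloor - \lceil (n_1+1)/2\rceil$ yields the stated right-hand side.

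The only point that requires any care, and presumably the reason the details are omitted in the paper, is the translation of the parity-restricted integer interval $[n_1, n_2]$ into exact summation limits for $s$ in terms of floors and ceilings, together with the degenerate case in which $[n_1,n_2]$ contains no integer of the required parity (which happens only when $n_1 = n_2$ has the opposite parity, forcing $M = -1$ or $M' = -1$). In that case the right-hand side automatically collapses to (a power of $p$) times $1 - p^{0} = 0$, matching the empty sum, so no separate argument is needed; everything else is routine.
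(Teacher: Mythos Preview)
Your proposal is correct and is exactly the computation the paper has in mind: the paper omits the proof entirely, describing the sums only as ``geometric-like'' and remarking that the details are routine. Your substitution $\tau_1 = k - \tau$, the parametrisation of the even/odd values by $s$, and the handling of the empty-sum boundary case via $M = -1$ all check out line by line.
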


Using previous lemmas, we now compute the sum $\sum_{\tau = k - m_1}^{k - c_1 - 1}  s_{k,\tau}$ when $m_1 \ge c_1$.% in the next lemma.
\begin{lemma} \label{lem:sktausum m1>=c1 tauk-m1tok-c1}
If $m_1 \ge c_1$, then 
\begin{align*}
\sum_{\tau = k - m_1}^{k - c_1 - 1}  s_{k,\tau} &=
\begin{cases}
p^{3k + (b_1 + c_1)/2 - \lceil (c_1 + 1)/2 \rceil} \kronsym{- b_0 c_0}{p}^{b_1} \left( 1 - p^{- \lfloor m_1 / 2\rfloor + \lceil (c_1 + 1)/2 \rceil - 1} \right) , \\
	\qquad\qquad\qquad\qquad\qquad\qquad\qquad\qquad\qquad\qquad\text{if $b_1 \equiv c_1 \pmod{2}$,} \\
p^{3k + (b_1 + c_1 - 1)/2 - \lceil c_1/2 \rceil} \kronsym{-a}{p} \kronsym{b_0}{p}^{b_1 + 1} \kronsym{c_0}{p}^{b_1} \smallskip \\
	\qquad\qquad\qquad\qquad\qquad\qquad\qquad\qquad \cdot \left( 1- p^{- \lfloor (m_1 + 1) / 2\rfloor + \lceil c_1 /2 \rceil} \right) , \\
	\qquad\qquad\qquad\qquad\qquad\qquad\qquad\qquad\qquad\qquad\text{if $b_1 \not\equiv c_1 \pmod{2}$.} 
\end{cases}
\end{align*}
\end{lemma}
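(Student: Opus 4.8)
The plan is to substitute the explicit value of $s_{k,\tau}$ from Lemma~\ref{lem:sktau m1>=c1 tauk-m1tok-c1} into the sum $\sum_{\tau = k - m_1}^{k - c_1 - 1} s_{k,\tau}$ and then reduce the resulting sum to the geometric-type identities of Lemma~\ref{lem:sum 1/p alt pows}. First I would dispose of the degenerate case $m_1 = c_1$: here the index set $\{ k - m_1, \dots, k - c_1 - 1 \}$ is empty, so the left-hand side is $0$, and one checks that the claimed right-hand side also vanishes, because the exponent $-\lfloor m_1/2 \rfloor + \lceil (c_1 + 1)/2 \rceil - 1$ (resp.\ $-\lfloor (m_1+1)/2 \rfloor + \lceil c_1/2 \rceil$) equals $0$ when $m_1 = c_1$, so the factor $1 - p^{0}$ is $0$.

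So assume $m_1 > c_1$, i.e.\ $m_1 \ge c_1 + 1$; since $k > m_1$ we then have $0 \le c_1 + 1 \le m_1 \le k$, so Lemma~\ref{lem:sum 1/p alt pows} applies with $n_1 = c_1 + 1$ and $n_2 = m_1$, and the summation range $\tau = k - m_1, \dots, k - c_1 - 1$ is precisely $\tau = k - n_2, \dots, k - n_1$. Now I would split according to the parity relation between $b_1$ and $c_1$, following Lemma~\ref{lem:sktau m1>=c1 tauk-m1tok-c1}. When $b_1 \equiv c_1 \pmod{2}$, only the terms with $k - \tau$ even contribute, and
\[
s_{k,\tau} = \Bigl( p^{3k + (b_1 + c_1)/2} \kronsym{- b_0 c_0}{p}^{b_1} \Bigr) \, p^{-(k-\tau)/2} \left( 1 - \frac{1}{p} \right);
\]
pulling out the $\tau$-independent factor and applying the first (even) identity of Lemma~\ref{lem:sum 1/p alt pows} yields the first case as stated, with no further simplification needed. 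When $b_1 \not\equiv c_1 \pmod{2}$, only the terms with $k - \tau$ odd contribute, and $s_{k,\tau}$ equals the $\tau$-independent factor $p^{3k + (b_1 + c_1)/2} \kronsym{- a}{p} \kronsym{b_0}{p}^{b_1 + 1} \kronsym{c_0}{p}^{b_1}$ times $p^{-(k-\tau)/2} (1 - 1/p)$; applying the second (odd) identity of Lemma~\ref{lem:sum 1/p alt pows} then gives the second case after one rewrites the resulting exponents, which involve $\lceil (c_1 + 2)/2 \rceil$, using $\lceil (c_1 + 2)/2 \rceil = \lceil c_1/2 \rceil + 1$. This turns the prefactor $p^{3k + (b_1 + c_1)/2} \cdot p^{1/2 - \lceil (c_1+2)/2 \rceil}$ into $p^{3k + (b_1 + c_1 - 1)/2 - \lceil c_1/2 \rceil}$ and the inner exponent $-\lfloor (m_1+1)/2 \rfloor + \lceil (c_1+2)/2 \rceil - 1$ into $-\lfloor (m_1+1)/2 \rfloor + \lceil c_1/2 \rceil$, matching the displayed formula.

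I do not expect any conceptual obstacle here: once Lemmas~\ref{lem:sktau m1>=c1 tauk-m1tok-c1} and \ref{lem:sum 1/p alt pows} are available, the argument is pure bookkeeping. The things to be careful about are aligning the summation limits with the indices $n_1 = c_1 + 1$, $n_2 = m_1$ of Lemma~\ref{lem:sum 1/p alt pows}, handling the empty-sum case $m_1 = c_1$ separately (since there $n_1 > n_2$ and the lemma does not literally apply), and keeping the nested floors and ceilings straight during the final simplification.
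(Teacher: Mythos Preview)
Your proposal is correct and follows essentially the same route as the paper: factor out the $\tau$-independent constant from Lemma~\ref{lem:sktau m1>=c1 tauk-m1tok-c1} and then apply the appropriate half of Lemma~\ref{lem:sum 1/p alt pows} with $n_1 = c_1 + 1$, $n_2 = m_1$. Your explicit treatment of the empty-sum case $m_1 = c_1$ and of the simplification $\lceil (c_1+2)/2 \rceil = \lceil c_1/2 \rceil + 1$ are welcome additions that the paper leaves implicit.
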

\begin{proof}
By Lemmas~\ref{lem:sktau m1>=c1 tauk-m1tok-c1} and \ref{lem:sum 1/p alt pows}, if $b_1 \equiv c_1 \pmod{2}$, then 
\begin{align*}
\sum_{\tau = k - m_1}^{k - c_1 - 1}  s_{k,\tau} &= p^{3k + (b_1 + c_1)/2} \kronsym{- b_0 c_0}{p}^{b_1} \sum_{\substack{\tau = k - m_1 \\ k - \tau \text{ is even}}}^{k - c_1 - 1}  p^{- (k - \tau)/2} \left( 1 - \dfrac{1}{p} \right) \\
	&= p^{3k + (b_1 + c_1)/2} \kronsym{- b_0 c_0}{p}^{b_1} p^{- \lceil (c_1 + 1)/2 \rceil} \left( 1 - p^{- \lfloor m_1 / 2\rfloor + \lceil (c_1 + 1)/2 \rceil - 1} \right) ,
\end{align*}
which simplifies to what is stated in Lemma~\ref{lem:sktausum m1>=c1 tauk-m1tok-c1} for $b_1 \equiv c_1 \pmod{2}$.

By Lemmas~\ref{lem:sktau m1>=c1 tauk-m1tok-c1} and \ref{lem:sum 1/p alt pows}, if $b_1 \not\equiv c_1 \pmod{2}$, then 
\begin{align*}
&\sum_{\tau = k - m_1}^{k - c_1 - 1}  s_{k,\tau} = p^{3k + (b_1 + c_1)/2} \kronsym{- a}{p} \kronsym{b_0}{p}^{b_1 + 1} \kronsym{c_0}{p}^{b_1} \sum_{\substack{\tau = k - m_1 \\ k - \tau \text{ is odd}}}^{k - c_1 - 1}  p^{- (k - \tau)/2} \left( 1 - \dfrac{1}{p} \right) \\
	&= p^{3k + (b_1 + c_1 + 1)/2} \kronsym{-a}{p} \kronsym{b_0}{p}^{b_1 + 1} \kronsym{c_0}{p}^{b_1} p^{- \lceil c_1/2 \rceil - 1}  \left( 1 - p^{- \lfloor (m_1 + 1) / 2\rfloor + \lceil c_1 /2 \rceil} \right) ,
\end{align*}
which simplifies to what is stated in Lemma~\ref{lem:sktausum m1>=c1 tauk-m1tok-c1} for $b_1 \not\equiv c_1 \pmod{2}$.
\end{proof}

We now compute $s_{k, \tau}$ when $m_1 \ge c_1$ and $k - c_1 \le \tau < k - b_1$.
\begin{lemma} \label{lem:sktau m1>=c1 tauk-c1tok-b1}
If $m_1 \ge c_1$ and $k - c_1 \le \tau < k - b_1$, then
\begin{align*}
s_{k, \tau} &= 
\begin{cases}
p^{3k + b_1/2} \kronsym{- a b_0}{p}^{k - \tau} \left( 1 - \dfrac{1}{p} \right) ,	&\text{if $b_1$ is even,} \smallskip\\
0 ,	&\text{if $b_1 $ is odd} 
\end{cases}
\end{align*}
\end{lemma}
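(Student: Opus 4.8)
The plan is to imitate the proof of Lemma~\ref{lem:sktau b1<=m1<c1 tauk-m1tok-b1} almost verbatim; the only structural change is that the Gauss sum attached to $c$ now collapses to $p^k$ because, in this range of $\tau$, the power of $p$ dividing $c_0 t_0 p^{c_1+\tau}$ is already at least $p^k$. First I would pin down the relevant inequalities. Since $k - c_1 \le \tau < k - b_1$ we have $b_1 + \tau \le k - 1 < k \le c_1 + \tau$, and since $m_1 \ge c_1$ we also get $m_1 + \tau \ge c_1 + \tau \ge k$, so $p^k \mid m_0 t_0 p^{m_1+\tau}$ and therefore $\e{-m_0 t_0 p^{m_1+\tau}/p^k} = 1$ for every $t_0 \in (\ZZ/p^{k-\tau}\ZZ)^*$. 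Also $\tau < k$.

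Next I would evaluate the three Gauss sums in the definition of $s_{k,\tau}$ via Lemma~\ref{lem:gausssumppows}. Because $p \nmid a t_0$ and $\tau < k$, $b_1 + \tau < k$, the sums $\gausssum{a t_0 p^\tau}{p^k}$ and $\gausssum{b_0 t_0 p^{b_1+\tau}}{p^k}$ fall into the ``$k > \ell$'' case and contribute $p^{(k+\tau)/2}\kronsym{a t_0}{p^{k-\tau}}\varepsilon_{p^{k-\tau}}$ and $p^{(k+b_1+\tau)/2}\kronsym{b_0 t_0}{p^{k-b_1-\tau}}\varepsilon_{p^{k-b_1-\tau}}$, respectively, while $c_1 + \tau \ge k$ puts $\gausssum{c_0 t_0 p^{c_1+\tau}}{p^k}$ into the ``$k \le \ell$'' case, giving simply $p^k$. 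Multiplying the powers of $p$ gives $p^{2k+\tau+b_1/2}$, and the remaining sum over $t_0$ is, after collapsing $\kronsym{\cdot}{p^{k-\tau}}$ and $\kronsym{\cdot}{p^{k-b_1-\tau}}$ to powers of the Legendre symbol, a multiple of $\sum_{t_0 \in (\ZZ/p^{k-\tau}\ZZ)^*} \kronsym{t_0}{p}^{-b_1}$.

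Finally I would split on the parity of $b_1$ and apply Lemma~\ref{lem:sumLegendre}. If $b_1$ is odd, the exponent $-b_1$ is odd, so the sum over $t_0$ vanishes and $s_{k,\tau} = 0$. If $b_1$ is even, then $k - b_1 - \tau \equiv k - \tau \pmod 2$, so $\varepsilon_{p^{k-\tau}}\varepsilon_{p^{k-b_1-\tau}} = \varepsilon_{p^{k-\tau}}^2 = \kronsym{-1}{p}^{k-\tau}$ and the two Jacobi symbols combine to $\kronsym{a b_0}{p}^{k-\tau}$, while $\sum_{t_0}\kronsym{t_0}{p}^{-b_1} = \sum_{t_0} 1 = p^{k-\tau}(1 - 1/p)$; assembling these pieces yields $s_{k,\tau} = p^{3k+b_1/2}\kronsym{-a b_0}{p}^{k-\tau}(1 - 1/p)$. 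There is essentially no obstacle: the argument is a near-copy of Lemma~\ref{lem:sktau b1<=m1<c1 tauk-m1tok-b1}, and the only point needing a bit of care is the bookkeeping of the half-integer powers of $p$ and the $\varepsilon$-factors in the even-$b_1$ case, to confirm the final exponent is exactly $3k + b_1/2$.
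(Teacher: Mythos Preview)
Your proposal is correct and follows precisely the paper's approach: the paper establishes the inequalities $b_1 + \tau < k \le c_1 + \tau \le m_1 + \tau$, applies Lemma~\ref{lem:gausssumppows} to reduce $s_{k,\tau}$ to the same expression you obtain, and then simply says ``by the same methods used in the proof of Lemma~\ref{lem:sktau b1<=m1<c1 tauk-m1tok-b1}'' --- you have spelled out those methods in full.
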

\begin{proof}
Since $m_1 \ge c_1$ and $k - c_1 \le \tau < k - b_1$, we have $b_1 + \tau < k \le c_1 + \tau \le m_1 + \tau$. Therefore, by Lemma~\ref{lem:gausssumppows},
\begin{align*}
s_{k,\tau} &=\sum_{t_0 \in {(\ZZ/p^{k-\tau}\ZZ)}^*}  p^{(k+\tau)/2} \kronsym{a t_0}{p^{k-\tau}} \varepsilon_{p^{k-\tau}} p^{(k + b_1 + \tau)/2} \kronsym{b_0 t_0}{p^{k - b_1 - \tau}} \varepsilon_{p^{k - b_1 - \tau}} p^k .
\end{align*}
By the same methods used in the proof of Lemma~\ref{lem:sktau b1<=m1<c1 tauk-m1tok-b1}, we have the result of this lemma.
\end{proof}

The above lemma together with Lemma~\ref{lem:sumLegendrepows} now immediately gives the following.
\begin{lemma} \label{lem:sktausum m1>=c1 tauk-c1tok-b1}
If $m_1 \ge c_1$, then
\begin{align*}
\sum_{\tau = k - c_1}^{k - b_1 - 1} s_{k, \tau} &= 
\begin{cases}
p^{3k + b_1/2}\left( 1 - \dfrac{1}{p} \right) \left( \dfrac{c_1 - b_1}{2} + \dfrac{(-1)^{c_1} -1}{4} \right.\\
	\qquad\qquad\qquad\qquad\left.+ \kronsym{- a b_0}{p} \left( \dfrac{c_1 - b_1}{2} + \dfrac{1 - (-1)^{c_1}}{4} \right) \right) , \\
	\quad\qquad\text{if $b_1$ is even,} \smallskip\\
0 ,	\qquad\text{if $b_1 $ is odd.} 
\end{cases}
\end{align*}
\end{lemma}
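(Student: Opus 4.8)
The plan is to read off the statement directly as a corollary of Lemma~\ref{lem:sktau m1>=c1 tauk-c1tok-b1} and Lemma~\ref{lem:sumLegendrepows}, exactly as the sentence preceding the statement advertises. First I would dispose of the case $b_1$ odd: Lemma~\ref{lem:sktau m1>=c1 tauk-c1tok-b1} says $s_{k,\tau}=0$ for every $\tau$ with $k-c_1\le\tau<k-b_1$, so $\sum_{\tau=k-c_1}^{k-b_1-1}s_{k,\tau}$ is a sum of zeros and equals $0$, matching the claimed formula.

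For $b_1$ even, I would substitute the value $s_{k,\tau}=p^{3k+b_1/2}\kronsym{-ab_0}{p}^{k-\tau}\left(1-\frac{1}{p}\right)$ supplied by Lemma~\ref{lem:sktau m1>=c1 tauk-c1tok-b1}, pull the factor $p^{3k+b_1/2}\left(1-\frac{1}{p}\right)$ out of the sum, and be left with $\sum_{\tau=k-c_1}^{k-b_1-1}\kronsym{-ab_0}{p}^{k-\tau}$. If $b_1=c_1$ this sum is empty, hence $0$; this is consistent with the stated formula, since when $c_1=b_1$ is even the parenthesized expression $\frac{c_1-b_1}{2}+\frac{(-1)^{c_1}-1}{4}+\kronsym{-ab_0}{p}\left(\frac{c_1-b_1}{2}+\frac{1-(-1)^{c_1}}{4}\right)$ vanishes. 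Otherwise $b_1+1\le c_1$, and I would apply Lemma~\ref{lem:sumLegendrepows} with $r=-ab_0$ (which is coprime to $p$ because $p\nmid a$ and $p\nmid b_0$), $n_1=b_1+1$, and $n_2=c_1$, noting that $0\le b_1+1\le c_1\le k$, so that the summation limits $k-n_2=k-c_1$ and $k-n_1=k-b_1-1$ match. This yields
\begin{align*}
\sum_{\tau=k-c_1}^{k-b_1-1}\kronsym{-ab_0}{p}^{k-\tau}
&= \frac{c_1-b_1}{2}+\frac{(-1)^{b_1+1}+(-1)^{c_1}}{4}
 + \kronsym{-ab_0}{p}\left(\frac{c_1-b_1}{2}-\frac{(-1)^{b_1+1}+(-1)^{c_1}}{4}\right).
\end{align*}

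To finish, I would use that $b_1$ is even, so $(-1)^{b_1+1}=-1$ and hence $(-1)^{b_1+1}+(-1)^{c_1}=(-1)^{c_1}-1$; substituting this into the last display and multiplying back through by $p^{3k+b_1/2}\left(1-\frac{1}{p}\right)$ produces precisely the asserted formula for $b_1$ even. The proof has no conceptual obstacle; the only points demanding care are the index bookkeeping in invoking Lemma~\ref{lem:sumLegendrepows} (matching $n_1,n_2$ to the summation range and verifying its hypotheses), the degenerate empty-sum case $b_1=c_1$, and the parity simplification of $(-1)^{b_1+1}$ that converts the generic expression from Lemma~\ref{lem:sumLegendrepows} into the form stated here.
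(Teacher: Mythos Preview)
Your proposal is correct and follows exactly the route the paper indicates: the paper's proof consists solely of the remark that the result follows immediately from Lemma~\ref{lem:sktau m1>=c1 tauk-c1tok-b1} together with Lemma~\ref{lem:sumLegendrepows}, and your argument supplies precisely those details (including the parity simplification $(-1)^{b_1+1}=-1$ and the degenerate case $b_1=c_1$).
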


Using Lemmas~\ref{lem:sktau tau0tok-m1-2}, \ref{lem:sktausum tau-n1tok-1}, \ref{lem:sktau m1>=c1 tau=k-m1-1}, \ref{lem:sktausum m1>=c1 tauk-m1tok-c1}, and \ref{lem:sktausum m1>=c1 tauk-c1tok-b1}, we now compute $r_{p^k}(m, Q)$ when $m_1 \ge c_1$. By using the Lemmas~\ref{lem:sktau tau0tok-m1-2} and \ref{lem:sktausum tau-n1tok-1} and \eqref{eq:sum sktau}, we see that
\begin{align*}
r_{p^k}(m, Q) &=  p^{2k} + \frac{1}{p^k} s_{k, k - m_1 - 1} + \frac{1}{p^k} \sum_{\tau=k - m_1}^{k - c_1 - 1} s_{k,\tau} + \frac{1}{p^k} \sum_{\tau=k - c_1}^{k - b_1 - 1} s_{k,\tau} + \frac{1}{p^k} \sum_{\tau=k - b_1}^{k - 1} s_{k,\tau} \\
	&=  p^{2k} + \frac{1}{p^k} s_{k, k - m_1 - 1} + \frac{1}{p^k} \sum_{\tau=k - m_1}^{k - c_1 - 1} s_{k,\tau} + \frac{1}{p^k} \sum_{\tau=k - c_1}^{k - b_1 - 1} s_{k,\tau} + p^{2k} (p^{\lfloor b_1 / 2 \rfloor} - 1) \\
	&=  \frac{1}{p^k} s_{k, k - m_1 - 1} + \frac{1}{p^k} \sum_{\tau=k - m_1}^{k - c_1 - 1} s_{k,\tau} + \frac{1}{p^k} \sum_{\tau=k - c_1}^{k - b_1 - 1} s_{k,\tau} + p^{2k + \lfloor b_1 / 2 \rfloor}.
\end{align*}
Substituting the identities from Lemmas~\ref{lem:sktau m1>=c1 tau=k-m1-1}, \ref{lem:sktausum m1>=c1 tauk-m1tok-c1}, and \ref{lem:sktausum m1>=c1 tauk-c1tok-b1} and simplifying, we deduce that
\begin{align*}
r_{p^k}(m, Q) &= 
\begin{cases}
p^{2k + b_1 / 2} \left( 1 + \dfrac{1}{p} + p^{- m_1/2 + c_1/2 - 1} \left( \kronsym{- a b_0 c_0 m_0}{p} - 1 \right) \right. \smallskip\\
	\qquad\qquad\qquad\left.+ \left( 1 - \dfrac{1}{p} \right) \left( \dfrac{c_1 - b_1}{2} + \kronsym{- a b_0}{p} \dfrac{c_1 - b_1}{2} \right) \right) , \\
	\qquad\qquad\qquad\qquad\qquad\qquad\qquad\qquad\text{if $c_1$ and $m_1$ are even,} \\
p^{2k + b_1 / 2} \left( \left( 1 + \dfrac{1}{p} \right) \left( 1 - p^{- (m_1 + 1)/2 + c_1/2}\right) \right. \smallskip\\
	\qquad\qquad\qquad\left.+ \left( 1 - \dfrac{1}{p} \right) \left( \dfrac{c_1 - b_1}{2} + \kronsym{- a b_0}{p} \dfrac{c_1 - b_1}{2} \right) \right), \\
	\qquad\qquad\qquad\qquad\qquad\qquad\qquad\qquad\text{if $c_1$ is even and $m_1$ is odd,} \\
p^{2k + b_1 / 2} \left( 1 - p^{- m_1/2 + (c_1 - 1)/2} \kronsym{- a b_0}{p} \left( 1 + \dfrac{1}{p} \right) + \dfrac{1}{p} \kronsym{-a b_0}{p} \right. \smallskip\\
	\qquad\qquad\qquad\left.+ \left( 1 - \dfrac{1}{p} \right) \left( \dfrac{c_1 - b_1 - 1}{2} + \kronsym{- a b_0}{p} \dfrac{c_1 - b_1 + 1}{2} \right) \right), \\
	\qquad\qquad\qquad\qquad\qquad\qquad\qquad\qquad\text{if $c_1$ is odd and $m_1$ is even,} \\
p^{2k + b_1 / 2} \left( 1 + p^{- (m_1 + 1)/2 + (c_1 - 1)/2} \left( \kronsym{c_0 m_0}{p} - \kronsym{-a b_0}{p} \right)  \right.\smallskip\\
	\quad\left.+ \dfrac{1}{p} \kronsym{-a b_0}{p} + \left( 1 - \dfrac{1}{p} \right) \left( \dfrac{c_1 - b_1 - 1}{2} + \kronsym{- a b_0}{p} \dfrac{c_1 - b_1 + 1}{2} \right) \right), \\
	\qquad\qquad\qquad\qquad\qquad\qquad\qquad\qquad\text{if $c_1$ and $m_1$ are odd,}
\end{cases}
\end{align*}
if $b_1$ is even and 
\begin{align*}
r_{p^k}(m, Q) &= 
\begin{cases}
p^{2k + (b_1 - 1)/ 2} \left( 1 + \kronsym{-a c_0}{p} - p^{- m_1/2 + c_1/2} \left( 1 + \dfrac{1}{p} \right) \kronsym{- a c_0}{p} \right) , \\
	\qquad\qquad\qquad\qquad\qquad\qquad\qquad\qquad\text{if $c_1$ and $m_1$ are even,} \\
 p^{2k + (b_1 - 1)/ 2} \left( 1 + \kronsym{-a c_0}{p} +  p^{- (m_1 + 1)/2 + c_1/2} \left( \kronsym{b_0 m_0}{p} - \kronsym{-a c_0}{p} \right) \right), \\
	\qquad\qquad\qquad\qquad\qquad\qquad\qquad\qquad\text{if $c_1$ is even and $m_1$ is odd,} \\
p^{2k + (b_1 - 1)/ 2} \left( 1 + \kronsym{- b_0 c_0}{p}  + p^{- m_1/2 + (c_1 - 1)/2} \left( \kronsym{a m_0}{p} - \kronsym{- b_0 c_0}{p} \right)  \right) , \\
	\qquad\qquad\qquad\qquad\qquad\qquad\qquad\qquad\text{if $c_1$ is odd and $m_1$ is even,} \\
p^{2k + (b_1 - 1)/ 2} \left( 1 + \kronsym{- b_0 c_0}{p} - p^{(-m_1 +  c_1)/2} \left( 1 + \dfrac{1}{p} \right) \kronsym{- b_0 c_0}{p} \right) , \\
	\qquad\qquad\qquad\qquad\qquad\qquad\qquad\qquad\text{if $c_1$ and $m_1$ are odd,}
\end{cases}
\end{align*}
if $b_1$ is odd.
By dividing by $p^{2k}$ and computing the limit as $k \to \infty$, we obtain \eqref{eq:density m1>=c1, b1 even} and \eqref{eq:density m1>=c1, b1 odd} if $m_1 \ge c_1$.

\subsection{Proof of Formulas for $\alpha_p (0, Q)$} % m=0
We now compute $\alpha_p (0, Q)$. Let $k$ be a positive integer. Towards computing $\alpha_p (0, Q)$, we compute $r_{p^k}(0, Q)$ and then take the appropriate limit. By \eqref{eq:rpkQm 0+other},
\begin{align*}
r_{p^k}(0 , Q) &=  p^{2k} + \frac{1}{p^k} \sum_{t=1}^{p^k - 1} \gausssum{a t}{p^k} \gausssum{b_0 p^{b_1} t}{p^k} \gausssum{c_0 p^{c_1} t}{p^k} .
\end{align*}
Let $t = t_0 p^\tau$, where $0 \le \tau \le k-1$ and $t_0 \in {(\ZZ/p^{k-\tau}\ZZ)}^*$. Then
\begin{align*}
r_{p^k}(0 , Q) &=  p^{2k} + \frac{1}{p^k} \sum_{\tau=0}^{k-1} s_{k,\tau} \\
	&=  p^{2k} + \frac{1}{p^k} \sum_{\tau=0}^{k - c_1 -1} s_{k,\tau} + \frac{1}{p^k} \sum_{\tau = k - c_1}^{k - b_1 - 1} s_{k,\tau} + \frac{1}{p^k} \sum_{\tau = k - b_1}^{k-1} s_{k,\tau} , \numberthis\label{eq:sktausum0}
\end{align*}
where
\begin{align*}
s_{k,\tau} &= \sum_{t_0 \in {(\ZZ/p^{k-\tau}\ZZ)}^*} \gausssum{a t_0 p^\tau}{p^k} \gausssum{b_0 t_0 p^{b_1+\tau}}{p^k} \gausssum{c_0 t_0 p^{c_1+\tau}}{p^k} .
\end{align*}

Doing calculations similar to those in Lemmas~\ref{lem:sktau taumintok-1} and \ref{lem:sktausum tau-n1tok-1}, we notice that the result of Lemma~\ref{lem:sktausum tau-n1tok-1} is applicable to the sum $\sum_{\tau = k - b_1}^{k-1} s_{k,\tau}$ in \eqref{eq:sktausum0} if we take $b_1 = \min(m_1, b_1)$. Therefore,
\begin{align*}
r_{p^k}(0 , Q) &=  p^{2k} + \frac{1}{p^k} \sum_{\tau=0}^{k - c_1 -1} s_{k,\tau} + \frac{1}{p^k} \sum_{\tau = k - c_1}^{k - b_1 - 1} s_{k,\tau} + p^{2k}  ( p^{\lfloor b_1 / 2 \rfloor} -1) \\
	&=  \frac{1}{p^k} \sum_{\tau=0}^{k - c_1 -1} s_{k,\tau} + \frac{1}{p^k} \sum_{\tau = k - c_1}^{k - b_1 - 1} s_{k,\tau} + p^{2k + \lfloor b_1 / 2 \rfloor} .
\end{align*}
A quick check shows that the result of Lemma~\ref{lem:sktausum m1>=c1 tauk-c1tok-b1} is applicable to the sum $\sum_{\tau = k - c_1}^{k - b_1 - 1} s_{k,\tau}$ in \eqref{eq:sktausum0}, so
\begin{align*}
r_{p^k}(0, Q) &= 
\begin{cases}
\frac{1}{p^k} \sum_{\tau=0}^{k - c_1 -1} s_{k,\tau} + p^{2k + b_1 / 2} \smallskip\\
	\qquad+ p^{2k + b_1/2}\left( 1 - \dfrac{1}{p} \right) \left( \dfrac{c_1 - b_1}{2} + \dfrac{(-1)^{c_1} -1}{4} \right.\smallskip\\
	\qquad\qquad\qquad\qquad\qquad\left.+ \kronsym{- a b_0}{p} \left( \dfrac{c_1 - b_1}{2} + \dfrac{1 - (-1)^{c_1}}{4} \right) \right) , \\
	\qquad\qquad\qquad\qquad\qquad\qquad\qquad\qquad\text{if $b_1$ is even,} \\
\frac{1}{p^k} \sum_{\tau=0}^{k - c_1 -1} s_{k,\tau} + p^{2k + (b_1 - 1)/ 2} , \qquad\quad\text{if $b_1$ is odd.} 
\end{cases}
\end{align*}
Doing calculations similar to those in Lemmas~\ref{lem:sktau m1>=c1 tauk-m1tok-c1} and \ref{lem:sktausum m1>=c1 tauk-m1tok-c1}, we see that Lemma~\ref{lem:sktausum m1>=c1 tauk-m1tok-c1} is applicable to the sum $\sum_{\tau=0}^{k - c_1 -1} s_{k,\tau}$ in \eqref{eq:sktausum0} with $m_1 = k$. Thus, 
\begin{align*}
r_{p^k}(0, Q) &= 
\begin{cases}
p^{2k + b_1/2 - 1} \left( 1 - p^{-\lfloor k / 2\rfloor + c_1/2}\right) + p^{2k + b_1 / 2} \smallskip\\
	\qquad+ p^{2k + b_1/2}\left( 1 - \dfrac{1}{p} \right) \left( \dfrac{c_1 - b_1}{2} + \kronsym{- a b_0}{p} \dfrac{c_1 - b_1}{2} \right) , \\
	\qquad\qquad\qquad\qquad\qquad\qquad\qquad\qquad\text{if $b_1$ and $c_1$ are even,} \\
p^{2k + b_1/2 - 1} \kronsym{-a b_0}{p} \left( 1 - p^{- \lfloor (k + 1) / 2\rfloor + (c_1 + 1) /2} \right) + p^{2k + b_1 / 2} \smallskip\\
	\qquad+ p^{2k + b_1/2}\left( 1 - \dfrac{1}{p} \right) \left( \dfrac{c_1 - b_1 - 1}{2} + \kronsym{- a b_0}{p} \dfrac{c_1 - b_1 + 1}{2} \right) , \\
	\qquad\qquad\qquad\qquad\qquad\qquad\qquad\qquad\text{if $b_1$ is even and $c_1$ is odd,} \\
p^{2k + (b_1 - 1)/2} \kronsym{-a c_0}{p} \left( 1 - p^{- \lfloor (k + 1) / 2\rfloor + c_1/2 } \right) + p^{2k + (b_1 - 1)/ 2} , \\
	\qquad\qquad\qquad\qquad\qquad\qquad\qquad\qquad\text{if $b_1$ is odd and $c_1$ is even,} \\
p^{2k + (b_1 - 1)/2} \kronsym{- b_0 c_0}{p} \left( 1 - p^{- \lfloor k / 2\rfloor + (c_1 + 1)/2 - 1} \right) + p^{2k + (b_1 - 1)/ 2} , \\
	\qquad\qquad\qquad\qquad\qquad\qquad\qquad\qquad\text{if $b_1$ and $c_1$ are odd.} 
\end{cases}
\end{align*}
By dividing by $p^{2k}$ and taking the limit as $k \to \infty$, we obtain \eqref{eq:density0}.

\section*{Acknowledgments}
I thank Matthew Young and the National Science Foundation Research Experience for Undergraduates at Texas A\&M (funded under NSF DMS-1156589) for introducing me to local densities of quadratic forms. I thank Kenneth Williams for asking me to look at local densities of quadratic forms again. I also thank John Rickert, William Duke, Ranier Schulze-Pillot, Alex Kontorovich, Jonathan Hanke, and Gene Kopp for their helpful discussions. I thank Meinhard Peters for reading my paper and bringing Lomadze's work to my attention. I thank Kenneth Williams, Gene Kopp, Matthew Young, and Bruce Berndt for reading my paper and making great suggestions that improved the article. I especially thank the anonymous referee for thoroughly reading my manuscript and making excellent suggestions that improved the article. 

Much of this material is based upon work supported by the National Science Foundation Graduate Research Fellowship under NSF DGE-1842213.

%\bibliographystyle{plain}
%\bibliography{LocalRepsTernaryQuadraticFormsBib}

\end{document}